 \newcommand{\esssup}[1]{\underset{#1}{\operatorname{ess\,sup} \,}}
 \def\AC{\operatorname{AC}}
 \def\Linf{\operatorname{L}_\infty}
 \def\C{\operatorname{C}}
 \def\rd{\mathrm{d}}
 \def\dist{\operatorname{dist}}
 \def\diam{\operatorname{diam}}
 \def\A2{\boldsymbol{A}}
 \def\L2{\boldsymbol{\operatorname{L}}_2}
 \def\f2{\boldsymbol{f}}
 \def\g2{\boldsymbol{g}}
\newtheorem{theorem}{Theorem}[section]
\newtheorem{corollary}[theorem]{Corollary}
\newtheorem{lemma}[theorem]{Lemma}
\newtheorem{proposition}[theorem]{Proposition}
\theoremstyle{definition}
\newtheorem{remark}[theorem]{Remark}
\title[Value functional and optimal feedback control \dots]
{Value functional and optimal feedback control in linear-quadratic optimal control problem for fractional-order system}
\author[Mikhail I. Gomoyunov]{}
\subjclass{Primary: 49N10, 34A08; Secondary: 49L12, 49N35.}
\keywords{Linear-quadratic optimal control problem, fractional differential equation, value functional, optimal feedback control, Hamilton--Jacobi--Bellman equation, fractional coinvariant derivatives, Fredholm integral equation.}
\thanks{This work is supported by RSF grant 19-11-00105, https://rscf.ru/en/project/19-11-00105/}
\begin{document}
\maketitle

\centerline{\scshape Mikhail I. Gomoyunov$^{{\href{mailto:m.i.gomoyunov@gmail.com}{\textrm{\Letter}}}1,2}$}

\medskip

{\footnotesize
 \centerline{$^1$Krasovskii Institute of Mathematics and Mechanics,}

 \centerline{Ural Branch of Russian Academy of Sciences, Russia}
}

\medskip

{\footnotesize
 \centerline{$^2$Ural Federal University, Russia}
}

\bigskip

 \centerline{(Communicated by Handling Editor)}

\begin{abstract}
    In this paper, a finite-horizon optimal control problem involving a dynamical system described by a linear Caputo fractional differential equation and a quadratic cost functional is considered.
    An explicit formula for the value functional is given, which includes a solution of a certain Fredholm integral equation.
    A step-by-step feedback control procedure for constructing $\varepsilon$-optimal controls with any accuracy $\varepsilon > 0$ is proposed.
    The basis for obtaining these results is the study of a solution of the associated Hamilton--Jacobi--Bellman equation with so-called fractional coinvariant derivatives.
\end{abstract}

\section{Introduction}

    Linear-quadratic optimal control problems constitute an important class of problems investigated in optimal control theory.
    They have applications in various fields of knowledge, and, at the same time, they are relatively easy to solve.
    In addition, these problems are often considered as test examples illustrating effectiveness of basic methods of optimal control theory, such as methods originating in the classical calculus of variations, the Pontryagin maximum principle, as well as the Bellman dynamic programming principle, including its infinitesimal version in the form of the Hamilton--Jacobi--Bellman (HJB for short) equation.
    Some details about linear-quadratic optimal control problems can be found in, e.g., \cite{Lee_Markus_1967,Bryson_Ho_1975,Bressan_Piccoli_2007,Yong_2015}.

    This paper is devoted to the development of the theory of linear-quadratic optimal control problems for dynamical systems described by Caputo fractional differential equations.
    Note that a distinctive feature of fractional-order systems is that they possess an inherent memory effect, which makes such systems infinite-dimensional and, in some sense, close to time-delay systems.
    Moreover, the Volterra integral equation corresponding to a Caputo fractional differential equation turns out to be weakly-singular, which introduces certain difficulties in analysis and requires careful attention.
    For the basics of fractional calculus and the theory of fractional differential equations, the reader is referred to, e.g., \cite{Samko_Kilbas_Marichev_1993,Kilbas_Srivastava_Trujillo_2006,Diethelm_2010}.

    More precisely, this paper deals with a finite-horizon optimal control problem involving a dynamical system described by a linear Caputo fractional differential equation of order $\alpha \in (1 / 2, 1)$ and a quadratic cost functional to be minimized.
    In what follows, this problem is referred to as the fractional linear quadratic optimal control problem (FLQOCP for short).
    Such problems were previously considered in, e.g., \cite{Li_Chen_2008,Liang_Wang_Wang_2014,Idczak_Walczak_2016,Zhou_Speyer_2019}.
    Furthermore, some numerical solution methods were proposed in, e.g., \cite{Agrawal_2008,Bhrawy_Doha_Machado_Ezz-Eldien_2015,Baghani_2019,Dabiri_Chahrogh_Machado_2021} (see also the references therein).
    Of particular note is the recent paper \cite{Han_Lin_Yong_2021}, which serves as a motivation for the present study and constitutes its basis.
    In that paper, the FLQOCP (in fact, a more general optimal control problem) was examined both by the variational methods and by the Pontryagin maximum principle.
    In addition, the important question of constructing an optimal feedback control was raised and analyzed.
    As a result, a so-called causal state feedback representation for the open-loop optimal control was obtained, which assumes that a current value of the optimal control is determined from information about the current time, about the history of the system motion up to this time, and also about some auxiliary trajectory.
    However, to the best of our knowledge, there have been no studies devoted to the development of the Bellman dynamic programming principle approach and the application of technique of the corresponding HJB equations for solving the FLQOCP, which is directly related to the construction of optimal feedback controls.
    The present paper aims to contribute in this area.

    Following \cite{Gomoyunov_2020_SIAM}, we introduce a notion of the value functional of the FLQOCP.
    Note that this functional is defined on a space of so-called system positions, which are pairs consisting of a current time and a function treated as a history of the system motion up to this time.
    Then, we associate the FLQOCP to a Cauchy problem for the corresponding HJB equation with so-called fractional coinvariant ($ci$- for short) derivatives of order $\alpha$ and the natural right-end boundary condition.
    Based on the results of \cite{Han_Lin_Yong_2021}, we present a quadratic functional serving as a candidate solution of this Cauchy problem.
    We prove that this functional is $ci$-smooth of order $\alpha$ (i.e., it is continuous and has continuous $ci$-derivatives of order $\alpha$) and check that it does satisfy both the associated HJB equation and the boundary condition.
    This allows us to conclude that this functional is indeed the value functional and propose a step-by-step feedback control procedure for constructing $\varepsilon$-optimal controls with any predetermined accuracy $\varepsilon > 0$.
    Thus, the main contribution of this paper is that an explicit formula for the value functional of the FLQOCP is given, continuity and smoothness properties of this functional are studied, and practically realizable feedback control procedures generating $\varepsilon$-optimal controls are obtained.

    Before proceeding with the main body of the paper, we make some observations.

    (i)
        Since we want to apply the technique of fractional $ci$-differentiability of order $\alpha$ from \cite{Gomoyunov_2020_SIAM}, we must confine ourselves to (Lebesgue) measurable and essentially bounded controls.
        That is why we formulate the FLQOCP in this class of admissible open-loop controls from the very beginning, which differs from the standard formulation of linear-quadratic optimal control problems in the class of square integrable open-loop controls.
        In the classical case of dynamical systems described by ordinary differential equations, these two formulations are known to be essentially the same, but they differ in the case of fractional-order systems.
        In particular, although it can be verified that the optimal results in these two formulations coincide, it can happen that an optimal square integrable open-loop control becomes unbounded in a neighbourhood of the terminal time, and, consequently, an optimal essentially bounded open-loop control does not exist.
        This fact is also confirmed by the presented in this paper formula for the $ci$-gradient of order $\alpha$ of the value functional, which shows that, in general, this gradient has a singularity in the time variable at the terminal time.
        Note also that this fact does not allow us to directly use the main results of \cite{Gomoyunov_2020_SIAM} in this paper, because they were obtained under compact geometric constraints on control.

    (ii)
        In accordance with item (i), this paper deals with $\varepsilon$-optimal controls only.
        To generate the desired controls, we follow \cite{Gomoyunov_2020_SIAM} and suggest to use time-discrete recursive feedback control procedures, which go back to the theory of positional differential games \cite{Krasovskii_Subbotin_1988,Krasovskii_Krasovskii_1995}.
        This approach is reasonable from a practical viewpoint and ensures the stability of the control procedures with respect to computational and measurement errors.
        Moreover, this allows us to avoid the questions of existence and uniqueness of a solution of the Caputo fractional differential equation corresponding to a closed-loop system.

    (iii)
        The most difficult part of the paper is to prove that the proposed candidate solution of the associated Cauchy problem is $ci$-smooth of order $\alpha$.
        To this end, we suggest to elaborate a special technique for $ci$-differentiating of order $\alpha$ of quadratic functionals of a sufficiently general form.
        It seems that this technique is of independent interest and may find further application, for example, in the study of linear-quadratic differential games for fractional-order systems.

    (iv)
        The obtained formula for the value functional of the FLQOCP as well as the formulas used for the construction of $\varepsilon$-optimal controls include a solution of a certain Fredholm integral equation \cite[Section 5]{Han_Lin_Yong_2021}, which can be considered as an analog of the Ricatti equation (in this connection see, e.g., \cite[Section 6.7]{Yong_2015}).
        Note that, for the results of the present paper, a more detailed analysis of the properties of the solution of this Fredholm integral equation in comparison with \cite{Han_Lin_Yong_2021} is required.
        In particular, we establish some continuity and differentiability properties of this solution.

    The paper is organized as follows.
    After introducing some notation in Section \ref{section_notation}, we formulate the FLQOCP in Section \ref{section_FLQOCP}.
    In Section \ref{section_HJB}, we consider the associated Cauchy problem for the HJB equation and the right-end boundary condition and give an explicit formula for its solution.
    In Section \ref{section_OFC}, we prove that this solution is indeed the value functional and describe the feedback control procedure for constructing $\varepsilon$-optimal controls.
    Section \ref{section_conclusion} contains some conclusions.
    For the reader's convenience, some of the proofs are relegated to Appendices \ref{appendix}, \ref{appendix_2}, and \ref{appendix_3}.

\section{Notation}
\label{section_notation}
    The basic notation used in the paper is standard.
    For any $n$, $m \in \mathbb{N}$, let $\mathbb{R}^n$ and $\mathbb{R}^{n \times m}$ be the linear spaces of $n$-dimensional vectors and $(n \times m)$-matrices, respectively.
    We denote by $\langle \cdot, \cdot \rangle_{\mathbb{R}^n}$, $\|\cdot\|_{\mathbb{R}^n}$, and $\|\cdot\|_{\mathbb{R}^{n \times m}}$ the inner product in $\mathbb{R}^n$, the Euclidean norm in $\mathbb{R}^n$, and the corresponding (operator) norm in $\mathbb{R}^{n \times m}$, respectively.
    For a matrix $A \in \mathbb{R}^{n \times m}$, its transpose is denoted by $A^\top \in \mathbb{R}^{m \times n}$.
    If a matrix $A \in \mathbb{R}^{n \times n}$ is invertible, its inverse is denoted by $A^{- 1} \in \mathbb{R}^{n \times n}$.
    We denote by $\mathbb{O}_n \in \mathbb{R}^n$ and $\mathbb{O}_{n \times n}$, $\mathbb{I}_{n \times n} \in \mathbb{R}^{n \times n}$ the null vector, the null matrix, and the identity matrix, respectively.
    For any $a$, $b \in \mathbb{R}$, we put $a \vee b \doteq \max\{a, b\}$ and $a \wedge b \doteq \min\{a, b\}$.
    By a modulus of continuity, we mean a continuous and nondecreasing function $\omega \colon [0, \infty) \to [0, \infty)$ such that $\omega(0) = 0$.
    A modulus of continuity $\omega^\ast(\cdot)$ is called infinitesimal if $\omega^\ast(\delta) / \delta \to 0$ as $\delta \to 0^+$.

    For a number $T \geq 0$, we denote by $\C([0, T], \mathbb{R}^{n \times m})$ the Banach space of all continuous functions $X \colon [0, T] \to \mathbb{R}^{n \times m}$ endowed with the norm
    \begin{equation*}
        \|X(\cdot)\|_{\C([0, T], \mathbb{R}^{n \times m})}
        \doteq \max_{\tau \in [0, T]} \|X(\tau)\|_{\mathbb{R}^{n \times m}}
        \quad \forall X(\cdot) \in \C([0, T], \mathbb{R}^{n \times m}).
    \end{equation*}
    Similarly, we consider the Banach space $\C([0, T], \mathbb{R}^n)$ with the norm $\|\cdot\|_{\C([0, T], \mathbb{R}^n)}$.
    Further, let $\Linf([0, T], \mathbb{R}^n)$ be the linear space of all (Lebesgue) measurable functions $f \colon [0, T] \to \mathbb{R}^n$ such that
    \begin{equation*}
        \esssup{\tau \in [0, T]} \|f(\tau)\|_{\mathbb{R}^n}
        < \infty.
    \end{equation*}

    Let a number $\alpha \in (0, 1)$ be given.
    Following, e.g., \cite[Definition 2.3]{Samko_Kilbas_Marichev_1993}, we introduce the linear space $\AC^\alpha([0, T], \mathbb{R}^n)$ of all functions $x \colon [0, T] \to \mathbb{R}^n$ each of which can be represented in the form
    \begin{equation} \label{x_f}
        x(\tau)
        = x(0) + \frac{1}{\Gamma(\alpha)} \int_{0}^{\tau} \frac{f(\xi)}{(\tau - \xi)^{1 - \alpha}} \, \rd \xi
        \quad \forall \tau \in [0, T]
    \end{equation}
    for some function $f(\cdot) \in \Linf([0, T], \mathbb{R}^n)$.
    In the right-hand side of equality \eqref{x_f}, the second term is the {\it Riemann--Liouville fractional integral of order $\alpha$} of the function $f(\cdot)$ (see, e.g., \cite[Definition 2.1]{Samko_Kilbas_Marichev_1993}) and $\Gamma(\cdot)$ is the gamma-function.
    Note that $\AC^\alpha([0, T], \mathbb{R}^n) \subset \C([0, T], \mathbb{R}^n)$ (see, e.g., \cite[Remark 3.3]{Samko_Kilbas_Marichev_1993}), and, respectively, the space $\AC^\alpha([0, T], \mathbb{R}^n)$ is also endowed with the norm $\|\cdot\|_{\C([0, T], \mathbb{R}^n)}$.

    According to, e.g., \cite[Theorem 2.4]{Samko_Kilbas_Marichev_1993}, every function $x(\cdot) \in \AC^\alpha([0, T], \mathbb{R}^n)$ has at almost every (a.e.) $\tau \in [0, T]$ a {\it Caputo fractional derivative of order $\alpha$}, which is defined by (see, e.g., \cite[Section 2.4]{Kilbas_Srivastava_Trujillo_2006} and \cite[Chapter 3]{Diethelm_2010})
    \begin{equation} \label{Caputo}
        (^C D^\alpha x)(\tau)
        \doteq \frac{1}{\Gamma(1 - \alpha)} \frac{\rd}{\rd \tau} \int_{0}^{\tau} \frac{x(\xi) - x(0)}{(\tau - \xi)^\alpha} \, \rd \xi.
    \end{equation}
    Moreover, if representation \eqref{x_f} is valid for some function $f(\cdot) \in \Linf([0, T], \mathbb{R}^n)$, then $(^C D^\alpha x)(\tau) = f(\tau)$ for a.e. $\tau \in [0, T]$.
    In particular, we have
    \begin{equation} \label{I_D}
        x(\tau)
        = x(0) + \frac{1}{\Gamma(\alpha)} \int_{0}^{\tau} \frac{(^C D^\alpha x)(\xi)}{(\tau - \xi)^{1 - \alpha}} \, \rd \xi
        \quad \forall \tau \in [0, T].
    \end{equation}

    Further, let us consider the sets
    \begin{equation} \label{G}
        \begin{split}
            \mathcal{G}
            & \doteq \bigcup_{t \in [0, T]} \bigl( \{ t \} \times \AC^\alpha([0, t], \mathbb{R}^n) \bigr) \\
            & = \bigl\{ (t, w(\cdot)) \colon
            \, t \in [0, T], \, w(\cdot) \in \AC^\alpha([0, t], \mathbb{R}^n) \bigr\}
        \end{split}
    \end{equation}
    and
    \begin{equation} \label{G_0}
        \mathcal{G}_0
        \doteq \bigl\{ (t, w(\cdot)) \in \mathcal{G}
        \colon t < T \bigr\}.
    \end{equation}
    Note that, in the case $t = 0$, the space $\AC^\alpha([0, t], \mathbb{R}^n)$ can be identified with the space $\mathbb{R}^n$.
    The set $\mathcal{G}$ (and, respectively, its subset $\mathcal{G}_0$) is endowed with the metric
    \begin{equation} \label{dist}
        \dist \bigl( (t^\prime, w^\prime(\cdot)), (t, w(\cdot)) \bigr)
        \doteq |t^\prime - t| + \max_{\tau \in [0, T]} \|w^\prime(\tau \wedge t^\prime) - w(\tau \wedge t)\|_{\mathbb{R}^n}
    \end{equation}
    for all $(t, w(\cdot))$, $(t^\prime, w^\prime(\cdot)) \in \mathcal{G}$.
    Note that, for any $x(\cdot) \in \AC^\alpha([0, T], \mathbb{R}^n)$ and any $t \in [0, T]$, we have $(t, x_t(\cdot)) \in \mathcal{G}$, where we denote by $x_t(\cdot)$ the restriction of the function $x(\cdot)$ to the interval $[0, t]$:
    \begin{equation} \label{x_t}
        x_t(\tau)
        \doteq x(\tau)
        \quad \forall \tau \in [0, t].
    \end{equation}
    We note also that the mapping $[0, T] \times \AC^\alpha([0, T], \mathbb{R}^n) \ni (t, x(\cdot)) \mapsto (t, x_t(\cdot)) \in \mathcal{G}$ is continuous.
    Finally, for every pair $(t, w(\cdot)) \in \mathcal{G}$, we introduce the set of admissible extensions of the function $w(\cdot)$ to the right up to $T$:
    \begin{equation} \label{X_t_w}
        \mathcal{X}(t, w(\cdot))
        \doteq \bigl\{ x(\cdot) \in \AC^\alpha([0, T], \mathbb{R}^n) \colon
        \, x_t(\cdot) = w(\cdot) \bigr\}.
    \end{equation}

\section{Fractional linear-quadratic optimal control problem (FLQOCP)}
\label{section_FLQOCP}

    Let numbers $n$, $m \in \mathbb{N}$, $T > 0$, and $\alpha \in (1 / 2, 1)$ and functions $A(\cdot) \in \C([0, T], \mathbb{R}^{n \times n})$ and $B(\cdot) \in \C([0, T], \mathbb{R}^{n \times m})$ be given.
    We consider a {\it dynamical system} described by the linear fractional differential equation
    \begin{equation} \label{differential_equation}
        (^C D^\alpha x)(\tau)
        = A(\tau) x(\tau) + B(\tau) u(\tau).
    \end{equation}
    In the above, $\tau \in [0, T]$ is time, $x(\tau) \in \mathbb{R}^n$ is a current state, $(^C D^\alpha x)(\tau)$ is the Caputo fractional derivative of order $\alpha$ (see \eqref{Caputo}), $u(\tau) \in \mathbb{R}^m$ is a current control.

    Let a pair $(t, w(\cdot)) \in \mathcal{G}$ be given (see \eqref{G}), where $t$ is treated as an initial time and $w(\cdot)$ is treated as a history of a system motion on the interval $[0, t]$.
    According to, e.g., \cite{Gomoyunov_2020_SIAM}, the pair $(t, w(\cdot))$ plays a role of a {\it position of system} \eqref{differential_equation}.
    By an {\it admissible control} on the interval $[t, T]$, we mean any function $u(\cdot) \in \Linf([t, T], \mathbb{R}^m)$.
    A {\it motion} of system \eqref{differential_equation} generated from $(t, w(\cdot))$ by a control $u(\cdot) \in \Linf([t, T], \mathbb{R}^m)$ is defined as a function $x(\cdot) \in \mathcal{X}(t, w(\cdot))$ (see \eqref{X_t_w}) that together with $u(\cdot)$ satisfies the fractional differential equation \eqref{differential_equation} for a.e. $\tau \in [t, T]$.
    By \cite[Proposition 2]{Gomoyunov_2020_DGA}, such a function $x(\cdot)$ exists and is unique, and we denote it by $x(\cdot) \doteq x(\cdot \mid t, w(\cdot), u(\cdot))$.
    Note that $x(\cdot)$ is a unique function from $\C([0, T], \mathbb{R}^n)$ satisfying the initial condition (see \eqref{x_t})
    \begin{equation} \label{initial_condition}
        x_t(\cdot)
        = w(\cdot)
    \end{equation}
    and the following {\it weakly-singular Volterra integral equation}:
    \begin{equation*}
        x(\tau)
        = a(\tau \mid t, w(\cdot))
        + \frac{1}{\Gamma(\alpha)} \int_{t}^{\tau} \frac{A(\xi) x(\xi) + B(\xi) u(\xi)}{(\tau - \xi)^{1 - \alpha}} \, \rd \xi
        \quad \forall \tau \in [t, T],
    \end{equation*}
    where we use the notation
    \begin{equation} \label{a_definition}
        a(\tau \mid t, w(\cdot))
        \doteq \begin{cases}
            w(\tau) & \mbox{if } \tau \in [0, t], \\
            \displaystyle
            w(0) + \frac{1}{\Gamma(\alpha)} \int_{0}^{t} \frac{(^C D^\alpha w)(\xi)}{(\tau - \xi)^{1 - \alpha}} \, \rd \xi
            & \mbox{if } \tau \in (t, T].
          \end{cases}
    \end{equation}
    Note also that, in the particular case when $t = 0$, the initial condition \eqref{initial_condition} specified by the inclusion $x(\cdot) \in \mathcal{X}(t, w(\cdot))$ takes the usual form $x(0) = x_0$ with $x_0 \doteq w(0)$.

    In addition, let a matrix $P \in \mathbb{R}^{n \times n}$ and functions $Q(\cdot) \in \C([0, T], \mathbb{R}^{n \times n})$ and $R(\cdot) \in \C([0, T], \mathbb{R}^{m \times m})$ be given.
    It is assumed that the matrices $P$ and $Q(\tau)$ for all $\tau \in [0, T]$ are symmetric positive semi-definite, the matrices $R(\tau)$ for all $\tau \in [0, T]$ are symmetric positive definite, and there exists a number $\theta > 0$ such that
    \begin{equation} \label{theta}
        \langle u, R(\tau) u \rangle_{\mathbb{R}^m}
        \geq \theta \|u\|_{\mathbb{R}^m}^2
        \quad \forall u \in \mathbb{R}^m \quad \forall \tau \in [0, T].
    \end{equation}
    We consider the quadratic {\it cost functional}
    \begin{equation} \label{cost_functional}
        \begin{split}
            & J(t, w(\cdot), u(\cdot)) \\
            & \doteq \langle x(T), P x(T) \rangle_{\mathbb{R}^n}
            + \int_{t}^{T} \bigl( \langle x(\tau), Q(\tau) x(\tau) \rangle_{\mathbb{R}^n}
            + \langle u(\tau), R(\tau) u(\tau) \rangle_{\mathbb{R}^m} \bigr) \, \rd \tau
        \end{split}
    \end{equation}
    for all $(t, w(\cdot)) \in \mathcal{G}$ and all $u(\cdot) \in \Linf([t, T], \mathbb{R}^m)$, where $x(\cdot) \doteq x(\cdot \mid t, w(\cdot), u(\cdot))$.

    For a pair $(t, w(\cdot)) \in \mathcal{G}$, the {\it FLQOCP} is to find a control $u(\cdot) \in \Linf([t, T], \mathbb{R}^m)$ that minimizes the cost functional \eqref{cost_functional}.
    Hence, the value of the {\it optimal result} is
    \begin{equation} \label{value}
        \rho(t, w(\cdot))
        \doteq \inf_{u(\cdot) \in \Linf([t, T], \mathbb{R}^m)} J(t, w(\cdot), u(\cdot)),
    \end{equation}
    and, for a number $\varepsilon > 0$, a control $u(\cdot) \in \Linf([t, T], \mathbb{R}^m)$ is called {\it $\varepsilon$-optimal} if
    \begin{equation*}
        J(t, w(\cdot), u(\cdot))
        \leq \rho(t, w(\cdot)) + \varepsilon.
    \end{equation*}

    The main goal of the present paper is to give an explicit formula for the value functional $\mathcal{G} \ni (t, w(\cdot)) \mapsto \rho(t, w(\cdot)) \in \mathbb{R}$ of the FLQOCP and to propose a step-by-step feedback control procedure for constructing $\varepsilon$-optimal controls with any accuracy $\varepsilon > 0$.
    In order to achieve this goal, we will use the technique of the corresponding HJB equations from \cite{Gomoyunov_2020_SIAM}.

    \begin{remark}
        In the case when $Q(\tau) = \mathbb{O}_{n \times n}$ for all $\tau \in [0, T]$, by the scheme from \cite{Gomoyunov_2020_ACS}, the FLQOCP can be reduced to an auxiliary linear-quadratic optimal control problem for a dynamical system described by ordinary differential equations, and, therefore, no special analysis is required.
    \end{remark}

\section{Hamilton--Jacobi--Bellman (HJB) equation and its solution}
\label{section_HJB}

    In this section, following \cite{Gomoyunov_2020_SIAM}, we associate the FLQOCP to a Cauchy problem for the corresponding HJB equation with fractional coinvariant derivatives and the natural right-end boundary condition.
    We give an explicit formula for a solution of this Cauchy problem, which, in particular, agrees with the results of \cite{Han_Lin_Yong_2021}.

    \subsection{Fractional coinvariant ($ci$-) derivatives}

        In accordance with, e.g., \cite{Kim_1999,Gomoyunov_2020_SIAM} (see also \cite{Gomoyunov_Lukoyanov_Plaksin_2021}), we say that a functional $\varphi \colon \mathcal{G} \to \mathbb{R}$ is {\it $ci$-differentiable of order $\alpha$} at a point $(t, w(\cdot)) \in \mathcal{G}_0$ (see \eqref{G_0}) if there exist $\partial_t^\alpha \varphi (t, w(\cdot)) \in \mathbb{R}$ and $\nabla^\alpha \varphi (t, w(\cdot)) \in \mathbb{R}^n$ such that, for any $x(\cdot) \in \mathcal{X}(t, w(\cdot))$ (see \eqref{X_t_w}) and any $t^\prime \in (t, T)$,
        \begin{equation} \label{ci-differentiability}
            \begin{split}
                & \varphi(t^\prime, x_{t^\prime}(\cdot)) - \varphi(t, w(\cdot)) \\
                & = \partial_t^\alpha \varphi(t, w(\cdot)) (t^\prime - t)
                + \biggl\langle \nabla^\alpha \varphi(t, w(\cdot)),
                \int_{t}^{t^\prime} (^C D^\alpha x)(\xi) \, \rd \xi \biggr\rangle_{\mathbb{R}^n}
                + o(t^\prime - t),
            \end{split}
        \end{equation}
        where $x_{t^\prime}(\cdot)$ denotes the restriction of the function $x(\cdot)$ to the interval $[0, t^\prime]$ (see \eqref{x_t}) and the function $o \colon (0, \infty) \to \mathbb{R}$, which may depend on $t$ and $x(\cdot)$, satisfies the condition $o(\delta) / \delta \to 0$ as $\delta \to 0^+$.
        In this case, the quantities $\partial_t^\alpha \varphi (t, w(\cdot))$ and $\nabla^\alpha \varphi(t, w(\cdot))$ are called a {\it $ci$-derivative of order $\alpha$ in $t$} and a {\it $ci$-gradient of order $\alpha$} of the functional $\varphi$ at the point $(t, w(\cdot))$, respectively.

        In addition, we say that a functional $\varphi \colon \mathcal{G} \to \mathbb{R}$ is {\it $ci$-smooth of order $\alpha$} if it is continuous, $ci$-differentiable of order $\alpha$ at every point $(t, w(\cdot)) \in \mathcal{G}_0$, and the mappings $\partial_t^\alpha \varphi \colon \mathcal{G}_0 \to \mathbb{R}$ and $\nabla^\alpha \varphi \colon \mathcal{G}_0 \to \mathbb{R}^n$ are continuous.

    \subsection{Cauchy problem for HJB equation}

        Taking \eqref{differential_equation} and \eqref{cost_functional} into account, let us denote
        \begin{equation} \label{pre_Hamiltonian}
            h(t, x, u, s)
            \doteq \langle s, A(t) x + B(t) u \rangle_{\mathbb{R}^n}
            + \langle x, Q(t) x \rangle_{\mathbb{R}^n}
            + \langle u, R(t) u \rangle_{\mathbb{R}^m}
        \end{equation}
        for all $t \in [0, T]$, all $x$, $s \in \mathbb{R}^n$, and all $u \in \mathbb{R}^m$ and consider the {\it Hamiltonian}
        \begin{equation} \label{Hamiltonian}
            \begin{split}
                H(t, x, s)
                & \doteq \inf_{u \in \mathbb{R}^m} h(t, x, u, s) \\
                & = \langle s, A(t) x \rangle_{\mathbb{R}^n} + \langle x, Q(t) x \rangle_{\mathbb{R}^n}
                - \frac{1}{4} \langle B(t)^\top s, R(t)^{- 1} B(t)^\top s \rangle_{\mathbb{R}^m}
            \end{split}
        \end{equation}
        for all $t \in [0, T]$ and all $x$, $s \in \mathbb{R}^n$.
        Then, according to \cite{Gomoyunov_2020_SIAM}, the FLQOCP can be associated to the {\it Cauchy problem} for the {\it HJB equation}
        \begin{equation} \label{HJB}
            \partial_t^\alpha \varphi(t, w(\cdot)) + H \bigl( t, w(t), \nabla^\alpha \varphi(t, w(\cdot)) \bigr)
            = 0
            \quad \forall (t, w(\cdot)) \in \mathcal{G}_0
        \end{equation}
        under the right-end {\it boundary condition}
        \begin{equation} \label{boundary_condition}
            \varphi(T, w(\cdot))
            = \langle w(T), P w(T) \rangle_{\mathbb{R}^n}
            \quad \forall w(\cdot) \in \AC^\alpha([0, T], \mathbb{R}^n).
        \end{equation}
        By a {\it solution} of this problem, we mean a $ci$-smooth of order $\alpha$ functional $\varphi \colon \mathcal{G} \to \mathbb{R}$ that satisfies both the HJB equation \eqref{HJB} and the boundary condition \eqref{boundary_condition}.

        An explicit formula for a solution of the Cauchy problem \eqref{HJB} and \eqref{boundary_condition}, given in Section \ref{subsection_formulation_of_result} below, includes a solution of a certain Fredholm integral equation (see, e.g., \cite[Section 5]{Han_Lin_Yong_2021}), which is the subject of the next Section \ref{subsection_preliminaries}.

    \subsection{Fredholm integral equation}
    \label{subsection_preliminaries}

        Let us denote
        \begin{equation} \label{Omega}
            \Omega
            \doteq \bigl\{ (\tau, \xi) \in [0, T] \times [0, T] \colon \tau \geq \xi \bigr\}.
        \end{equation}
        Following, e.g., \cite[Section 4 and Proposition 4.3]{Gomoyunov_2019_FCAA_2} (see also, e.g., \cite{Bourdin_2018}), let us consider the {\it fundamental solution matrix} of the fractional differential equation \eqref{differential_equation}, which is a continuous function $\Phi \colon \Omega \to \mathbb{R}^{n \times n}$ such that, for every $\tau \in [0, T]$, the function $[0, \tau] \ni \xi \mapsto \Phi(\tau, \xi) \in \mathbb{R}^{n \times n}$ satisfies the integral equation
        \begin{equation} \label{fundamental_solution_matrix}
            \Phi(\tau, \xi)
            = \frac{\mathbb{I}_{n \times n}}{\Gamma(\alpha)}
            + \frac{(\tau - \xi)^{1 - \alpha}}{\Gamma(\alpha)} \int_{\xi}^{\tau} \frac{\Phi(\tau, \eta) A(\eta)}
            {(\tau - \eta)^{1 - \alpha} (\eta - \xi)^{1 - \alpha}} \, \rd \eta
            \quad \forall \xi \in [0, \tau].
        \end{equation}

        Let us introduce a function $K \colon [0, T] \times [0, T] \to \mathbb{R}^{m \times m}$ by
        \begin{equation} \label{K_definition}
            \begin{split}
                K(\tau, \xi)
                & \doteq B(\tau)^\top \Phi(T, \tau)^\top P \Phi(T, \xi) B(\xi) \\
                & \quad + (T - \tau)^{1 - \alpha} (T - \xi)^{1 - \alpha} B(\tau)^\top \int_{\tau \vee \xi}^{T} \frac{\Phi(\eta, \tau)^\top Q(\eta) \Phi(\eta, \xi)}
                {(\eta - \tau)^{1 - \alpha} (\eta - \xi)^{1 - \alpha}} \, \rd \eta \, B(\xi)
            \end{split}
        \end{equation}
        for all $\tau$, $\xi \in [0, T]$.

        \begin{proposition} \label{proposition_K}
            The function $K(\cdot, \cdot)$ is continuous and the equality below holds:
            \begin{equation} \label{K_symmetric}
                K(\tau, \xi)^\top
                = K(\xi, \tau)
                \quad \forall \tau, \xi \in [0, T].
            \end{equation}
        \end{proposition}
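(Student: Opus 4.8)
The plan is to treat the two summands in \eqref{K_definition} separately, since the symmetry \eqref{K_symmetric} and the continuity assertion are of quite different natures; the symmetry is purely algebraic, whereas continuity is an analytic statement whose only real difficulty lies on the diagonal $\tau = \xi$.

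For the symmetry I would simply transpose the defining expression. Transposing the first summand gives $\bigl(B(\tau)^\top \Phi(T, \tau)^\top P \Phi(T, \xi) B(\xi)\bigr)^\top = B(\xi)^\top \Phi(T, \xi)^\top P^\top \Phi(T, \tau) B(\tau)$, and since $P$ is symmetric this is exactly the first summand of $K(\xi, \tau)$. For the second summand, the scalar prefactor $(T - \tau)^{1 - \alpha} (T - \xi)^{1 - \alpha}$ and the integration limit $\tau \vee \xi$ are already invariant under interchanging $\tau$ and $\xi$; transposing the integrand turns $\Phi(\eta, \tau)^\top Q(\eta) \Phi(\eta, \xi)$ into $\Phi(\eta, \xi)^\top Q(\eta)^\top \Phi(\eta, \tau)$, and the symmetry of $Q(\eta)$ shows this equals the integrand appearing in $K(\xi, \tau)$. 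Hence $K(\tau, \xi)^\top = K(\xi, \tau)$, which also yields that the second summand $S(\tau, \xi)$ (viewed on its own) satisfies $S(\tau, \xi)^\top = S(\xi, \tau)$.

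For continuity, the first summand is a product of the continuous maps $(\tau, \xi) \mapsto \Phi(T, \tau)$, $\Phi(T, \xi)$, $B(\tau)$, $B(\xi)$ and the constant $P$, hence continuous. For the second summand $S(\tau, \xi)$ I would use the relation $S(\tau, \xi)^\top = S(\xi, \tau)$ to reduce the problem to the closed triangle $\Omega_+ \doteq \{(\tau, \xi) \colon \tau \geq \xi\}$: continuity on all of $[0, T] \times [0, T]$ then follows by gluing the continuous restrictions to $\Omega_+$ and to $\Omega_- \doteq \{(\tau, \xi) \colon \tau \leq \xi\}$, which agree on the diagonal. On $\Omega_+$ the integral runs over $[\tau, T]$, where $\eta - \xi \geq \eta - \tau$, so the only singularity of the integrand sits at the lower endpoint $\eta = \tau$ and becomes a double singularity precisely on the diagonal. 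The key estimate is obtained by bounding the continuous matrices $\Phi$, $Q$, $B$ by a constant $C_0$ and using $(\eta - \xi)^{-(1 - \alpha)} \leq (\eta - \tau)^{-(1 - \alpha)}$, which gives $\|S(\tau, \xi)\| \leq C_1 (T - \tau)^{1 - \alpha} (T - \xi)^{1 - \alpha} \int_{\tau}^{T} (\eta - \tau)^{- 2 (1 - \alpha)} \, \rd \eta = C_1 (2 \alpha - 1)^{-1} (T - \tau)^\alpha (T - \xi)^{1 - \alpha} \leq C_2 (T - \tau \vee \xi)^\alpha$, where convergence of the integral requires $\alpha > 1 / 2$. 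This uniform bound immediately delivers continuity (indeed vanishing) of $S$ at the terminal boundary $\tau \vee \xi = T$, where the defining prefactor degenerates.

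At an interior point I would finish by an $\varepsilon / 3$ argument: for small $\delta > 0$ split $\int_{\tau}^{T} = \int_{\tau}^{\tau + \delta} + \int_{\tau + \delta}^{T}$. The near part is bounded, uniformly in $(\tau, \xi) \in \Omega_+$, by $C_0 \int_{\tau}^{\tau + \delta} (\eta - \tau)^{- 2 (1 - \alpha)} \, \rd \eta = C_0 (2 \alpha - 1)^{-1} \delta^{2 \alpha - 1}$, which tends to $0$ as $\delta \to 0^+$ precisely because $\alpha > 1 / 2$; on the far part one has $\eta - \tau \geq \delta$ and $\eta - \xi \geq \delta$, so the integrand is bounded and jointly continuous in $(\eta, \tau, \xi)$ while the lower limit $\tau + \delta$ varies continuously, whence the far part depends continuously on $(\tau, \xi)$. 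Multiplying by the continuous, bounded prefactor and by $B(\tau)^\top$, $B(\xi)$ then yields continuity of $S$ on $\Omega_+$, including across the diagonal. The hard part of the whole argument is exactly this diagonal behaviour, where the two weak singularities coalesce into $(\eta - \tau)^{- 2 (1 - \alpha)}$; both the uniform bound and the near-part estimate hinge on its integrability, that is, on the standing hypothesis $\alpha \in (1 / 2, 1)$, while the off-diagonal and terminal cases are comparatively routine.
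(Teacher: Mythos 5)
Your proof is correct. The symmetry part coincides with the paper's: the paper disposes of \eqref{K_symmetric} in one sentence, observing that it follows from the definition and the symmetry of $P$ and $Q(\tau)$, which is precisely the transposition computation you spell out. For continuity, however, you take a genuinely different route. Like you, the paper strips off the continuous factors $B(\tau)^\top$, $B(\xi)$ and the prefactor and reduces to the weakly singular integral $\hat{K}(\tau,\xi)\doteq\int_{\tau\vee\xi}^{T}\Phi(\eta,\tau)^\top Q(\eta)\Phi(\eta,\xi)\,(\eta-\tau)^{\alpha-1}(\eta-\xi)^{\alpha-1}\,\rd\eta$, but it then argues quantitatively and symmetrically in $(\tau,\xi)$: the increment $\hat{K}(\tau^\prime,\xi^\prime)-\hat{K}(\tau,\xi)$ is split into a part where only the numerator $\Phi(\eta,\cdot)^\top Q(\eta)\Phi(\eta,\cdot)$ varies (estimated by its modulus of continuity times $T^{2\alpha-1}/(2\alpha-1)$) and a part where the kernel and the integration domain vary with the numerator frozen; the latter is handled by the stand-alone technical Proposition \ref{proposition_technical_continuity_2} (with Remark \ref{remark_appendix}), which supplies a uniform modulus of continuity $\bar{\omega}^{(3)}_{1-\alpha}(\cdot)$ for all double-singular integrals of this type and is itself proved via the auxiliary function $p(\tau,\xi)$, a change of variables, and dominated convergence. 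You avoid that lemma entirely: the symmetry $S(\tau,\xi)^\top=S(\xi,\tau)$ together with the pasting lemma for the two closed triangles reduces the problem to $\tau\geq\xi$, where the pointwise domination $(\eta-\xi)^{\alpha-1}\leq(\eta-\tau)^{\alpha-1}$ merges the two weak singularities into the single integrable one $(\eta-\tau)^{2\alpha-2}$, after which the uniform $O(\delta^{2\alpha-1})$ bound on the near part exhibits the integral as a uniform limit of continuous far parts. Your argument is more elementary and self-contained, and it delivers exactly what the proposition asserts (plain continuity, plus the vanishing at $\tau\vee\xi=T$ for free); the paper's heavier machinery is not wasted effort, though, since the quantitative lemma it invokes is reused in Appendix B (for instance in the proof of Proposition \ref{proposition_v_int_t_T}), where uniform moduli of continuity are genuinely needed. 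Both arguments hinge, as you rightly stress, on the standing hypothesis $\alpha>1/2$.
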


        The proof is relegated to Appendix \ref{appendix}.

        Further, let us denote
        \begin{equation} \label{Theta}
            \Theta
            \doteq \big\{ (\tau, \xi, t) \in [0, T] \times [0, T] \times [0, T] \colon
            \tau \wedge \xi \geq t \bigr\}.
        \end{equation}
        We have
        \begin{proposition} \label{proposition_M}
            There exists a unique continuous function
            \begin{equation*}
                \Theta \ni (\tau, \xi, t) \mapsto M(\tau, \xi \mid t) \in \mathbb{R}^{m \times m}
            \end{equation*}
            such that, for any $t \in [0, T]$ and any $\xi \in [t, T]$, the function $[t, T] \ni \tau \mapsto M(\tau, \xi \mid t)$ is a unique continuous solution of the {\rm Fredholm integral equation}
            \begin{equation} \label{M_definition}
                R(\tau) M(\tau, \xi \mid t) + \int_{t}^{T} \frac{K(\tau, \eta) M(\eta, \xi \mid t)}{(T - \eta)^{2 - 2 \alpha}} \, \rd \eta
                = - K(\tau, \xi) R(\xi)^{- 1}
                \quad \forall \tau \in [t, T].
            \end{equation}
            In addition, the function $M(\cdot, \cdot \mid \cdot)$ possesses the following two properties:

            {\rm (i)}
                For any $t \in [0, T)$ and any $\vartheta \in (t, T)$, there exists an infinitesimal modulus of continuity $\omega_{M}^\ast(\cdot)$ such that, for any $t^\prime \in (t, \vartheta]$ and any $\tau$, $\xi \in [t^\prime, T]$,
                \begin{equation*}
                    \biggl\| M(\tau, \xi \mid t^\prime) - M(\tau, \xi \mid t)
                    + \frac{(t^\prime - t) M(\tau, t \mid t) R(t) M(t, \xi \mid t)}{(T - t)^{2 - 2 \alpha}} \biggr\|_{\mathbb{R}^{m \times m}}
                    \leq \omega_{M}^\ast(t^\prime - t).
                \end{equation*}

            {\rm (ii)}
                The equality below is valid:
                \begin{equation} \label{M_symmetric}
                    M(\tau, \xi \mid t)^\top
                    = M(\xi, \tau \mid t)
                    \quad \forall (\tau, \xi, t) \in \Theta.
                \end{equation}
        \end{proposition}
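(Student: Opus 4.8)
The plan is to read \eqref{M_definition}, for each fixed $t$ and $\xi$, as a Fredholm integral equation of the second kind for the columns of the $\mathbb{R}^{m \times m}$-valued map $\tau \mapsto M(\tau, \xi \mid t)$. Dividing by $R(\tau)$, which is invertible by \eqref{theta}, the integral operator acquires the kernel $R(\tau)^{-1} K(\tau, \eta) / (T - \eta)^{2 - 2 \alpha}$; since $\alpha \in (1/2, 1)$ yields $2 - 2 \alpha \in (0, 1)$, the factor $(T - \eta)^{-(2 - 2 \alpha)}$ is integrable in $\eta$ with its only singularity at $\eta = T$, so by Proposition \ref{proposition_K} the operator is compact on $\C([t, T], \mathbb{R}^{m})$. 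By the Fredholm alternative, existence and uniqueness of a continuous solution reduce to showing that the homogeneous equation has only the trivial solution. To prove the latter I would run an energy argument: writing $w(\eta) \doteq (T - \eta)^{-(2 - 2 \alpha)}$, I pair the homogeneous equation with $w(\tau) v(\tau)$ and integrate over $[t, T]$, obtaining
\begin{equation*}
\int_{t}^{T} w(\tau) \langle v(\tau), R(\tau) v(\tau) \rangle_{\mathbb{R}^m} \, \rd \tau
+ \int_{t}^{T} \! \int_{t}^{T} w(\tau) w(\eta) \langle v(\tau), K(\tau, \eta) v(\eta) \rangle_{\mathbb{R}^m} \, \rd \eta \, \rd \tau
= 0 .
\end{equation*}
The first term dominates $\theta \int_{t}^{T} w(\tau) \|v(\tau)\|_{\mathbb{R}^m}^2 \, \rd \tau$ by \eqref{theta}. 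Splitting $K$ as in \eqref{K_definition}, the $P$-part of the double integral collapses to $\langle \psi, P \psi \rangle_{\mathbb{R}^n}$ with $\psi \doteq \int_{t}^{T} w(\eta) \Phi(T, \eta) B(\eta) v(\eta) \, \rd \eta$, while the $Q$-part, after cancelling the weights against the factors $(T - \tau)^{1 - \alpha} (T - \eta)^{1 - \alpha}$ and interchanging the order of integration, collapses to $\int_{t}^{T} \langle \chi(\zeta), Q(\zeta) \chi(\zeta) \rangle_{\mathbb{R}^n} \, \rd \zeta$, where $\chi(\zeta) \doteq \int_{t}^{\zeta} (T - \tau)^{\alpha - 1} (\zeta - \tau)^{\alpha - 1} \Phi(\zeta, \tau) B(\tau) v(\tau) \, \rd \tau$; here $\alpha > 1/2$ is exactly what keeps the doubly singular kernel of $\chi$ integrable near $\zeta = \tau = T$. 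Both parts are nonnegative because $P \geq 0$ and $Q(\zeta) \geq 0$, so the identity forces $v \equiv 0$. The same computation applied to the inhomogeneous equation, with Cauchy--Schwarz, bounds the uniform norm of $M(\cdot, \xi \mid t)$ by that of $K(\cdot, \xi) R(\xi)^{-1}$ with a constant independent of $t$; this uniform resolvent bound drives the rest of the proof, and joint continuity of $M$ on $\Theta$ follows from it together with continuous dependence of the data $t$, $\xi$, the moving endpoint $t$ being handled by item (i).

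For item (i) I would subtract \eqref{M_definition} written at $t$ from the one written at $t^\prime$, both evaluated at $\tau, \xi \in [t^\prime, T]$. The right-hand sides cancel, so $\Delta(\tau) \doteq M(\tau, \xi \mid t^\prime) - M(\tau, \xi \mid t)$ solves the Fredholm equation with parameter $t^\prime$ and right-hand side $G(\tau) \doteq \int_{t}^{t^\prime} w(\eta) K(\tau, \eta) M(\eta, \xi \mid t) \, \rd \eta$. As $t^\prime \to t$ the interval $[t, t^\prime]$ shrinks and the integrand is continuous there (no singularity, since $t^\prime < T$), so $G(\tau) = (t^\prime - t) w(t) K(\tau, t) M(t, \xi \mid t) + o(t^\prime - t)$ uniformly. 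The candidate first-order term is $N(\tau) \doteq - w(t) M(\tau, t \mid t) R(t) M(t, \xi \mid t)$, and a direct substitution shows it solves the linearized equation $R(\tau) N(\tau) + \int_{t}^{T} w(\eta) K(\tau, \eta) N(\eta) \, \rd \eta = w(t) K(\tau, t) M(t, \xi \mid t)$: the bracket $R(\tau) M(\tau, t \mid t) + \int_{t}^{T} w(\eta) K(\tau, \eta) M(\eta, t \mid t) \, \rd \eta$ equals $- K(\tau, t) R(t)^{-1}$ by \eqref{M_definition} taken at $\xi = t$, and the factors $R(t)^{-1} R(t)$ cancel. It then remains to show that $\Delta - (t^\prime - t) N$ solves the Fredholm equation with a right-hand side that is $o(t^\prime - t)$ uniformly in $\tau, \xi$; the uniform resolvent bound converts this into $\| \Delta - (t^\prime - t) N \| \leq \omega_M^\ast(t^\prime - t)$ for an infinitesimal modulus, which, since $w(t) = (T - t)^{-(2 - 2 \alpha)}$, is precisely the asserted estimate.

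For item (ii) I would interpret \eqref{M_definition} as an operator identity on $\operatorname{L}_2([t, T], \mathbb{R}^m)$: with $\mathcal{R}$ and $W$ denoting multiplication by $R(\tau)$ and by $w(\tau)$, $\mathcal{K}$ the integral operator with kernel $K(\tau, \eta)$, and $\mathcal{M}$ the one with kernel $M(\tau, \eta \mid t)$, the equation reads $(\mathcal{R} + \mathcal{K} W) \mathcal{M} = - \mathcal{K} \mathcal{R}^{-1}$. Now $\mathcal{R}^\ast = \mathcal{R}$ and $W^\ast = W$ since $R(\tau)$ is symmetric and $w(\tau)$ is scalar, while $\mathcal{K}^\ast = \mathcal{K}$ by \eqref{K_symmetric}. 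Taking adjoints and manipulating the resolvent, using only that the two multiplication operators $\mathcal{R}^{-1}$ and $W$ commute, yields $\mathcal{M}^\ast = \mathcal{M}$, which is exactly \eqref{M_symmetric}; equivalently, and more elementarily, one checks directly that $(\tau, \xi) \mapsto M(\xi, \tau \mid t)^\top$ satisfies \eqref{M_definition} and appeals to the uniqueness established above.

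I expect the main obstacle to be item (i): one must track the dependence of the whole solution on the moving left endpoint $t$ of the integration interval and extract the leading term with an infinitesimal—not merely bounded—modulus. This rests squarely on the uniform-in-$t$ resolvent bound, hence on the coercivity argument, whose structural heart is the rearrangement producing the perfect square $\int \langle \chi, Q \chi \rangle_{\mathbb{R}^n}$; the weakly singular weight concentrated at $T$ must be carried consistently through every estimate, and its interaction with the condition $\alpha > 1/2$ is what keeps all the kernels integrable.
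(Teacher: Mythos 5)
Your proposal follows essentially the paper's own route. The decisive ingredient in both is the coercivity obtained by splitting $K$ via \eqref{K_definition}, with the $P$-part collapsing to $\langle \psi, P \psi \rangle_{\mathbb{R}^n}$ and the $Q$-part, after Fubini, to $\int_t^T \langle \chi(\zeta), Q(\zeta) \chi(\zeta) \rangle_{\mathbb{R}^n} \, \rd \zeta$, together with the resulting resolvent bound whose constants are independent of $t$; and your treatment of item (i) (first-order candidate checked through \eqref{M_definition} at $\xi = t$, remainder solving the Fredholm equation with a uniformly $o(t^\prime - t)$ right-hand side, including the $O((t^\prime - t)^2)$ correction from the moving lower limit) is precisely the paper's proof of statement (i). The packaging of existence differs only mildly: the paper inverts the bounded, self-adjoint, coercive operator $\A2_t$ on $\L2([t, T], \mathbb{R}^m)$, whose integral kernel carries the symmetrically split weight $(T - \tau)^{\alpha - 1} (T - \eta)^{\alpha - 1}$, and then bootstraps continuity of the solution, whereas you invoke the Fredholm alternative on $\C([t, T], \mathbb{R}^m)$; your pairing against $w(\tau) v(\tau)$, with $w(\tau) \doteq (T - \tau)^{2 \alpha - 2}$, is the same estimate after the substitution $\f2(\tau) = (T - \tau)^{\alpha - 1} v(\tau)$.

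Three points, however, are genuine gaps as written, all repairable with tools you already set up. (1) In item (ii) you pose the operator identity on the \emph{unweighted} space $\L2([t, T], \mathbb{R}^m)$; but multiplication by $w$ is an unbounded operator there, and for $\alpha \in (1/2, 3/4)$ the operator with kernel $K(\tau, \eta) w(\eta)$ is not even everywhere defined on $\L2$: taking $\|f(\eta)\|_{\mathbb{R}^m} = (T - \eta)^{- 1/2 + \epsilon}$ gives $f \in \L2$ while $\int_t^T w(\eta) \|f(\eta)\|_{\mathbb{R}^m} \, \rd \eta = \infty$, and $K(\tau, T) = B(\tau)^\top \Phi(T, \tau)^\top P \Phi(T, T) B(T)$ need not vanish. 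This is exactly why the paper splits the weight symmetrically, which makes the integral part of $\A2_t$ Hilbert--Schmidt; your adjoint manipulation (commutativity of the multiplications by $R(\cdot)^{-1}$ and by $w$) is correct once transplanted to $\L2$ with weight $w$, but as posed it is formal. (2) Your ``more elementarily'' fallback for (ii) is circular: substituting $\tilde{M}(\tau, \xi) \doteq M(\xi, \tau \mid t)^\top$ into \eqref{M_definition} and transposing produces the \emph{right-sided} equation $M(\xi, \tau \mid t) R(\tau) + \int_t^T w(\eta) M(\xi, \eta \mid t) K(\eta, \tau) \, \rd \eta = - R(\xi)^{- 1} K(\xi, \tau)$, which is not an instance of \eqref{M_definition} and is not known beforehand; establishing it amounts to the left/right resolvent identification, i.e., to the adjoint argument (or to the symmetry itself), so uniqueness alone cannot close this step. (3) The $t$-direction of joint continuity of $M$ on $\Theta$ cannot be delegated to item (i): that statement is tied to fixed $t < T$ and $\vartheta < T$ and its modulus degenerates as $\vartheta \to T$, so it gives nothing at points of $\Theta$ with $t$ near or equal to $T$. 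The paper instead proves a modulus uniform in $t$ directly from the resolvent bound, estimating the right-hand side $\int_t^{t^\prime} w(\eta) K(\tau, \eta) M(\eta, \xi \mid t) \, \rd \eta$ by $\mu_K \mu_M (t^\prime - t)^{2 \alpha - 1} / (2 \alpha - 1)$ --- a one-line fix with your uniform bound.
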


        The proof is relegated to Appendix \ref{appendix}.

    \subsection{Solution of Cauchy problem}
    \label{subsection_formulation_of_result}

        For every pair $(t, w(\cdot)) \in \mathcal{G}$, taking the function $a(\cdot \mid t, w(\cdot))$ from \eqref{a_definition}, let us define the auxiliary functions
        \begin{equation} \label{b_definition}
            b(\tau \mid t, w(\cdot))
            \doteq a(\tau \mid t, w(\cdot))
            + \int_{t}^{\tau} \frac{\Phi(\tau, \xi) A(\xi) a(\xi \mid t, w(\cdot))}{(\tau - \xi)^{1 - \alpha}} \, \rd \xi
        \end{equation}
        and
        \begin{equation} \label{c_definition}
            \begin{split}
                c(\tau \mid t, w(\cdot))
                & \doteq B(\tau)^\top \Phi(T, \tau)^\top P b(T \mid t, w(\cdot)) \\
                & \quad + (T - \tau)^{1 - \alpha} B(\tau)^\top \int_{\tau}^{T} \frac{\Phi(\xi, \tau)^\top Q(\xi) b(\xi \mid t, w(\cdot))}
                {(\xi - \tau)^{1 - \alpha}} \, \rd \xi
            \end{split}
        \end{equation}
        for all $\tau \in [t, T]$.
        Let us note that, for any $t \in [0, T]$ and any $\tau \in [t, T]$, the mapping $\AC^\alpha ([0, t], \mathbb{R}^n) \ni w(\cdot) \mapsto (a(\tau \mid t, w(\cdot)), b(\tau \mid t, w(\cdot)), c(\tau \mid t, w(\cdot)))$ is linear.
        Then, let us consider a quadratic functional $\varphi \colon \mathcal{G} \to \mathbb{R}$ given by
        \begin{equation} \label{varphi_main}
            \begin{split}
                \hspace*{-0.1em} \varphi(t, w(\cdot))
                & \doteq \langle b(T), P b(T) \rangle_{\mathbb{R}^n}
                + \int_{t}^{T} \langle b(\tau), Q(\tau) b(\tau) \rangle_{\mathbb{R}^n} \, \rd \tau \\
                & \quad - \int_{t}^{T} \biggl\langle \frac{c(\tau)}{(T - \tau)^{2 - 2 \alpha}},
                R(\tau)^{- 1} c(\tau)
                + \int_{t}^{T} \frac{M(\tau, \xi \mid t) c(\xi)}{(T - \xi)^{2 - 2 \alpha}} \, \rd \xi \biggr \rangle_{\mathbb{R}^m} \, \rd \tau
            \end{split}
        \end{equation}
        for all $(t, w(\cdot)) \in \mathcal{G}$, where the function $M(\cdot, \cdot \mid \cdot)$ is a solution of the Fredholm integral equation \eqref{M_definition} from Proposition \ref{proposition_M} and, for brevity, the shorthand notation $b(\cdot) \doteq b(\cdot \mid t, w(\cdot))$ and $c(\cdot) \doteq c(\cdot \mid t, w(\cdot))$ is used.

        The main result of the first part of the paper is
        \begin{theorem} \label{theorem_solution}
            The functional $\varphi$ given by \eqref{varphi_main} is a solution of the Cauchy problem for the HJB equation \eqref{HJB} and the boundary condition \eqref{boundary_condition}.
            In addition, the equality
            \begin{equation} \label{nabla_varphi_main}
                B(t)^\top \nabla^\alpha \varphi (t, w(\cdot))
                = \frac{2 R(t)}{(T - t)^{1 - \alpha}}
                \biggl( R(t)^{- 1} c(t)
                + \int_{t}^{T} \frac{M(t, \xi \mid t) c(\xi)}{(T - \xi)^{2 - 2 \alpha}} \, \rd \xi \biggr)
            \end{equation}
            holds for all $(t, w(\cdot)) \in \mathcal{G}_0$, where $\nabla^\alpha \varphi (t, w(\cdot))$ is the $ci$-gradient of order $\alpha$ of the functional $\varphi$ at the point $(t, w(\cdot))$ and $c(\cdot) \doteq c(\cdot \mid t, w(\cdot))$.
        \end{theorem}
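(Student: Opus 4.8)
The plan is to establish the three assertions in order: the boundary condition \eqref{boundary_condition}, the $ci$-smoothness of order $\alpha$ together with the gradient formula \eqref{nabla_varphi_main}, and finally the HJB equation \eqref{HJB}. The boundary condition is immediate: putting $t = T$ in \eqref{a_definition} gives $a(\tau \mid T, w(\cdot)) = w(\tau)$, hence $b(T \mid T, w(\cdot)) = w(T)$ by \eqref{b_definition}, while both integrals $\int_T^T$ in \eqref{varphi_main} vanish, so $\varphi(T, w(\cdot)) = \langle w(T), P w(T) \rangle_{\mathbb{R}^n}$. Continuity of $\varphi$ (and, later, of the $ci$-derivatives) I would derive from the continuity of $\Phi$, $K$, and $M$ (Propositions \ref{proposition_K} and \ref{proposition_M}) and of the linear maps $w(\cdot) \mapsto (a(\cdot), b(\cdot), c(\cdot))$, using that the kernels in \eqref{b_definition}, \eqref{c_definition}, \eqref{varphi_main} are integrable exactly because $\alpha > 1/2$ forces $2 - 2\alpha < 1$.

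The heart of the argument is the increment $\varphi(t', x_{t'}(\cdot)) - \varphi(t, w(\cdot))$ for fixed $(t, w(\cdot)) \in \mathcal{G}_0$, an extension $x(\cdot) \in \mathcal{X}(t, w(\cdot))$, and $t' \to t^+$, with $\Delta \doteq \int_t^{t'} (^C D^\alpha x)(\xi) \, \rd \xi$. From \eqref{a_definition}, $a(\tau \mid t', x_{t'}(\cdot)) - a(\tau \mid t, w(\cdot)) = \frac{1}{\Gamma(\alpha)} \int_t^{t'} (\tau - \xi)^{\alpha - 1} (^C D^\alpha x)(\xi) \, \rd \xi$, whose leading term for $\tau > t$ is $\Delta / (\Gamma(\alpha)(\tau - t)^{1 - \alpha})$. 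Dividing \eqref{fundamental_solution_matrix} at $\xi = t$ by $(\tau - t)^{1-\alpha}$ gives the identity $\frac{\Phi(\tau, t)}{(\tau - t)^{1 - \alpha}} = \frac{\mathbb{I}_{n \times n}}{\Gamma(\alpha)(\tau - t)^{1 - \alpha}} + \frac{1}{\Gamma(\alpha)} \int_t^\tau \frac{\Phi(\tau, \eta) A(\eta)}{(\tau - \eta)^{1 - \alpha} (\eta - t)^{1 - \alpha}} \, \rd \eta$, which identifies $\Phi(\tau, t)/(\tau - t)^{1-\alpha}$ as the impulse response of \eqref{b_definition}. Propagating the increment of $a$ through \eqref{b_definition}, and accounting for the shift of the lower limit $t \to t'$, I would obtain
\[ b(\tau \mid t', x_{t'}(\cdot)) - b(\tau \mid t, w(\cdot)) = \frac{\Phi(\tau, t)}{(\tau - t)^{1 - \alpha}} \bigl( \Delta - A(t) w(t) (t' - t) \bigr) + o(t' - t), \]
together with the corresponding expansion for $c$ obtained by applying the linear map \eqref{c_definition}.

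Collecting in this increment the terms proportional to $\Delta$ identifies $\nabla^\alpha \varphi(t, w(\cdot))$. Writing $g(\tau) \doteq R(\tau)^{-1} c(\tau) + \int_t^T \frac{M(\tau, \xi \mid t) c(\xi)}{(T - \xi)^{2 - 2\alpha}} \, \rd \xi$, the first two terms of \eqref{varphi_main} contribute, after left multiplication by $B(t)^\top$ and use of \eqref{c_definition}, the vector $2 c(t) / (T - t)^{1 - \alpha}$, while the quadratic form in \eqref{varphi_main} contributes $- \frac{2}{(T - t)^{1 - \alpha}} \int_t^T \frac{K(t, \tau) g(\tau)}{(T - \tau)^{2 - 2\alpha}} \, \rd \tau$ once the double integral is reorganized and the kernel \eqref{K_definition} is recognized; here the symmetries \eqref{K_symmetric} and \eqref{M_symmetric} are what let the cross terms be paired. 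The Fredholm equation \eqref{M_definition} taken at first argument $t$ then collapses this integral to $- R(t) \int_t^T \frac{M(t, \xi \mid t) c(\xi)}{(T - \xi)^{2 - 2\alpha}} \, \rd \xi$, and the two contributions combine into exactly the right-hand side of \eqref{nabla_varphi_main}. For the HJB equation I would instead collect the terms proportional to $(t' - t)$, coming from three sources: the explicit lower limits in \eqref{varphi_main}, the increment of $M(\cdot, \cdot \mid t)$ in its initial variable given by Proposition \ref{proposition_M}(i), and the $(t' - t)$-part $-\Phi(\tau, t)(\tau - t)^{\alpha - 1} A(t) w(t)$ of the increments of $b$ and $c$. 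Using $b(t \mid t, w(\cdot)) = w(t)$, the third source equals $- \langle \nabla^\alpha \varphi(t, w(\cdot)), A(t) w(t) \rangle_{\mathbb{R}^n} (t' - t)$ and cancels the drift term of \eqref{Hamiltonian}; the lower-limit term $- \langle w(t), Q(t) w(t) \rangle_{\mathbb{R}^n}$ cancels the term $\langle x, Q(t) x \rangle_{\mathbb{R}^n}$ of \eqref{Hamiltonian} at $x = w(t)$; and the remaining lower-limit and $M$-increment contributions add up to $\langle g(t), R(t) g(t) \rangle_{\mathbb{R}^m} / (T - t)^{2 - 2\alpha}$, which cancels $- \frac{1}{4} \langle B(t)^\top \nabla^\alpha \varphi, R(t)^{-1} B(t)^\top \nabla^\alpha \varphi \rangle_{\mathbb{R}^m}$ after \eqref{nabla_varphi_main} is substituted and $g(t)$ is expanded.

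The main obstacle is the rigorous justification of the increment formulas for $b$ and $c$ with a remainder that is genuinely $o(t' - t)$ and uniform enough to survive integration against the weakly-singular weights $1/(T - \tau)^{2 - 2\alpha}$ and $1/(\tau - t)^{1 - \alpha}$. The delicate points are the behaviour as $\tau \to t^+$, where $(\tau - t)^{\alpha - 1}$ is singular, and the double-singular kernels $(\tau - \eta)^{\alpha - 1} (\eta - \xi)^{\alpha - 1}$, whose integrability is precisely why $\alpha \in (1/2, 1)$ is assumed. I expect the cleanest route is to isolate, once and for all, the linear response $b(\cdot) \mapsto c(\cdot)$ and the impulse response $\Phi(\tau, t)/(\tau - t)^{1 - \alpha}$ as bounded operators between suitable weighted function spaces, and to check that the expansion in Proposition \ref{proposition_M}(i) is compatible with these weights, so that every remainder can be dominated by a single infinitesimal modulus of continuity; this is also what the special $ci$-differentiation technique for quadratic functionals must supply.
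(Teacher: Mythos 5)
Your proposal follows essentially the same route as the paper's own proof: the same decomposition of $\varphi$ into the terminal part, the $Q$-integral part, and the Fredholm-quadratic part, the same first-order expansions of $a$, $b$, $c$ (with $\nabla^\alpha b(\tau) = \Phi(\tau, t)$ identified through the identity obtained by dividing \eqref{fundamental_solution_matrix} by $(\tau - t)^{1 - \alpha}$, and the drift term coming from the shift of the lower limit), the same collapse of the gradient via the Fredholm equation \eqref{M_definition} together with the symmetries \eqref{K_symmetric} and \eqref{M_symmetric}, and the same three-way cancellation scheme for the HJB equation. The ``main obstacle'' you flag --- remainders that are $o(t^\prime - t)$ uniformly enough to survive integration against the weakly-singular weights --- is precisely what the paper's Condition $(\star)$ machinery (Propositions \ref{proposition_v_linear_combination}--\ref{proposition_v_int_M} and Lemmas \ref{lemma_terminal_functional}, \ref{lemma_integral_functional}, proved in the appendices) supplies, with remainders of the controlled form $\omega^\ast(t^\prime - t)/(\tau - t^\prime)^{\varkappa}$, so your proposed functional-analytic setup is a restatement of, rather than an alternative to, the paper's technique.
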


        In the next Section \ref{subsection_quadratic_functionals}, we develop a special technique for $ci$-differentiating of order $\alpha$ of quadratic functionals of a rather general form.
        This allows us to simplify the proof of Theorem \ref{theorem_solution}, which is given in Section \ref{subsection_proof_theorem_solution} below.

        Note that, From the proof of Theorem \ref{theorem_solution}, explicit formulas for $ci$-derivatives of order $\alpha$ of the functional $\varphi$ given by \eqref{varphi_main} can be derived.
        Nevertheless, we do not include these formulas in the formulation of the theorem since they are rather cumbersome and are not important for further results of the paper.

    \subsection{Quadratic functionals of general form}
    \label{subsection_quadratic_functionals}

        Within this section, we fix a number $k \in \mathbb{N}$, keeping in mind that $k = n$ or $k = m$.

        Let us introduce an auxiliary notion.
        Suppose that, for every point $(t, w(\cdot)) \in \mathcal{G}$, a function
        \begin{equation} \label{v}
            [t, T] \ni \tau \mapsto v(\tau \mid t, w(\cdot)) \in \mathbb{R}^k
        \end{equation}
        is given (for example, the restriction of the function $a(\cdot \mid t, w(\cdot))$ from \eqref{a_definition} to $[t, T]$, the function $b(\cdot \mid t, w(\cdot))$ from \eqref{b_definition}, or the function $c(\cdot \mid t, w(\cdot))$ from \eqref{c_definition}).
        In what follows, it is convenient to refer to $v(\cdot \mid \cdot, \cdot)$ as a {\it family of functions}.
        Then, we say that the {\it family of functions $v(\cdot \mid \cdot, \cdot)$ satisfies Condition $(\star)$ with parameters $(\beta, \gamma) \in [0, \infty) \times [0, 1)$} if, for every point $(t, w(\cdot)) \in \mathcal{G}$, there exist functions
        \begin{equation} \label{v_derivatives}
            [t, T] \ni \tau \mapsto \partial_t^\alpha v(\tau \mid t, w(\cdot)) \in \mathbb{R}^k,
            \quad [t, T] \ni \tau \mapsto \nabla^\alpha v(\tau \mid t, w(\cdot)) \in \mathbb{R}^{k \times n}
        \end{equation}
        such that the following three conditions are satisfied:

        (i$_\star$)
            For every compact set $\mathcal{G}_\ast \subset \mathcal{G}$, functions \eqref{v} and \eqref{v_derivatives} for $(t, w(\cdot)) \in \mathcal{G}_\ast$ are uniformly bounded and equicontinuous, i.e., there are a number $\mu_v \geq 0$ and a modulus of continuity $\omega_v^{(1)}(\cdot)$ such that, for any $(t, w(\cdot)) \in \mathcal{G}_\ast$ and any $\tau$, $\tau^\prime \in [t, T]$,
            \begin{equation*}
                \| v(\tau \mid t, w(\cdot))\|_{\mathbb{R}^k}
                \vee \|\partial_t^\alpha v(\tau \mid t, w(\cdot))\|_{\mathbb{R}^k}
                \vee \|\nabla^\alpha v(\tau \mid t, w(\cdot))\|_{\mathbb{R}^{k \times n}}
                \leq \mu_v
            \end{equation*}
            and
            \begin{equation*}
                \begin{split}
                    & \|v(\tau^\prime \mid t, w(\cdot)) - v(\tau \mid t, w(\cdot))\|_{\mathbb{R}^k}
                    \vee \|\partial_t^\alpha v(\tau^\prime \mid t, w(\cdot)) - \partial_t^\alpha v(\tau \mid t, w(\cdot))\|_{\mathbb{R}^k} \\
                    & \vee \|\nabla^\alpha v(\tau^\prime \mid t, w(\cdot)) - \nabla^\alpha v(\tau \mid t, w(\cdot))\|_{\mathbb{R}^{k \times n}}
                    \leq \omega_v^{(1)}(|\tau^\prime - \tau|).
                \end{split}
            \end{equation*}

        (ii$_\star$)
            For every compact set $\mathcal{G}_\ast \subset \mathcal{G}$, there exists a modulus of continuity $\omega_v^{(2)}(\cdot)$ such that, for any $(t, w(\cdot))$, $(t^\prime, w^\prime(\cdot)) \in \mathcal{G}_\ast$ and any $\tau \in [t \vee t^\prime, T]$,
            \begin{align*}
                & \|v(\tau \mid t^\prime, w^\prime(\cdot)) - v(\tau \mid t, w(\cdot))\|_{\mathbb{R}^k}
                \vee \|\partial_t^\alpha v(\tau \mid t^\prime, w^\prime(\cdot))
                - \partial_t^\alpha v(\tau \mid t, w(\cdot))\|_{\mathbb{R}^k} \\
                & \vee \|\nabla^\alpha v(\tau \mid t^\prime, w^\prime(\cdot))
                - \nabla^\alpha v(\tau \mid t, w(\cdot))\|_{\mathbb{R}^{k \times n}}
                \leq \omega_v^{(2)} \Bigl( \dist \bigl( (t^\prime, w^\prime(\cdot)), (t, w(\cdot)) \bigr) \Bigr),
            \end{align*}
            where the metric $\dist$ is given by \eqref{dist}.

        (iii$_\star$)
            Let $(t, w(\cdot)) \in \mathcal{G}_0$, $x(\cdot) \in \mathcal{X}(t, w(\cdot))$ (see \eqref{X_t_w}), and $\vartheta \in (t, T)$.
            Then, there are a number $\varkappa_v \in [0, 1)$ and an infinitesimal modulus of continuity $\omega_v^\ast(\cdot)$ such that
            \begin{align*}
                & \biggl\| v(\tau \mid t^\prime, x_{t^\prime}(\cdot)) - v(\tau \mid t, w(\cdot))
                - \frac{\partial_t^\alpha v(\tau \mid t, w(\cdot)) (t^\prime - t)
                + \nabla^\alpha v(\tau \mid t, w(\cdot)) z(t^\prime)}{(T - t)^\beta (\tau - t)^\gamma} \biggr\|_{\mathbb{R}^k} \\
                & \leq \frac{\omega_v^\ast(t^\prime - t)}{(\tau - t^\prime)^{\varkappa_v}}
                \quad \forall \tau \in (t^\prime, T] \quad \forall t^\prime \in (t, \vartheta],
            \end{align*}
            where $x_{t^\prime}(\cdot)$ is the restriction of the function $x(\cdot)$ to the interval $[0, t^\prime]$ (see \eqref{x_t}) and
            \begin{equation} \label{z}
                z(\xi)
                \doteq \int_{t}^{\xi} (^C D^\alpha x)(\eta) \, \rd \eta
                \quad \forall \xi \in [t, T].
            \end{equation}

        Note that, in accordance with relation \eqref{ci-differentiability}, condition (iii$_\star$) plays a role of a property of $ci$-differentiability of order $\alpha$ of the mapping $(t, w(\cdot)) \mapsto v(\tau \mid t, w(\cdot))$.
        That is why we propose to use the notation $\partial_t^\alpha v(\tau \mid t, w(\cdot))$ and $\nabla^\alpha v(\tau \mid t, w(\cdot))$.
        On the other hand, it would be more accurate to use this notation for the considered values divided by the quantity $(T - t)^\beta (\tau - t)^\gamma$.
        However, for the goals of the present section, it is convenient to deal with these possible singularities in a direct way.

        Five propositions below describe some of transformations that can be done with families of functions satisfying Condition $(\star)$ so that the resulting family of functions also satisfies this condition.
        Propositions \ref{proposition_v_linear_combination} and \ref{proposition_v_T} follow directly from the definition, and, therefore, their proofs are omitted.
        The proofs of Propositions \ref{proposition_v_int_t_T}, \ref{proposition_v_int_t_ast_t}, and \ref{proposition_v_int_M} are relegated to Appendix \ref{appendix_2}.

        \begin{proposition} \label{proposition_v_linear_combination}
            Let families of functions $v_1(\cdot \mid \cdot, \cdot)$, $v_2(\cdot \mid \cdot, \cdot)$ satisfy Condition $(\star)$ with parameters $(\beta_1, \gamma_1)$, $(\beta_2, \gamma_2) \in [0, \infty) \times [0, 1)$, respectively.
            Then, for any $\hat{k} \in \mathbb{N}$ and any $V_1(\cdot)$, $V_2(\cdot) \in \C([0, T], \mathbb{R}^{\hat{k} \times k})$, the family of functions
            \begin{equation*}
                \hat{v}(\tau \mid t, w(\cdot))
                \doteq V_1(\tau) v_1(\tau \mid t, w(\cdot)) + V_2(\tau) v_2(\tau \mid t, w(\cdot))
                \quad \forall \tau \in [t, T] \quad \forall (t, w(\cdot)) \in \mathcal{G}
            \end{equation*}
            satisfies Condition $(\star)$ {\rm(}in which we should accordingly replace $k$ with $\hat{k}${\rm)} with the parameters $(\hat{\beta}, \hat{\gamma}) \doteq (\beta_1 \vee \beta_2, \gamma_1 \vee \gamma_2)$ and with
            \begin{equation*}
                \begin{split}
                    \partial_t^\alpha \hat{v}(\tau \mid t, w(\cdot))
                    & \doteq (T - t)^{\hat{\beta} - \beta_1} (\tau - t)^{\hat{\gamma} - \gamma_1} V_1(\tau) \partial_t^\alpha v_1(\tau \mid t, w(\cdot)) \\
                    & \quad + (T - t)^{\hat{\beta} - \beta_2} (\tau - t)^{\hat{\gamma} - \gamma_2} V_2(\tau) \partial_t^\alpha v_2(\tau \mid t, w(\cdot))
                \end{split}
            \end{equation*}
            and
            \begin{equation*}
                \begin{split}
                    \nabla^\alpha \hat{v}(\tau \mid t, w(\cdot))
                    & \doteq (T - t)^{\hat{\beta} - \beta_1} (\tau - t)^{\hat{\gamma} - \gamma_1} V_1(\tau) \nabla^\alpha v_1(\tau \mid t, w(\cdot)) \\
                    & \quad + (T - t)^{\hat{\beta} - \beta_2} (\tau - t)^{\hat{\gamma} - \gamma_2} V_2(\tau) \nabla^\alpha v_2(\tau \mid t, w(\cdot))
                \end{split}
            \end{equation*}
            for all $\tau \in [t, T]$ and all $(t, w(\cdot)) \in \mathcal{G}$.
        \end{proposition}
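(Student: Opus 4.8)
The plan is to verify each of the three defining conditions (i$_\star$), (ii$_\star$), (iii$_\star$) for the family $\hat{v}(\cdot \mid \cdot, \cdot)$ with the proposed $\partial_t^\alpha \hat{v}$ and $\nabla^\alpha \hat{v}$, exploiting that the coefficient matrices $V_1(\cdot)$, $V_2(\cdot)$ depend only on $\tau$ and not on the position $(t, w(\cdot))$, and that they are continuous, hence bounded and uniformly continuous, on $[0, T]$. I set $L \doteq \|V_1(\cdot)\|_{\C([0, T], \mathbb{R}^{\hat{k} \times k})} \vee \|V_2(\cdot)\|_{\C([0, T], \mathbb{R}^{\hat{k} \times k})}$. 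The whole argument rests on the single algebraic identity that rewrites the normalizing denominators: since $\hat{\beta} = \beta_1 \vee \beta_2$ and $\hat{\gamma} = \gamma_1 \vee \gamma_2$, for $i \in \{1, 2\}$ one has $\hat{\beta} - \beta_i \geq 0$ and $\hat{\gamma} - \gamma_i \geq 0$, so that
\[
\frac{1}{(T - t)^{\beta_i} (\tau - t)^{\gamma_i}}
= \frac{(T - t)^{\hat{\beta} - \beta_i} (\tau - t)^{\hat{\gamma} - \gamma_i}}{(T - t)^{\hat{\beta}} (\tau - t)^{\hat{\gamma}}},
\]
where the auxiliary factors $(T - t)^{\hat{\beta} - \beta_i} (\tau - t)^{\hat{\gamma} - \gamma_i}$ carry non-negative exponents. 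This last fact is decisive: it guarantees that these factors are bounded (by $T^{\hat{\beta} - \beta_i} T^{\hat{\gamma} - \gamma_i}$) and jointly continuous in $(t, \tau)$ on $\{0 \leq t \leq \tau \leq T\}$ rather than singular.

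For (i$_\star$) and (ii$_\star$), I would fix a compact set $\mathcal{G}_\ast \subset \mathcal{G}$ and observe that $\hat{v}$ and each candidate derivative are finite sums of products of three factors: the fixed continuous matrix $V_i(\tau)$, the bounded equicontinuous scalar factor $(T - t)^{\hat{\beta} - \beta_i} (\tau - t)^{\hat{\gamma} - \gamma_i}$, and the corresponding object $v_i$, $\partial_t^\alpha v_i$, or $\nabla^\alpha v_i$. Uniform boundedness is then immediate. Both equicontinuity in $\tau$ (condition (i$_\star$)) and continuity in the position (condition (ii$_\star$)) follow from the standard product estimate $\|f_2 g_2 - f_1 g_1\| \leq \|f_2\| \, \|g_2 - g_1\| + \|g_1\| \, \|f_2 - f_1\|$, applied with $f$ the scalar factor and $g$ the $v_i$-object. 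Here I use the moduli $\omega_{v_i}^{(1)}$, $\omega_{v_i}^{(2)}$ supplied by Condition $(\star)$ for $v_i$, the uniform continuity of the scalar factors in $(t, \tau)$ (the power functions with exponent in $[0, 1)$ are uniformly Hölder continuous, and for (ii$_\star$) one notes $|t' - t| \leq \dist((t', w'(\cdot)), (t, w(\cdot)))$), and boundedness of the remaining factors; summing over the two indices $i$ produces the required single modulus.

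For (iii$_\star$), fix $(t, w(\cdot)) \in \mathcal{G}_0$, $x(\cdot) \in \mathcal{X}(t, w(\cdot))$, and $\vartheta \in (t, T)$, and let $z(\cdot)$ be as in \eqref{z}. Because $V_i(\tau)$ is position-independent,
\[
\hat{v}(\tau \mid t', x_{t'}(\cdot)) - \hat{v}(\tau \mid t, w(\cdot))
= \sum_{i = 1}^{2} V_i(\tau) \bigl( v_i(\tau \mid t', x_{t'}(\cdot)) - v_i(\tau \mid t, w(\cdot)) \bigr).
\]
I would apply (iii$_\star$) for $v_i$ to each summand, multiply the principal part by $V_i(\tau)$, and use the displayed identity to replace $(T - t)^{\beta_i} (\tau - t)^{\gamma_i}$ by $(T - t)^{\hat{\beta}} (\tau - t)^{\hat{\gamma}}$; collecting the coefficients of $(t' - t)$ and of $z(t')$ reproduces exactly the $\partial_t^\alpha \hat{v}$ and $\nabla^\alpha \hat{v}$ declared in the statement. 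The leftover is $\sum_i V_i(\tau) r_i(\tau, t')$ with $\|r_i(\tau, t')\|_{\mathbb{R}^k} \leq \omega_{v_i}^\ast(t' - t) / (\tau - t')^{\varkappa_{v_i}}$. Setting $\varkappa_{\hat{v}} \doteq \varkappa_{v_1} \vee \varkappa_{v_2} \in [0, 1)$ and using $(\tau - t')^{-\varkappa_{v_i}} = (\tau - t')^{\varkappa_{\hat{v}} - \varkappa_{v_i}} (\tau - t')^{-\varkappa_{\hat{v}}} \leq T^{\varkappa_{\hat{v}} - \varkappa_{v_i}} (\tau - t')^{-\varkappa_{\hat{v}}}$ (valid since the exponent is non-negative and $\tau - t' \leq T$), this leftover is dominated by $\omega_{\hat{v}}^\ast(t' - t) / (\tau - t')^{\varkappa_{\hat{v}}}$ with the infinitesimal modulus $\omega_{\hat{v}}^\ast \doteq L \bigl( T^{\varkappa_{\hat{v}} - \varkappa_{v_1}} \omega_{v_1}^\ast + T^{\varkappa_{\hat{v}} - \varkappa_{v_2}} \omega_{v_2}^\ast \bigr)$.

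The verification is essentially mechanical, and the only point demanding care --- the \emph{main obstacle} in an otherwise routine argument --- is the systematic bookkeeping of the non-negativity of the exponents $\hat{\beta} - \beta_i$, $\hat{\gamma} - \gamma_i$, and $\varkappa_{\hat{v}} - \varkappa_{v_i}$. It is precisely this non-negativity that prevents the introduction of new singularities when passing to the larger parameters $(\hat{\beta}, \hat{\gamma})$ and the larger exponent $\varkappa_{\hat{v}}$, and hence that lets the boundedness, equicontinuity, and remainder estimates close. All remaining steps are triangle-inequality and product bounds that I would regard as standard, which is why the authors can state that the proposition follows directly from the definition.
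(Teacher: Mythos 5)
Your proof is correct and takes exactly the approach the paper intends: the paper omits the proof of this proposition, stating that it follows directly from the definition of Condition $(\star)$, and your argument is precisely that direct verification carried out in detail. The decomposition into the denominator-rewriting identity (resting on the non-negativity of the exponents $\hat{\beta} - \beta_i$, $\hat{\gamma} - \gamma_i$, and $\varkappa_{\hat{v}} - \varkappa_{v_i}$) plus standard product and triangle-inequality estimates is the intended reasoning, and each of the three conditions (i$_\star$), (ii$_\star$), (iii$_\star$) is checked correctly.
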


        \begin{proposition} \label{proposition_v_T}
            Let a family of functions $v(\cdot \mid \cdot, \cdot)$ satisfy Condition $(\star)$ with parameters $(\beta, \gamma) \in [0, \infty) \times [0, 1)$.
            Then, the family of functions
            \begin{equation*}
                \hat{v}(\tau \mid t, w(\cdot))
                \doteq v(T \mid t, w(\cdot))
                \quad \forall \tau \in [t, T] \quad \forall (t, w(\cdot)) \in \mathcal{G}
            \end{equation*}
            satisfies Condition $(\star)$ with the parameters $(\beta + \gamma, 0)$ and with
            \begin{equation*}
                \partial_t^\alpha \hat{v}(\tau \mid t, w(\cdot))
                \doteq \partial_t^\alpha v(T \mid t, w(\cdot)),
                \quad \nabla^\alpha \hat{v}(\tau \mid t, w(\cdot))
                \doteq \nabla^\alpha v(T \mid t, w(\cdot))
            \end{equation*}
            for all $\tau \in [t, T]$ and all $(t, w(\cdot)) \in \mathcal{G}$.
        \end{proposition}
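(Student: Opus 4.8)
The plan is to verify the three parts (i$_\star$), (ii$_\star$), (iii$_\star$) of Condition $(\star)$ for the family $\hat{v}(\cdot \mid \cdot, \cdot)$ by evaluating the corresponding conditions for $v(\cdot \mid \cdot, \cdot)$ at the terminal instant $\tau = T$. The whole point is that $\hat{v}(\tau \mid t, w(\cdot)) = v(T \mid t, w(\cdot))$, and likewise the proposed $ci$-derivatives $\partial_t^\alpha \hat{v}(\tau \mid t, w(\cdot)) = \partial_t^\alpha v(T \mid t, w(\cdot))$ and $\nabla^\alpha \hat{v}(\tau \mid t, w(\cdot)) = \nabla^\alpha v(T \mid t, w(\cdot))$ do not depend on $\tau$, so that every $\tau$-singularity of $v$ is frozen at $\tau = T$ and disappears. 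This is exactly what makes the new family enjoy parameter $\gamma = 0$.

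First I would dispose of (i$_\star$). Since $\hat{v}$ and its proposed derivatives coincide with the respective objects for $v$ evaluated at $\tau = T$, the bound by $\mu_v$ for $v$ at $\tau = T$ immediately yields the same bound for $\hat{v}$; equicontinuity in $\tau$ is trivial because $\hat{v}$ is constant in $\tau$, so one may retain $\omega_v^{(1)}(\cdot)$ (or even take the zero modulus). For (ii$_\star$) I would apply the corresponding estimate for $v$ with $\tau$ replaced by $T$: this is legitimate since $T \in [t \vee t^\prime, T]$ for every pair of points of $\mathcal{G}_\ast$, and it gives the required inequality for $\hat{v}$ with the same modulus $\omega_v^{(2)}(\cdot)$.

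The only part that really uses the parameter shift $(\beta, \gamma) \mapsto (\beta + \gamma, 0)$ is (iii$_\star$), and this is where the bookkeeping matters. Forming the $\hat{v}$-difference and inserting the proposed derivatives, the normalizing factor for the new family is $(T - t)^{\beta + \gamma} (\tau - t)^0 = (T - t)^{\beta + \gamma}$, which is precisely the factor $(T - t)^\beta (T - t)^\gamma$ produced by condition (iii$_\star$) for $v$ evaluated at $\tau = T$. Hence the whole $\hat{v}$-expression coincides, for every $\tau$, with the left-hand side of (iii$_\star$) for $v$ at $\tau = T$, and is therefore bounded by $\omega_v^\ast(t^\prime - t) / (T - t^\prime)^{\varkappa_v}$.

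It remains to recast this $\tau$-independent bound in the form $\omega_v^\ast(t^\prime - t) / (\tau - t^\prime)^{\varkappa_v}$ demanded for all $\tau \in (t^\prime, T]$. Since the left-hand side does not depend on $\tau$, while the required right-hand side is nonincreasing in $\tau$ (because $\varkappa_v \geq 0$), the most restrictive instance is $\tau = T$, and for that instance the bound has just been established. Concretely, from $(\tau - t^\prime)^{\varkappa_v} \leq (T - t^\prime)^{\varkappa_v}$ one gets $\omega_v^\ast(t^\prime - t) / (T - t^\prime)^{\varkappa_v} \leq \omega_v^\ast(t^\prime - t) / (\tau - t^\prime)^{\varkappa_v}$, so (iii$_\star$) holds for $\hat{v}$ with the same $\varkappa_v \in [0, 1)$ and the same infinitesimal modulus $\omega_v^\ast(\cdot)$. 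I do not expect any genuine obstacle here, as the statement indeed follows directly from the definition; the only point needing care is the exponent bookkeeping so that the passage from $\tau = T$ to a general $\tau \in (t^\prime, T]$ is justified, which is exactly the role of the monotonicity just noted.
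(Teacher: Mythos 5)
Your proof is correct and matches the paper's treatment: the paper omits the proof of this proposition, stating that it ``follows directly from the definition,'' and your argument is exactly that direct verification. In particular, your key observation is the right one --- freezing $\tau$ at $T$ turns the normalizing factor $(T-t)^\beta(\tau-t)^\gamma$ into $(T-t)^{\beta+\gamma}$, and the $\tau$-independent bound $\omega_v^\ast(t^\prime-t)/(T-t^\prime)^{\varkappa_v}$ dominates into the required form $\omega_v^\ast(t^\prime-t)/(\tau-t^\prime)^{\varkappa_v}$ since $\varkappa_v \geq 0$.
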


        \begin{proposition} \label{proposition_v_int_t_T}
            Let a family of functions $v(\cdot \mid \cdot, \cdot)$ satisfy Condition $(\star)$ with parameters $(\beta, \gamma) \in [0, \infty) \times [0, 1 / 2)$ and let a function $W \colon \Omega \to \mathbb{R}^{k \times k}$ be continuous {\rm(}see \eqref{Omega}{\rm)}.
            Then, the family of functions
            \begin{equation*}
                \hat{v}(\tau \mid t, w(\cdot))
                \doteq \int_{\tau}^{T} \frac{W(\xi, \tau) v(\xi \mid t, w(\cdot))}{(\xi - \tau)^\gamma} \, \rd \xi
                \quad \forall \tau \in [t, T] \quad \forall (t, w(\cdot)) \in \mathcal{G}
            \end{equation*}
            satisfies Condition $(\star)$ with the parameters $(\beta, 0)$ and with
            \begin{align*}
                \partial_t^\alpha \hat{v}(\tau \mid t, w(\cdot))
                & \doteq \int_{\tau}^{T} \frac{W(\xi, \tau) \partial_t^\alpha v(\xi \mid t, w(\cdot))}
                {(\xi - \tau)^\gamma (\xi - t)^\gamma} \, \rd \xi, \\
                \nabla^\alpha \hat{v}(\tau \mid t, w(\cdot))
                & \doteq \int_{\tau}^{T} \frac{W(\xi, \tau) \nabla^\alpha v(\xi \mid t, w(\cdot))}
                {(\xi - \tau)^\gamma (\xi - t)^\gamma} \, \rd \xi
            \end{align*}
            for all $\tau \in [t, T]$ and all $(t, w(\cdot)) \in \mathcal{G}$.
        \end{proposition}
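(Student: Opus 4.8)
The plan is to verify, directly from the analogous properties of $v(\cdot \mid \cdot, \cdot)$, that the family $\hat{v}(\cdot \mid \cdot, \cdot)$ together with the candidate derivatives $\partial_t^\alpha \hat{v}$ and $\nabla^\alpha \hat{v}$ stated in the proposition fulfils the three requirements (i$_\star$), (ii$_\star$), (iii$_\star$) of Condition $(\star)$ with parameters $(\beta, 0)$. The hypothesis $\gamma \in [0, 1/2)$ will be used throughout via the inequality $2\gamma < 1$, which makes every kernel below integrable. First I would check absolute convergence and the boundedness part of (i$_\star$): in the integrand defining $\hat{v}$ the kernel is $(\xi - \tau)^{-\gamma}$ with $\gamma < 1$, while in the integrands for $\partial_t^\alpha \hat{v}$ and $\nabla^\alpha \hat{v}$ the extra factor $(\xi - t)^{-\gamma}$ combines with $(\xi - \tau)^{-\gamma}$ and, since $\xi - t \geq \xi - \tau$ for $\tau \geq t$, is dominated by $(\xi - \tau)^{-2\gamma}$ with $2\gamma < 1$. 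Combined with the uniform bound $\mu_v$ from (i$_\star$) for $v$ and the boundedness of $W(\cdot, \cdot)$ on the compact set $\Omega$, this yields the uniform boundedness of $\hat{v}$ and its candidate derivatives over any compact $\mathcal{G}_\ast \subset \mathcal{G}$.

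For the equicontinuity in $\tau$ demanded by (i$_\star$), I would fix $\tau < \tau^\prime$ and perform the change of variables $\xi = \tau + s$ (respectively $\xi = \tau^\prime + s$) so that the weak singularity sits at the fixed point $s = 0$. The difference then splits into a tail integral over $s \in [T - \tau^\prime, T - \tau]$, controlled by $\mu_v$, $\|W(\cdot,\cdot)\|$ and $\int s^{-\gamma} \, \rd s$ (respectively $\int s^{-2\gamma} \, \rd s$ for the derivatives), and a principal integral over $s \in [0, T - \tau^\prime]$ where the difference of the integrands is estimated through the uniform continuity of $W(\cdot,\cdot)$ on $\Omega$ and the equicontinuity moduli $\omega_v^{(1)}(\cdot)$; for the derivative families one must additionally estimate $|(\tau + s - t)^{-\gamma} - (\tau^\prime + s - t)^{-\gamma}|$, the delicate point being uniformity as $\tau, \tau^\prime \to t^+$, which is again secured by $2\gamma < 1$. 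Condition (ii$_\star$) is easier: for $\tau \geq t \vee t^\prime$ one has $\hat{v}(\tau \mid t^\prime, w^\prime(\cdot)) - \hat{v}(\tau \mid t, w(\cdot)) = \int_{\tau}^{T} (\xi - \tau)^{-\gamma} W(\xi, \tau) [ v(\xi \mid t^\prime, w^\prime(\cdot)) - v(\xi \mid t, w(\cdot)) ] \, \rd \xi$, and the bracket is controlled by $\omega_v^{(2)}(\dist(\cdot))$ via (ii$_\star$) for $v$; for the derivative families one writes in addition $(\xi - t^\prime)^{-\gamma} - (\xi - t)^{-\gamma}$ and bounds its singular integral against $(\xi - \tau)^{-\gamma}$ by a modulus of $|t^\prime - t|$.

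The core of the argument is (iii$_\star$). Fix $(t, w(\cdot)) \in \mathcal{G}_0$, $x(\cdot) \in \mathcal{X}(t, w(\cdot))$, $\vartheta \in (t, T)$, and let $z(\cdot)$ be as in \eqref{z}. I would write $\hat{v}(\tau \mid t^\prime, x_{t^\prime}(\cdot)) - \hat{v}(\tau \mid t, w(\cdot)) = \int_{\tau}^{T} (\xi - \tau)^{-\gamma} W(\xi, \tau) [ v(\xi \mid t^\prime, x_{t^\prime}(\cdot)) - v(\xi \mid t, w(\cdot)) ] \, \rd \xi$ and substitute the first-order expansion of $v$ furnished by (iii$_\star$) for $v$, namely $v(\xi \mid t^\prime, x_{t^\prime}(\cdot)) - v(\xi \mid t, w(\cdot)) = (T - t)^{-\beta} (\xi - t)^{-\gamma} [ \partial_t^\alpha v(\xi \mid t, w(\cdot)) (t^\prime - t) + \nabla^\alpha v(\xi \mid t, w(\cdot)) z(t^\prime) ] + r(\xi)$ with $\| r(\xi) \| \leq \omega_v^\ast(t^\prime - t) (\xi - t^\prime)^{-\varkappa_v}$. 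The leading term integrates to exactly $(T - t)^{-\beta} [ \partial_t^\alpha \hat{v}(\tau \mid t, w(\cdot)) (t^\prime - t) + \nabla^\alpha \hat{v}(\tau \mid t, w(\cdot)) z(t^\prime) ]$, which is the required principal part; note that no factor $(\tau - t)^{-\gamma}$ survives the integration, so the output exponent in the second slot is indeed $0$. It then remains to bound the remainder $\int_{\tau}^{T} (\xi - \tau)^{-\gamma} W(\xi, \tau) r(\xi) \, \rd \xi$, for which I would split $[\tau, T]$ at $\xi = \tau + (\tau - t^\prime)$, using $\xi - t^\prime \geq \tau - t^\prime$ on the near part and $\xi - t^\prime \geq \xi - \tau$ on the far part; this gives $\int_{\tau}^{T} (\xi - \tau)^{-\gamma} (\xi - t^\prime)^{-\varkappa_v} \, \rd \xi \leq C (\tau - t^\prime)^{-\varkappa}$ with $\varkappa \doteq (\gamma + \varkappa_v - 1) \vee 0$, whence the remainder is at most $C \|W(\cdot,\cdot)\| \omega_v^\ast(t^\prime - t) (\tau - t^\prime)^{-\varkappa}$ and (iii$_\star$) follows with $\omega_{\hat{v}}^\ast(\cdot) \doteq C \|W(\cdot,\cdot)\| \omega_v^\ast(\cdot)$.

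I expect the main obstacle to be keeping every estimate uniform over the compact set $\mathcal{G}_\ast$ while simultaneously remaining uniform as the arguments approach the lower endpoint $t$, where several weakly singular kernels collide. The decisive quantitative point is the exponent bookkeeping in the remainder estimate for (iii$_\star$): since $\gamma < 1/2$ and $\varkappa_v < 1$, the resulting exponent $\varkappa = (\gamma + \varkappa_v - 1) \vee 0$ satisfies $\varkappa < 1/2 < 1$, so that the admissible range $[0, 1)$ for the singularity in (iii$_\star$) is genuinely respected. It is precisely at this point that the hypothesis $\gamma \in [0, 1/2)$ enters in an essential way.
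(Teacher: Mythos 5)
Your proposal is correct and follows essentially the same route as the paper: direct verification of (i$_\star$)--(iii$_\star$), with the identical key observation in (iii$_\star$) that the first-order expansion of $v$ furnished by its own condition (iii$_\star$) integrates \emph{exactly} to the proposed principal part of $\hat{v}$, leaving only the remainder to estimate; where the paper invokes its Appendix C continuity lemmas for the shifted weakly singular kernels, you re-derive the same estimates by change of variables, which is a difference of bookkeeping rather than of method. Two small corrections are in order. First, your refined remainder bound $\int_\tau^T (\xi-\tau)^{-\gamma}(\xi-t')^{-\varkappa_v}\,\rd\xi \le C(\tau-t')^{-((\gamma+\varkappa_v-1)\vee 0)}$ fails in the borderline case $\gamma+\varkappa_v=1$, where a logarithm appears; the paper's cruder bound, using $\xi-t'\ge\tau-t'$ on all of $[\tau,T]$, gives $T^{1-\gamma}(1-\gamma)^{-1}\,(\tau-t')^{-\varkappa_v}$, keeping the exponent $\varkappa_v<1$, which is all Condition $(\star)$ requires. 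Second, your closing claim that $\gamma\in[0,1/2)$ enters essentially in this exponent bookkeeping is misplaced: any $\gamma<1$ would do there, since $(\gamma+\varkappa_v-1)\vee 0<1$ automatically; the hypothesis $2\gamma<1$ is genuinely needed where you first used it, namely for the boundedness and equicontinuity of $\partial_t^\alpha \hat{v}$ and $\nabla^\alpha \hat{v}$, whose kernels $(\xi-\tau)^{-\gamma}(\xi-t)^{-\gamma}$ degenerate to $(\xi-t)^{-2\gamma}$ when $\tau=t$.
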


        \begin{proposition} \label{proposition_v_int_t_ast_t}
            Under the assumptions of Proposition \ref{proposition_v_int_t_T}, the family of functions
            \begin{equation*}
                \hat{v}(\tau \mid t, w(\cdot))
                \doteq \int_{t}^{\tau} \frac{W(\tau, \xi) v(\xi \mid t, w(\cdot))}{(\tau - \xi)^\gamma} \, \rd \xi
                \quad \forall \tau \in [t, T] \quad \forall (t, w(\cdot)) \in \mathcal{G}
            \end{equation*}
            satisfies Condition $(\star)$ with the parameters $(\beta, \gamma)$ and with
            \begin{equation*}
                \begin{split}
                    \partial_t^\alpha \hat{v}(\tau \mid t, w(\cdot))
                    & \doteq (\tau - t)^\gamma \int_{t}^{\tau} \frac{W(\tau, \xi) \partial_t^\alpha v(\xi \mid t, w(\cdot))}
                    {(\tau - \xi)^\gamma (\xi - t)^\gamma} \, \rd \xi \\
                    & \quad - (T - t)^\beta W(\tau, t) v(t \mid t, w(\cdot))
                \end{split}
            \end{equation*}
            and
            \begin{equation*}
                \begin{split}
                    \nabla^\alpha \hat{v}(\tau \mid t, w(\cdot))
                    & \doteq (\tau - t)^\gamma \int_{t}^{\tau} \frac{W(\tau, \xi) \nabla^\alpha v(\xi \mid t, w(\cdot))}
                    {(\tau - \xi)^\gamma (\xi - t)^\gamma} \, \rd \xi
                \end{split}
            \end{equation*}
            for all $\tau \in [t, T]$ and all $(t, w(\cdot)) \in \mathcal{G}$.
        \end{proposition}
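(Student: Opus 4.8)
The plan is to verify the three requirements (i$_\star$), (ii$_\star$), (iii$_\star$) of Condition $(\star)$ for $\hat{v}$ in turn, reading the proposed formulas for $\partial_t^\alpha \hat{v}$ and $\nabla^\alpha \hat{v}$ as the outcome of differentiating under the integral sign together with a boundary contribution produced by the moving lower endpoint $\xi = t$. The assumption $\gamma \in [0, 1/2)$ enters precisely here: it makes the product kernel $(\tau - \xi)^{-\gamma} (\xi - t)^{-\gamma}$ appearing in the claimed derivatives integrable over $\xi \in [t, \tau]$, through the Beta-type identity $\int_{t}^{\tau} (\tau - \xi)^{-\gamma} (\xi - t)^{-\gamma} \, \rd \xi = (\tau - t)^{1 - 2 \gamma} B(1 - \gamma, 1 - \gamma)$, and all the singular integrals below are estimated by such bounds.

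First I would settle (i$_\star$) and (ii$_\star$), which are routine. Uniform boundedness of $\hat{v}$ and of its proposed derivatives over a compact set $\mathcal{G}_\ast \subset \mathcal{G}$ follows from the uniform bound $\mu_v$ for $v$, from continuity (hence boundedness) of $W$ on $\Omega$, and from the Beta-type estimates. For equicontinuity in $\tau$, I write, for $\tau < \tau'$, the increment $\hat{v}(\tau' \mid t, w(\cdot)) - \hat{v}(\tau \mid t, w(\cdot))$ as the short integral $\int_{\tau}^{\tau'}$, which is $O((\tau' - \tau)^{1 - \gamma})$, plus the integral over $[t, \tau]$ of the kernel difference $W(\tau', \xi)(\tau' - \xi)^{-\gamma} - W(\tau, \xi)(\tau - \xi)^{-\gamma}$, which yields a modulus of continuity by the standard estimates for weakly-singular kernels and the equicontinuity of $v$. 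For (ii$_\star$), given two positions with, say, $t \le t'$ and $\tau \ge t'$, I split $\hat{v}(\tau \mid t', w'(\cdot)) - \hat{v}(\tau \mid t, w(\cdot))$ into the integral over the common interval $[t', \tau]$, controlled through $\omega_v^{(2)}(\dist(\cdot, \cdot))$, and the integral over the strip $[t, t']$, bounded by $\mu_v \int_{t}^{t'} (\tau - \xi)^{-\gamma} \, \rd \xi \le C (t' - t)^{1 - \gamma} \le C \, \dist(\cdot, \cdot)^{1 - \gamma}$; the same splitting handles the proposed derivatives.

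The core of the argument is (iii$_\star$). Fixing $(t, w(\cdot)) \in \mathcal{G}_0$, $x(\cdot) \in \mathcal{X}(t, w(\cdot))$, $\vartheta \in (t, T)$, taking $z(\cdot)$ from \eqref{z}, and abbreviating $\Delta v(\xi) \doteq v(\xi \mid t', x_{t'}(\cdot)) - v(\xi \mid t, w(\cdot))$, I would start from
\begin{equation*}
    \hat{v}(\tau \mid t', x_{t'}(\cdot)) - \hat{v}(\tau \mid t, w(\cdot))
    = I_1 - I_2,
\end{equation*}
where $I_1 \doteq \int_{t'}^{\tau} W(\tau, \xi) \Delta v(\xi) (\tau - \xi)^{-\gamma} \, \rd \xi$ and $I_2 \doteq \int_{t}^{t'} W(\tau, \xi) v(\xi \mid t, w(\cdot)) (\tau - \xi)^{-\gamma} \, \rd \xi$. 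Into $I_1$ I insert property (iii$_\star$) for $v$. Its leading term, after extending the range of integration from $[t', \tau]$ back to $[t, \tau]$, reproduces the integral parts of the claimed $\partial_t^\alpha \hat{v}$ and $\nabla^\alpha \hat{v}$ (the prefactor $(\tau - t)^\gamma$ there cancelling against the scaling $(\tau - t)^\gamma$ in the denominator of the target leading term); the error of this extension is the strip integral $\int_{t}^{t'} (\tau - \xi)^{-\gamma} (\xi - t)^{-\gamma} \, \rd \xi \le (\tau - t')^{-\gamma} (t' - t)^{1 - \gamma}/(1 - \gamma)$, which, weighted by the $O(t' - t)$ factors $t' - t$ and $z(t')$, is of order $(t' - t)^{2 - \gamma} (\tau - t')^{-\gamma}$ and hence admissible, while the $v$-remainder integrates against $(\tau - \xi)^{-\gamma}$ to a Beta integral of size $\omega_v^\ast(t' - t) (\tau - t')^{1 - \gamma - \varkappa_v}$, whose singular exponent $\gamma + \varkappa_v - 1 < 1/2$ is admissible. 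The term $I_2$ supplies the boundary contribution of $\partial_t^\alpha \hat{v}$: replacing its integrand by the value at $\xi = t$ gives $I_2 \approx \frac{W(\tau, t) v(t \mid t, w(\cdot))}{(\tau - t)^\gamma}(t' - t)$, which after division by $(T - t)^\beta (\tau - t)^\gamma$ matches the claimed term $-(T - t)^\beta W(\tau, t) v(t \mid t, w(\cdot))$.

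I expect the main obstacle to be the uniform control, over all $\tau \in (t', T]$, of the error in this last approximation, that is of $I_2 - \frac{W(\tau, t) v(t \mid t, w(\cdot))}{(\tau - t)^\gamma}(t' - t)$: as $\tau \downarrow t'$ the kernel $(\tau - \xi)^{-\gamma}$ becomes nearly singular on the strip $\xi \in [t, t']$ and the naive estimate of order $t' - t$ breaks down. I would write this error as $\int_{t}^{t'} \bigl[ W(\tau, \xi) v(\xi \mid t, w(\cdot)) - W(\tau, t) v(t \mid t, w(\cdot)) \bigr] (\tau - \xi)^{-\gamma} \, \rd \xi$ plus $W(\tau, t) v(t \mid t, w(\cdot)) \int_{t}^{t'} \bigl[ (\tau - \xi)^{-\gamma} - (\tau - t)^{-\gamma} \bigr] \, \rd \xi$. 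The first summand is bounded by the equicontinuity modulus of $v$ times $\int_{t}^{t'} (\tau - \xi)^{-\gamma} \, \rd \xi \le (t' - t)(\tau - t')^{-\gamma}$, which is admissible with the infinitesimal modulus $\delta \mapsto \delta \, \omega(\delta)$ and exponent $\gamma$. For the second, decisive summand I would combine the clean uniform bound $\int_{t}^{t'} \bigl[ (\tau - \xi)^{-\gamma} - (\tau - t)^{-\gamma} \bigr] \, \rd \xi \le (t' - t)^{1 - \gamma}/(1 - \gamma)$ with the localizing factor $(\tau - t')^{-\varkappa_0}$ for an arbitrary fixed $\varkappa_0 \in (\gamma, 1)$, reaching the bound $(t' - t)^{1 - \gamma + \varkappa_0}/[(1 - \gamma)(\tau - t')^{\varkappa_0}]$ with infinitesimal modulus $\delta \mapsto \delta^{1 - \gamma + \varkappa_0}$ since $1 - \gamma + \varkappa_0 > 1$. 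Collecting all contributions shows that $\hat{v}$ satisfies (iii$_\star$) with parameters $(\beta, \gamma)$ and the stated $\partial_t^\alpha \hat{v}$, $\nabla^\alpha \hat{v}$, with a final exponent $\varkappa = \max\{\varkappa_0, \gamma + \varkappa_v - 1\} < 1$ and with $\omega^\ast$ a finite sum of infinitesimal moduli.
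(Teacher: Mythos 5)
Your proposal is correct and, in its substance, follows the same route as the paper's proof. For condition (iii$_\star$) your splitting is exactly the paper's: the integral $I_1$ with the leading term of $v$ extended from $[t',\tau]$ back to $[t,\tau]$, the resulting strip error, and the approximation of $I_2$ by $(t'-t)\,W(\tau,t)\,v(t\mid t,w(\cdot))\,(\tau-t)^{-\gamma}$ correspond precisely to the paper's terms $\mathscr{B}_1$, $\mathscr{B}_2$, $\mathscr{B}_3$, and you estimate the first two in the same way (Beta-function bound for the integrated remainder, and the strip bound $(t'-t)^{1-\gamma}(\tau-t')^{-\gamma}/(1-\gamma)$ weighted by the $O(t'-t)$ factors). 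Conditions (i$_\star$) and (ii$_\star$) are indeed routine; the paper packages the weakly-singular equicontinuity facts you invoke into Propositions \ref{proposition_technical_continuity_0} and \ref{proposition_technical_continuity}, and in (ii$_\star$) one must also treat the boundary term $(T-t)^\beta W(\tau,t)v(t\mid t,w(\cdot))$ and the prefactor $(\tau-t)^\gamma$, which your sketch subsumes under ``the same splitting'' (also, your bound on the first summand of the $I_2$ error needs the modulus of $W$ as well as that of $v$, as in the paper).

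The one genuine divergence is the ``decisive summand'' $W(\tau,t)v(t\mid t,w(\cdot))\int_t^{t'}\bigl[(\tau-\xi)^{-\gamma}-(\tau-t)^{-\gamma}\bigr]\,\rd\xi$. The paper bounds it by the subadditivity inequality $(\tau-t)^\gamma-(\tau-\xi)^\gamma\le(\xi-t)^\gamma$, yielding $\mu_W\mu_v(t'-t)^{1+\gamma}(\tau-t')^{-2\gamma}$; this is the only spot in this proposition where $\gamma<1/2$ is really used (so that $2\gamma<1$). Your bound $(t'-t)^{1-\gamma+\varkappa_0}/[(1-\gamma)(\tau-t')^{\varkappa_0}]$ is in fact true, but it does \emph{not} follow by ``combining'' the clean uniform bound with the localizing factor: when $\tau-t'>t'-t$ the factor $((t'-t)/(\tau-t'))^{\varkappa_0}$ is less than $1$, so you are asserting something strictly stronger than the uniform bound, and multiplying an upper bound by a quantity below $1$ is not a valid inference. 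The step can be repaired by a two-case argument: if $\tau-t'\le t'-t$, the uniform bound already implies your estimate; if $\tau-t'>t'-t$, the mean value theorem gives the integral $\le\gamma(\tau-t')^{-1-\gamma}(t'-t)^2/2$, which is below your bound since $\gamma(1-\gamma)/2<1$ and $1+\gamma-\varkappa_0>0$. With that step filled in (or simply replaced by the paper's subadditivity trick), your proposal is a complete proof; as a small dividend, your variant delivers an admissible exponent $\varkappa_0<1$ for this term without appealing to $\gamma<1/2$.
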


        \begin{proposition} \label{proposition_v_int_M}
            Let a family of functions $v(\cdot \mid \cdot, \cdot)$ satisfy Condition $(\star)$ with parameters $(\beta, 0)$, where $\beta \geq 0$.
            Suppose that a number $\sigma \in [0, 1)$ and continuous functions $N \colon \Theta \to \mathbb{R}^{k \times k}$ and $\partial_t N \colon \Theta \to \mathbb{R}^{k \times k}$ are given {\rm(}see \eqref{Theta}{\rm)} satisfying the following condition:
            for any $t \in [0, T)$ and any $\vartheta \in (t, T)$, there exists an infinitesimal modulus of continuity $\omega_{N}^\ast(\cdot)$ such that, for any $t^\prime \in (t, \vartheta]$ and any $\tau$, $\xi \in [t^\prime, T]$,
            \begin{equation} \label{omega_N^ast}
                \biggl\| N(\tau, \xi \mid t^\prime) - N(\tau, \xi \mid t)
                - \frac{(t^\prime - t) \partial_t N (\tau, \xi \mid t)}{(T - t)^\sigma} \biggr\|_{\mathbb{R}^{k \times k}}
                \leq \omega_{N}^\ast(t^\prime - t).
            \end{equation}
            Then, the family of functions
            \begin{equation*}
                \hat{v}(\tau \mid t, w(\cdot))
                \doteq \int_{t}^{T} \frac{N(\tau, \xi \mid t) v(\xi \mid t, w(\cdot))}{(T - \xi)^\sigma} \, \rd \xi
                \quad \forall \tau \in [t, T] \quad \forall (t, w(\cdot)) \in \mathcal{G}
            \end{equation*}
            satisfies Condition $(\star)$ with the parameters $(\hat{\beta}, 0)$, where $\hat{\beta} \doteq \beta \vee \sigma$, and with
            \begin{equation*}
                \begin{split}
                    \partial_t^\alpha \hat{v}(\tau \mid t, w(\cdot))
                    & \doteq (T - t)^{\hat{\beta} - \sigma} \int_{t}^{T} \frac{\partial_t N(\tau, \xi \mid t) v(\xi \mid t, w(\cdot))}{(T - \xi)^\sigma} \, \rd \xi \\
                    & \quad + (T - t)^{\hat{\beta} - \beta} \int_{t}^{T} \frac{N(\tau, \xi \mid t) \partial_t^\alpha v(\xi \mid t, w(\cdot))}
                    {(T - \xi)^\sigma} \, \rd \xi \\
                    & \quad - (T - t)^{\hat{\beta} - \sigma} N(\tau, t \mid t) v(t \mid t, w(\cdot))
                \end{split}
            \end{equation*}
            and
            \begin{equation*}
                \begin{split}
                    \nabla^\alpha \hat{v}(\tau \mid t, w(\cdot))
                    & \doteq (T - t)^{\hat{\beta} - \beta} \int_{t}^{T} \frac{N(\tau, \xi \mid t) \nabla^\alpha v(\xi \mid t, w(\cdot))}
                    {(T - \xi)^\sigma} \, \rd \xi
                \end{split}
            \end{equation*}
            for all $\tau \in [t, T]$ and all $(t, w(\cdot)) \in \mathcal{G}$.
        \end{proposition}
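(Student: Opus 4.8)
The plan is to verify the three defining conditions (i$_\star$), (ii$_\star$), (iii$_\star$) of Condition $(\star)$ for the family $\hat{v}(\cdot \mid \cdot, \cdot)$ together with the proposed derivatives, exploiting throughout that $\sigma \in [0, 1)$ makes the weight $(T - \xi)^{-\sigma}$ integrable on $[t, T]$ (with $\int_t^T (T - \xi)^{-\sigma}\,\rd\xi \leq T^{1 - \sigma}/(1 - \sigma)$) and that $N$, $\partial_t N$ are continuous, hence bounded and uniformly continuous, on the compact set $\Theta$. Conditions (i$_\star$) and (ii$_\star$) are essentially routine: uniform boundedness of $\hat{v}$ and of the proposed $\partial_t^\alpha \hat{v}$, $\nabla^\alpha \hat{v}$ over a compact $\mathcal{G}_\ast$ follows by estimating each integrand by a uniform bound on $N$ (or $\partial_t N$) times the bound $\mu_v$ from (i$_\star$) for $v$, times the integrable weight; equicontinuity in $\tau$ and continuity in the position follow by splitting the integrals (writing $\int_t^T = \int_t^{t^\prime} + \int_{t^\prime}^T$ when lower limits differ, and noting $\int_t^{t^\prime}(T - \xi)^{-\sigma}\,\rd\xi$ is controlled by a modulus of $|t^\prime - t|$ since $s \mapsto (T - s)^{1 - \sigma}$ is uniformly continuous) and inserting the uniform continuity of $N$ together with (i$_\star$), (ii$_\star$) for $v$.

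The heart of the proof is (iii$_\star$). Fixing $(t, w(\cdot)) \in \mathcal{G}_0$, $x(\cdot) \in \mathcal{X}(t, w(\cdot))$, and $\vartheta \in (t, T)$, for $t^\prime \in (t, \vartheta]$ and $\tau \in (t^\prime, T]$ I would write
\begin{align*}
    & \hat{v}(\tau \mid t^\prime, x_{t^\prime}(\cdot)) - \hat{v}(\tau \mid t, w(\cdot)) \\
    & = \int_{t^\prime}^{T} \frac{N(\tau, \xi \mid t^\prime) v(\xi \mid t^\prime, x_{t^\prime}(\cdot)) - N(\tau, \xi \mid t) v(\xi \mid t, w(\cdot))}{(T - \xi)^\sigma} \, \rd\xi \\
    & \quad - \int_{t}^{t^\prime} \frac{N(\tau, \xi \mid t) v(\xi \mid t, w(\cdot))}{(T - \xi)^\sigma} \, \rd\xi.
\end{align*}
In the first integral I would split the numerator as $[N(\tau, \xi \mid t^\prime) - N(\tau, \xi \mid t)] v(\xi \mid t^\prime, x_{t^\prime}(\cdot)) + N(\tau, \xi \mid t)[v(\xi \mid t^\prime, x_{t^\prime}(\cdot)) - v(\xi \mid t, w(\cdot))]$, then insert the first-order expansion \eqref{omega_N^ast} of $N$ into the first bracket and the expansion from (iii$_\star$) for $v$ (recall $v$ has $\gamma = 0$, so its denominator is $(T - t)^\beta$) into the second. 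The second, boundary, integral over $[t, t^\prime]$ I would expand by replacing $N(\tau, \xi \mid t) v(\xi \mid t, w(\cdot))$ and $(T - \xi)^\sigma$ by their values at $\xi = t$, producing $(t^\prime - t) N(\tau, t \mid t) v(t \mid t, w(\cdot)) / (T - t)^\sigma$ up to an $o(t^\prime - t)$ error.

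Collecting the leading contributions and dividing by $(T - t)^{\hat{\beta}}$ (with $\hat{\beta} = \beta \vee \sigma$, so $\hat{\beta} - \sigma \geq 0$ and $\hat{\beta} - \beta \geq 0$, which is exactly what keeps the derivatives bounded for (i$_\star$)) reproduces the target $(\partial_t^\alpha \hat{v}(\tau \mid t, w(\cdot))(t^\prime - t) + \nabla^\alpha \hat{v}(\tau \mid t, w(\cdot)) z(t^\prime))/(T - t)^{\hat{\beta}}$: the $\partial_t N$ contribution gives the first summand of the claimed $\partial_t^\alpha \hat{v}$, the boundary integral gives its third (negative) summand, and the $\partial_t^\alpha v$ and $\nabla^\alpha v$ contributions give its second summand and the claimed $\nabla^\alpha \hat{v}$, respectively; replacing $\int_{t^\prime}^T$ by $\int_t^T$ in these leading terms costs only $O((t^\prime - t)^2) = o(t^\prime - t)$, since the omitted integrands are bounded and $\|z(t^\prime)\|_{\mathbb{R}^n} \leq (t^\prime - t) \esssup{\eta \in [t, T]} \|(^C D^\alpha x)(\eta)\|_{\mathbb{R}^n}$. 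The hard part, requiring genuine care, is the control of the remainder from the $v$-expansion: its bound $\omega_v^\ast(t^\prime - t)/(\xi - t^\prime)^{\varkappa_v}$ must be integrated against $(T - \xi)^{-\sigma}$, i.e. one must bound the Beta-type integral $\int_{t^\prime}^T (\xi - t^\prime)^{-\varkappa_v}(T - \xi)^{-\sigma}\,\rd\xi$, which is finite and, for $t^\prime \leq \vartheta$, bounded precisely because $\varkappa_v < 1$ and $\sigma < 1$. Together with the $O(\omega_N^\ast(t^\prime - t))$ remainder from the $N$-expansion (integrable since $\sigma < 1$) and the uniform boundedness on $\Theta$ of all $\tau$-dependent factors $N(\tau, \cdot \mid \cdot)$, $\partial_t N(\tau, \cdot \mid \cdot)$, this shows the total remainder is bounded by an infinitesimal modulus $\omega_{\hat{v}}^\ast(t^\prime - t)$ uniformly in $\tau$, so that (iii$_\star$) holds with $\varkappa_{\hat{v}} = 0$, completing the verification.
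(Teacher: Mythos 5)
Your proposal is correct and takes essentially the same route as the paper's proof: the same splitting of $\hat{v}(\tau \mid t^\prime, x_{t^\prime}(\cdot)) - \hat{v}(\tau \mid t, w(\cdot))$ into the integral over $[t^\prime, T]$ (with the numerator decomposed into an $N$-difference handled by \eqref{omega_N^ast} and a $v$-difference handled by condition (iii$_\star$) for $v$) plus the boundary integral over $[t, t^\prime]$ yielding the $-N(\tau, t \mid t) v(t \mid t, w(\cdot))$ summand, the same Beta-function bound $\mathrm{B}(1 - \sigma, 1 - \varkappa_v)$ for the $v$-remainder, the same $O((t^\prime - t)^2)$ accounting for changing the integration domain, and the same conclusion that $\varkappa_{\hat{v}} = 0$. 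The only step left implicit is that replacing $v(\cdot \mid t^\prime, x_{t^\prime}(\cdot))$ by $v(\cdot \mid t, w(\cdot))$ in the leading $\partial_t N$ term costs $(t^\prime - t)\, \omega_v^{(2)}(\dist) = o(t^\prime - t)$ by (ii$_\star$), which is exactly how the paper treats the corresponding piece of its estimate.
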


        Let us proceed with consideration of two types of $ci$-smooth of order $\alpha$ quadratic functionals and providing rules for calculation their $ci$-derivatives of order $\alpha$.
        The proofs of Lemmas \ref{lemma_terminal_functional} and \ref{lemma_integral_functional} below are relegated to Appendix \ref{appendix_2}.

        \begin{lemma} \label{lemma_terminal_functional}
            Let families of functions $v_1(\cdot \mid \cdot, \cdot)$, $v_2(\cdot \mid \cdot, \cdot)$ satisfy Condition $(\star)$ with parameters $(\beta_1, \gamma_1)$, $(\beta_2, \gamma_2) \in [0, \infty) \times [0, 1)$, respectively.
            Then, the functional
            \begin{equation*}
                \varphi(t, w(\cdot))
                \doteq \langle v_1(T \mid t, w(\cdot)), v_2(T \mid t, w(\cdot)) \rangle_{\mathbb{R}^k}
                \quad \forall (t, w(\cdot)) \in \mathcal{G}
            \end{equation*}
            is $ci$-smooth of order $\alpha$ and its $ci$-derivatives of order $\alpha$ are given by
            \begin{align}
                \label{varphi_ci_derivative_t}
                \partial_t^\alpha \varphi(t, w(\cdot))
                & = \frac{\langle \partial_t^\alpha v_1(T), v_2(T) \rangle_{\mathbb{R}^k}}
                {(T - t)^{\beta_1 + \gamma_1}}
                + \frac{\langle \partial_t^\alpha v_2(T), v_1(T) \rangle_{\mathbb{R}^k}}
                {(T - t)^{\beta_2 + \gamma_2}}, \\
                \label{varphi_ci_gradient}
                \nabla^\alpha \varphi(t, w(\cdot))
                & = \frac{\nabla^\alpha v_1(T)^\top v_2(T)}
                {(T - t)^{\beta_1 + \gamma_1}}
                + \frac{\nabla^\alpha v_2(T)^\top v_1(T)}
                {(T - t)^{\beta_2 + \gamma_2}}
            \end{align}
            for all $(t, w(\cdot)) \in \mathcal{G}_0$, where the following notation is used with $i \in \{1, 2\}$:
            \begin{equation} \label{v_i_notation_lemma}
                v_i(\cdot)
                \doteq v_i(\cdot \mid t, w(\cdot)),
                \ \, \partial_t^\alpha v_i(\cdot)
                \doteq \partial_t^\alpha v_i(\cdot \mid t, w(\cdot)),
                \ \, \nabla^\alpha v_i(\cdot)
                \doteq \nabla^\alpha v_i(\cdot \mid t, w(\cdot)).
            \end{equation}
        \end{lemma}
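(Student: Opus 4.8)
The plan is to verify directly the three defining properties of $ci$-smoothness of order $\alpha$: continuity of $\varphi$ on $\mathcal{G}$, $ci$-differentiability of order $\alpha$ at each point of $\mathcal{G}_0$ with the claimed $ci$-derivatives \eqref{varphi_ci_derivative_t}--\eqref{varphi_ci_gradient}, and continuity of $\partial_t^\alpha \varphi$ and $\nabla^\alpha \varphi$ on $\mathcal{G}_0$. The whole argument rests on evaluating the families $v_1$, $v_2$ and their $ci$-data at the \emph{fixed} endpoint $\tau = T$, where the potential singularities carried by Condition $(\star)$ become harmless.

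First I would establish continuity of $\varphi$. For any convergent sequence in $\mathcal{G}$, this sequence together with its limit forms a compact set $\mathcal{G}_\ast \subset \mathcal{G}$, so condition (ii$_\star$) applies with $\tau = T \in [t \vee t', T]$ and gives $\|v_i(T \mid t', w'(\cdot)) - v_i(T \mid t, w(\cdot))\|_{\mathbb{R}^k} \leq \omega_{v_i}^{(2)}\bigl(\dist((t', w'(\cdot)), (t, w(\cdot)))\bigr) \to 0$ for $i \in \{1, 2\}$. Continuity of the inner product then yields continuity of $\varphi$. The same estimate applied to $\partial_t^\alpha v_i(T \mid \cdot, \cdot)$ and $\nabla^\alpha v_i(T \mid \cdot, \cdot)$, combined with continuity of $t \mapsto (T - t)^{-(\beta_i + \gamma_i)}$ on $\mathcal{G}_0$, will give at the end the required continuity of $\partial_t^\alpha \varphi$ and $\nabla^\alpha \varphi$ as finite sums of products of continuous functions.

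The core of the proof is the $ci$-differentiability identity. Fix $(t, w(\cdot)) \in \mathcal{G}_0$, $x(\cdot) \in \mathcal{X}(t, w(\cdot))$, and $\vartheta \in (t, T)$, and write $v_i'(\cdot) \doteq v_i(\cdot \mid t', x_{t'}(\cdot))$ for $t' \in (t, \vartheta]$. Using bilinearity (telescoping) I would split
\begin{equation*}
\varphi(t', x_{t'}(\cdot)) - \varphi(t, w(\cdot))
= \langle v_1'(T) - v_1(T), v_2(T) \rangle_{\mathbb{R}^k}
+ \langle v_1(T), v_2'(T) - v_2(T) \rangle_{\mathbb{R}^k}
+ \langle v_1'(T) - v_1(T), v_2'(T) - v_2(T) \rangle_{\mathbb{R}^k}.
\end{equation*}
Applying condition (iii$_\star$) to each family at $\tau = T$ gives
\begin{equation*}
v_i'(T) - v_i(T)
= \frac{\partial_t^\alpha v_i(T)(t' - t) + \nabla^\alpha v_i(T) z(t')}{(T - t)^{\beta_i + \gamma_i}} + r_i(t'),
\qquad \|r_i(t')\|_{\mathbb{R}^k} \leq \frac{\omega_{v_i}^\ast(t' - t)}{(T - t')^{\varkappa_{v_i}}},
\end{equation*}
with $z(\cdot)$ as in \eqref{z}. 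Since $t' \leq \vartheta < T$, the denominator satisfies $(T - t')^{\varkappa_{v_i}} \geq (T - \vartheta)^{\varkappa_{v_i}} > 0$, so $r_i(t') = o(t' - t)$; moreover $z(t') = O(t' - t)$ because $(^C D^\alpha x)(\cdot) \in \Linf([0, T], \mathbb{R}^n)$, whence each difference $v_i'(T) - v_i(T)$ is $O(t' - t)$ and the last (cross) term above is $O\bigl((t' - t)^2\bigr) = o(t' - t)$. Substituting the expansions into the first two terms, discarding the $r_i$-contributions, and using the identity $\langle \nabla^\alpha v_i(T) z(t'), v_j(T) \rangle_{\mathbb{R}^k} = \langle \nabla^\alpha v_i(T)^\top v_j(T), z(t') \rangle_{\mathbb{R}^n}$, I obtain exactly the increment \eqref{ci-differentiability} with the $ci$-derivatives \eqref{varphi_ci_derivative_t}--\eqref{varphi_ci_gradient}.

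I expect the only delicate point to be the accounting of the remainders: one must check that the singular factor $(T - t')^{-\varkappa_{v_i}}$ coming from (iii$_\star$) is neutralized by the fixed gap $T - \vartheta > 0$ at the endpoint $\tau = T$, and that the quadratic cross term is genuinely $o(t' - t)$; both follow from the boundedness of $z(t')/(t' - t)$ together with the uniform bounds on the $ci$-data guaranteed by (i$_\star$). The remaining continuity of $\partial_t^\alpha \varphi$ and $\nabla^\alpha \varphi$ is then routine from (ii$_\star$) evaluated at $\tau = T$, as indicated above.
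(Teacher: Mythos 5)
Your proof is correct and takes essentially the same route as the paper's: continuity of $\varphi$ and of its $ci$-derivatives follows from condition (ii$_\star$) evaluated at $\tau = T$, and $ci$-differentiability follows from condition (iii$_\star$) at $\tau = T$ combined with the estimate $\|z(t^\prime)\|_{\mathbb{R}^n} \leq \mu_x^\ast (t^\prime - t)$, the singular factor $(T - t^\prime)^{-\varkappa_{v_i}}$ being neutralized by the fixed gap $T - \vartheta > 0$ exactly as you anticipate. The only difference is cosmetic: you telescope symmetrically and absorb the quadratic cross term $\langle v_1^\prime(T) - v_1(T), v_2^\prime(T) - v_2(T)\rangle_{\mathbb{R}^k} = O\bigl((t^\prime - t)^2\bigr)$ into the remainder, whereas the paper telescopes asymmetrically as $\langle v_1^\prime(T) - v_1(T), v_2^\prime(T)\rangle_{\mathbb{R}^k} + \langle v_1(T), v_2^\prime(T) - v_2(T)\rangle_{\mathbb{R}^k}$ and then invokes (i$_\star$) and (ii$_\star$) along the motion to control $v_2^\prime(T)$ and $v_2^\prime(T) - v_2(T)$.
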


        \begin{lemma} \label{lemma_integral_functional}
            Let families of functions $v_1(\cdot \mid \cdot, \cdot)$, $v_2(\cdot \mid \cdot, \cdot)$ satisfy Condition $(\star)$ with parameters $(\beta_1, \gamma_1)$, $(\beta_2, \gamma_2) \in [0, \infty) \times [0, 1)$, respectively, and let a number $\sigma \in [0, 1)$ be given.
            Then, the functional
            \begin{equation*}
                \varphi(t, w(\cdot))
                \doteq \int_{t}^{T} \frac{\langle v_1(\tau \mid t, w(\cdot)),
                v_2(\tau \mid t, w(\cdot)) \rangle_{\mathbb{R}^k}}{(T - \tau)^\sigma} \, \rd \tau
                \quad \forall (t, w(\cdot)) \in \mathcal{G}
            \end{equation*}
            is $ci$-smooth of order $\alpha$ and its $ci$-derivatives of order $\alpha$ are given by
            \begin{equation} \label{varphi_integral_ci_derivative_t}
                \begin{split}
                    \partial_t^\alpha \varphi(t, w(\cdot))
                    & = \int_{t}^{T} \frac{1}{(T - \tau)^\sigma}
                    \biggl( \frac{\langle \partial_t^\alpha v_1(\tau), v_2(\tau) \rangle_{\mathbb{R}^k}}
                    {(T - t)^{\beta_1} (\tau - t)^{\gamma_1}}
                    + \frac{\langle \partial_t^\alpha v_2(\tau), v_1(\tau) \rangle_{\mathbb{R}^k}}
                    {(T - t)^{\beta_2} (\tau - t)^{\gamma_2}} \biggr) \, \rd \tau \\
                    & \quad - \frac{\langle v_1(t), v_2(t) \rangle_{\mathbb{R}^k}}{(T - t)^\sigma}
                \end{split}
            \end{equation}
            and
            \begin{equation} \label{varphi_integral_ci_gradient}
                    \nabla^\alpha \varphi(t, w(\cdot))
                    =  \int_{t}^{T} \frac{1}{(T - \tau)^\sigma}
                    \biggl( \frac{\nabla^\alpha v_1(\tau)^\top v_2(\tau)}{(T - t)^{\beta_1} (\tau - t)^{\gamma_1}}
                    + \frac{\nabla^\alpha v_2(\tau)^\top v_1(\tau)}
                    {(T - t)^{\beta_2} (\tau - t)^{\gamma_2}} \biggr) \, \rd \tau
            \end{equation}
            for all $(t, w(\cdot)) \in \mathcal{G}_0$, where the shorthand notation \eqref{v_i_notation_lemma} is used.
        \end{lemma}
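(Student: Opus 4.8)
The plan is to verify $ci$-smoothness of order $\alpha$ directly from the definition \eqref{ci-differentiability}. I fix a point $(t, w(\cdot)) \in \mathcal{G}_0$, an extension $x(\cdot) \in \mathcal{X}(t, w(\cdot))$, and $\vartheta \in (t, T)$, and study the increment $\varphi(t', x_{t'}(\cdot)) - \varphi(t, w(\cdot))$ for $t' \in (t, \vartheta]$, abbreviating $v_i(\tau) \doteq v_i(\tau \mid t, w(\cdot))$ as in \eqref{v_i_notation_lemma} and $\widetilde{v}_i(\tau) \doteq v_i(\tau \mid t', x_{t'}(\cdot))$. Splitting $\int_t^T = \int_t^{t'} + \int_{t'}^T$, this increment is the sum of a boundary part $-\int_t^{t'} (T-\tau)^{-\sigma}\langle v_1(\tau), v_2(\tau)\rangle_{\mathbb{R}^k}\, \rd \tau$ and a common part $\int_{t'}^T (T-\tau)^{-\sigma}\bigl(\langle \widetilde{v}_1(\tau), \widetilde{v}_2(\tau)\rangle_{\mathbb{R}^k} - \langle v_1(\tau), v_2(\tau)\rangle_{\mathbb{R}^k}\bigr)\, \rd \tau$. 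By the uniform bounds in (i$_\star$) and $\sigma < 1$, the boundary part equals $-(T-t)^{-\sigma}\langle v_1(t), v_2(t)\rangle_{\mathbb{R}^k}(t'-t) + o(t'-t)$, which supplies the last term of \eqref{varphi_integral_ci_derivative_t}.

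For the common part I would use the asymmetric bilinear splitting $\langle \widetilde{v}_1, \widetilde{v}_2\rangle_{\mathbb{R}^k} - \langle v_1, v_2\rangle_{\mathbb{R}^k} = \langle \widetilde{v}_1 - v_1, \widetilde{v}_2\rangle_{\mathbb{R}^k} + \langle v_1, \widetilde{v}_2 - v_2\rangle_{\mathbb{R}^k}$, in which each term carries exactly one increment and one passive factor bounded via (i$_\star$); this avoids any non-integrable product of two increments. To each increment $\widetilde{v}_i - v_i$ I apply Condition (iii$_\star$), replacing it by its leading term $\bigl(\partial_t^\alpha v_i(\tau)(t'-t) + \nabla^\alpha v_i(\tau) z(t')\bigr) / \bigl((T-t)^{\beta_i}(\tau-t)^{\gamma_i}\bigr)$ plus a remainder bounded by $\omega_{v_i}^\ast(t'-t)/(\tau-t')^{\varkappa_{v_i}}$, where $z$ is given by \eqref{z}. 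Since $(^C D^\alpha x)(\cdot)$ is essentially bounded, $\|z(t')\|_{\mathbb{R}^n} \leq C(t'-t)$, so each leading term is $O(t'-t)$. In the first summand I further replace the passive factor $\widetilde{v}_2$ by $v_2$, incurring an error governed by $\omega_{v_2}^{(2)}\bigl(\dist((t', x_{t'}(\cdot)), (t, w(\cdot)))\bigr)$ through (ii$_\star$), which tends to $0$ as $t' \to t$ since $x_{t'}(\cdot) \to w(\cdot)$ in the metric \eqref{dist}.

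Collecting the leading contributions, the parts proportional to $(t'-t)$ assemble into the integral of \eqref{varphi_integral_ci_derivative_t} and the parts proportional to $z(t')$ into $\langle \nabla^\alpha \varphi(t, w(\cdot)), z(t')\rangle_{\mathbb{R}^n}$ with $\nabla^\alpha \varphi$ as in \eqref{varphi_integral_ci_gradient}; replacing the lower limit $t'$ by $t$ in these integrals alters them by $\int_t^{t'}(\cdots)\, \rd \tau$, which after multiplication by the $O(t'-t)$ factors is absorbed into $o(t'-t)$. The crux, and the main obstacle, is showing that every remainder and swapping error is genuinely $o(t'-t)$. This reduces to the uniform boundedness, for $t' \in (t, \vartheta]$, of the singular integrals $\int_{t'}^T (T-\tau)^{-\sigma}(\tau-t')^{-\varkappa_{v_i}}\, \rd \tau$ and $\int_{t'}^T (T-\tau)^{-\sigma}(\tau-t)^{-\gamma_i}\, \rd \tau$; bounding each by the corresponding integral over $[t, T]$, these are of Beta-function type and finite precisely because $\sigma$, $\gamma_i$, and $\varkappa_{v_i}$ all lie in $[0, 1)$, with the two singularities located at the distinct endpoints $\tau = T$ and $\tau \in \{t', t\}$. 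Multiplying these bounds by $\omega_{v_i}^\ast(t'-t)$, respectively by $O(t'-t)\,\omega_{v_2}^{(2)}(\cdot)$, then yields the desired $o(t'-t)$ estimates.

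It remains to check that $\varphi$, $\partial_t^\alpha \varphi$, and $\nabla^\alpha \varphi$ are continuous (the latter two on $\mathcal{G}_0$, and $\varphi$ on all of $\mathcal{G}$). On any compact subset of $\mathcal{G}_0$, the integrands in \eqref{varphi_integral_ci_derivative_t} and \eqref{varphi_integral_ci_gradient} depend jointly continuously on $(t, w(\cdot))$ by (i$_\star$) and (ii$_\star$) and are dominated by a fixed integrable majorant of the form $C(T-\tau)^{-\sigma}(\tau-t)^{-\gamma}$; after a normalizing substitution such as $\tau = t + (T-t)r$ that pins the moving endpoint and singularity, the dominated convergence theorem gives continuity of the two $ci$-derivatives, and the same argument applied to the integrand of $\varphi$ gives continuity of $\varphi$. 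This establishes that $\varphi$ is $ci$-smooth of order $\alpha$ with the stated $ci$-derivatives \eqref{varphi_integral_ci_derivative_t} and \eqref{varphi_integral_ci_gradient}.
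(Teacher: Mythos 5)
Your proposal is correct, and its core---the $ci$-differentiability computation---is essentially the paper's own argument: the same splitting of the increment into the boundary integral $\int_t^{t^\prime}$ (producing the term $-\langle v_1(t), v_2(t)\rangle_{\mathbb{R}^k}/(T-t)^\sigma$) and the common part $\int_{t^\prime}^T$, the same asymmetric bilinear decomposition $\langle \widetilde{v}_1 - v_1, \widetilde{v}_2\rangle_{\mathbb{R}^k} + \langle v_1, \widetilde{v}_2 - v_2\rangle_{\mathbb{R}^k}$ so that each summand carries exactly one increment, the application of (iii$_\star$) to those increments with the passive-factor swap $\widetilde{v}_2 \to v_2$ controlled by (ii$_\star$), the Beta-function bounds for the kernels $(T-\tau)^{-\sigma}(\tau - t^\prime)^{-\varkappa_{v_i}}$ and $(T-\tau)^{-\sigma}(\tau - t)^{-\gamma_i}$, and the absorption of the lower-limit change $\int_t^{t^\prime}$ into $o(t^\prime - t)$ via the exponent $2-\gamma_i > 1$. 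Where you genuinely deviate is the continuity of $\varphi$, $\partial_t^\alpha\varphi$, and $\nabla^\alpha\varphi$: the paper establishes it through explicit moduli-of-continuity estimates, multiplying the singular integrals by $(T-t)^{\sigma \vee \gamma_i}$ and invoking the technical Proposition \ref{proposition_technical_continuity} together with \eqref{technical_beta}, which yields uniform quantitative continuity on compact sets (machinery shared with the other proofs of Appendix \ref{appendix_2}); your substitution $\tau = t + (T-t)r$ plus dominated convergence is shorter and adequate for the qualitative claim, since sequential continuity suffices in the metric space $\mathcal{G}$ and $T-t$ is bounded away from $0$ on compact subsets of $\mathcal{G}_0$, but it produces no modulus of continuity.

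Two points to tighten. First, for the boundary part you invoke only ``the uniform bounds in (i$_\star$)'': boundedness alone gives $O(t^\prime - t)$, not the stated linear term plus $o(t^\prime - t)$; you need the equicontinuity half of (i$_\star$) (the paper's estimate of $\mathscr{A}_3$ uses $\omega_{v_i}^{(1)}$ explicitly), and likewise the non-integral term $\langle v_1(t), v_2(t)\rangle_{\mathbb{R}^k}/(T-t)^\sigma$ in \eqref{varphi_integral_ci_derivative_t} needs its own (easy) continuity check, which your dominated-convergence argument for the integrals does not cover. Second, all constants $\mu_{v_i}$ and moduli $\omega_{v_i}^{(1)}$, $\omega_{v_i}^{(2)}$ used along the motion must be drawn from (i$_\star$)--(ii$_\star$) applied to the specific compact set $\{(s, x_s(\cdot)) \colon s \in [t, \vartheta]\}$, i.e., the paper's \eqref{G_ast_by_x}; you use this implicitly, and it should be said, since Condition $(\star)$ only provides these data per compact set.
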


    \subsection{Proof of Theorem \ref{theorem_solution}}
    \label{subsection_proof_theorem_solution}
        Let us consider the family of functions $a(\cdot \mid \cdot, \cdot)$ from \eqref{a_definition}.
        By \cite[Lemma 3]{Gomoyunov_2020_DE}, the mapping $\mathcal{G} \ni (t, w(\cdot)) \mapsto a(\cdot \mid t, w(\cdot)) \in \AC^\alpha([0, T], \mathbb{R}^n)$ is continuous.
        Further, let us fix $(t, w(\cdot)) \in \mathcal{G}_0$ and $x(\cdot) \in \mathcal{X}(t, w(\cdot))$ and denote
        \begin{equation} \label{mu_x^ast}
            \mu_x^\ast
            \doteq \esssup{\tau \in [0, T]} \|(^C D^\alpha x)(\tau)\|_{\mathbb{R}^n}.
        \end{equation}
        In particular, for the function $z(\cdot)$ given by \eqref{z}, we obtain the estimate
        \begin{equation} \label{z_estimate}
            \|z(\xi)\|_{\mathbb{R}^n}
            \leq \mu_x^\ast (\xi - t)
            \quad \forall \xi \in [t, T].
        \end{equation}
        Hence, applying the integration by parts formula, we get (see also \cite[Section 12]{Gomoyunov_2020_SIAM})
        \begin{equation*}
            \begin{split}
                & \biggl\| a(\tau \mid t^\prime, x_{t^\prime}(\cdot)) - a(\tau \mid t, w(\cdot))
                - \frac{z(t^\prime)}{\Gamma(\alpha) (\tau - t)^{1 - \alpha}} \biggr\|_{\mathbb{R}^n} \\
                & \leq \frac{1}{\Gamma(\alpha)} \biggl\| \frac{z(t^\prime)}{(\tau - t^\prime)^{1 - \alpha}}
                - \frac{z(t^\prime)}{(\tau - t)^{1 - \alpha}} \biggr\|_{\mathbb{R}^n}
                + \frac{1 - \alpha}{\Gamma(\alpha)} \int_{t}^{t^\prime} \frac{\|z(\xi)\|_{\mathbb{R}^n}}{(\tau - \xi)^{2 - \alpha}} \, \rd \xi \\
                & \leq \frac{2 \mu_x^\ast (t^\prime - t)^{2 - \alpha}}{\Gamma(\alpha) (\tau - t^\prime)^{2 - 2 \alpha}}
                \quad \forall \tau \in (t^\prime, T] \quad \forall t^\prime \in [t, T).
            \end{split}
        \end{equation*}
        Thus, we conclude that the family of functions $a(\cdot \mid \cdot, \cdot)$ (more precisely, the family consisting of restrictions of the functions $a(\cdot \mid t, w(\cdot))$ to $[t, T]$ for all $(t, w(\cdot)) \in \mathcal{G}$) satisfies Condition $(\star)$ with the parameters $(0, 1 - \alpha)$ and with
        \begin{equation*}
            \partial_t^\alpha a(\tau \mid t, w(\cdot))
            \doteq \mathbb{O}_n,
            \quad \nabla^\alpha a(\tau \mid t, w(\cdot))
            \doteq \frac{\mathbb{I}_{n \times n}}{\Gamma(\alpha)}
            \quad \forall \tau \in [t, T] \quad \forall (t, w(\cdot)) \in \mathcal{G}.
        \end{equation*}
        Consequently, by Propositions \ref{proposition_v_linear_combination} and \ref{proposition_v_int_t_ast_t}, the family of functions $b(\cdot \mid \cdot, \cdot)$ from \eqref{b_definition} satisfies Condition $(\star)$ with the parameters $(0, 1 - \alpha)$ and with (see also definition \eqref{fundamental_solution_matrix} of the fundamental solution matrix $\Phi(\cdot, \cdot)$)
        \begin{align*}
            \partial_t^\alpha b(\tau \mid t, w(\cdot))
            & \doteq - \Phi(\tau, t) A(t) a(t \mid t, w(\cdot))
            = - \Phi(\tau, t) A(t) w(t), \\
            \nabla^\alpha b(\tau \mid t, w(\cdot))
            & \doteq \frac{\mathbb{I}_{n \times n}}{\Gamma(\alpha)}
            + \frac{(\tau - t)^{1 - \alpha}}{\Gamma(\alpha)} \int_{t}^{\tau} \frac{\Phi(\tau, \xi) A(\xi)}
            {(\tau - \xi)^{1 - \alpha} (\xi - t)^{1 - \alpha}} \, \rd \xi
            = \Phi(\tau, t)
        \end{align*}
        for all $\tau \in [t, T]$ and all $(t, w(\cdot)) \in \mathcal{G}$.
        Let us observe that
        \begin{equation*}
            \partial_t^\alpha b(\tau \mid t, w(\cdot))
            = - \nabla^\alpha b(\tau \mid t, w(\cdot)) A(t) w(t)
            \quad \forall \tau \in [t, T] \quad  \forall (t, w(\cdot)) \in \mathcal{G}.
        \end{equation*}

        Then, taking Proposition \ref{proposition_v_linear_combination} into account, applying Lemma \ref{lemma_terminal_functional}, and recalling that the matrix $P$ is symmetric, we derive that the functional
        \begin{equation*}
            \varphi_1(t, w(\cdot))
            \doteq \langle b(T \mid t, w(\cdot)), P b(T \mid t, w(\cdot)) \rangle_{\mathbb{R}^n}
            \quad \forall (t, w(\cdot)) \in \mathcal{G}
        \end{equation*}
        is $ci$-smooth of order $\alpha$ and its $ci$-derivatives of order $\alpha$ are as follows:
        \begin{align*}
            \partial_t^\alpha \varphi_1(t, w(\cdot))
            & = - \langle \nabla^\alpha \varphi_1(t, w(\cdot)), A(t) w(t) \rangle_{\mathbb{R}^n}, \\
            \nabla^\alpha \varphi_1(t, w(\cdot))
            & = \frac{2 \Phi(T, t)^\top P b(T \mid t, w(\cdot))}
            {(T - t)^{1 - \alpha}}
            \quad \forall (t, w(\cdot)) \in \mathcal{G}_0.
        \end{align*}
        In addition, by Proposition \ref{proposition_v_linear_combination} and Lemma \ref{lemma_integral_functional}, recalling that the matrices $Q(\tau)$ for all $\tau \in [0, T]$ are symmetric and noting that $b(t \mid t, w(\cdot)) = a(t \mid t, w(\cdot)) = w(t)$ by definition, we obtain that the functional
        \begin{equation*}
            \varphi_2(t, w(\cdot))
            \doteq \int_{t}^{T}
            \langle b(\tau \mid t, w(\cdot)), Q(\tau) b(\tau \mid t, w(\cdot)) \rangle_{\mathbb{R}^n} \, \rd \tau
            \quad \forall (t, w(\cdot)) \in \mathcal{G}
        \end{equation*}
        is $ci$-smooth of order $\alpha$ and
        \begin{align*}
            \partial_t^\alpha \varphi_2(t, w(\cdot))
            & = - \langle \nabla^\alpha \varphi_2(t, w(\cdot)), A(t) w(t) \rangle_{\mathbb{R}^n}
            - \langle w(t), Q(t) w(t) \rangle_{\mathbb{R}^n}, \\
            \nabla^\alpha \varphi_2(t, w(\cdot))
            & = 2 \int_{t}^{T}
            \frac{\Phi(\tau, t)^\top Q(\tau) b(\tau \mid t, w(\cdot))}
            {(\tau - t)^{1 - \alpha}} \, \rd \tau
            \quad \forall (t, w(\cdot)) \in \mathcal{G}_0.
        \end{align*}

        Further, in view of the properties of the family of functions $b(\cdot \mid \cdot, \cdot)$ described above and Propositions \ref{proposition_v_linear_combination}, \ref{proposition_v_T}, and \ref{proposition_v_int_t_T}, the family of functions $c(\cdot \mid \cdot, \cdot)$ from \eqref{c_definition} satisfies Condition $(\star)$ with the parameters $(1 - \alpha, 0)$ and with
        \begin{equation*}
            \partial_t^\alpha c(\tau \mid t, w(\cdot))
            \doteq - \nabla^\alpha c(\tau \mid t, w(\cdot)) A(t) w(t)
        \end{equation*}
        and
        \begin{equation*}
            \begin{split}
                \nabla^\alpha c(\tau \mid t, w(\cdot))
                & \doteq B(\tau)^\top \Phi(T, \tau)^\top P \Phi(T, t) \\
                & \quad + (T - t)^{1 - \alpha} (T - \tau)^{1 - \alpha} B(\tau)^\top \int_{\tau}^{T} \frac{\Phi(\xi, \tau)^\top Q(\xi) \Phi(\xi, t)}
                {(\xi - \tau)^{1 - \alpha} (\xi - t)^{1 - \alpha}} \, \rd \xi
            \end{split}
        \end{equation*}
        for all $\tau \in [t, T]$ and all $(t, w(\cdot)) \in \mathcal{G}$.
        In accordance with \eqref{varphi_main}, let us introduce one more family of functions:
        \begin{equation*}
            d(\tau \mid t, w(\cdot))
            \doteq R(\tau)^{- 1} c(\tau \mid t, w(\cdot))
            + \int_{t}^{T} \frac{M(\tau, \xi \mid t) c(\xi \mid t, w(\cdot))}{(T - \xi)^{2 - 2 \alpha}} \, \rd \xi
        \end{equation*}
        for all $\tau \in [t, T]$ and all $(t, w(\cdot)) \in \mathcal{G}$.
        By Propositions \ref{proposition_v_linear_combination} and \ref{proposition_v_int_M} and due to the properties of the function $M(\cdot, \cdot \mid \cdot)$ established in Proposition \ref{proposition_M}, this family of functions satisfies Condition $(\star)$ with the parameters $(2 - 2 \alpha, 0)$ and with
        \begin{align*}
            \partial_t^\alpha d(\tau \mid t, w(\cdot))
            & \doteq - \nabla^\alpha d(\tau \mid t, w(\cdot)) A(t) w(t)
            - M(\tau, t \mid t) c(t \mid t, w(\cdot)) \\
            & \quad - M(\tau, t \mid t) R(t) \int_{t}^{T} \frac{M(t, \xi \mid t) c(\xi \mid t, w(\cdot))}
            {(T - \xi)^{2 - 2 \alpha}} \, \rd \xi
        \end{align*}
        and
        \begin{align*}
            \nabla^\alpha d(\tau \mid t, w(\cdot))
            & \doteq (T - t)^{1 - \alpha} R(\tau)^{- 1} \nabla^\alpha c(\tau \mid t, w(\cdot)) \\
            & \quad + (T - t)^{1 - \alpha} \int_{t}^{T} \frac{M(\tau, \xi \mid t) \nabla^\alpha c(\xi \mid t, w(\cdot))}
            {(T - \xi)^{2 - 2 \alpha}} \, \rd \xi
        \end{align*}
        for all $\tau \in [t, T]$ and all $(t, w(\cdot)) \in \mathcal{G}$.
        Then, by Lemma \ref{lemma_integral_functional}, the functional
        \begin{equation*}
            \varphi_3(t, w(\cdot))
            \doteq \int_{t}^{T} \frac{\langle c(\tau \mid t, w(\cdot)), d(\tau \mid t, w(\cdot)) \rangle_{\mathbb{R}^m}}
             {(T - \tau)^{2 - 2 \alpha}} \, \rd \tau
             \quad \forall (t, w(\cdot)) \in \mathcal{G}
        \end{equation*}
        is $ci$-smooth of order $\alpha$ and its $ci$-derivatives of order $\alpha$ are as follows:
        \begin{equation*}
            \begin{split}
                & \partial_t^\alpha \varphi_3(t, w(\cdot)) \\
                & = \int_{t}^{T}
                \biggl( \frac{\langle \partial_t^\alpha c(\tau \mid t, w(\cdot)), d(\tau \mid t, w(\cdot)) \rangle_{\mathbb{R}^m}}
                {(T - \tau)^{2 - 2 \alpha} (T - t)^{1 - \alpha}}
                + \frac{\langle \partial_t^\alpha d(\tau \mid t, w(\cdot)), c(\tau \mid t, w(\cdot)) \rangle_{\mathbb{R}^m}}
                {(T - \tau)^{2 - 2 \alpha} (T - t)^{2 - 2 \alpha}} \biggr) \, \rd \tau \\
                & \quad - \frac{\langle c(t \mid t, w(\cdot)), d(t \mid t, w(\cdot)) \rangle_{\mathbb{R}^m}}{(T - t)^{2 - 2 \alpha}}
            \end{split}
        \end{equation*}
        and
        \begin{equation*}
            \begin{split}
                & \nabla^\alpha \varphi_3(t, w(\cdot)) \\
                & = \int_{t}^{T} \biggl( \frac{\nabla^\alpha c(\tau \mid t, w(\cdot))^\top d(\tau \mid t, w(\cdot))}
                {(T - \tau)^{2 - 2 \alpha} (T - t)^{1 - \alpha}}
                + \frac{\nabla^\alpha d(\tau \mid t, w(\cdot))^\top c(\tau \mid t, w(\cdot))}
                {(T - \tau)^{2 - 2 \alpha} (T - t)^{2 - 2 \alpha}} \biggr) \, \rd \tau
            \end{split}
        \end{equation*}
        for all $(t, w(\cdot)) \in \mathcal{G}_0$.
        Moreover, since that the matrices $R(\tau)$ are symmetric for all $\tau \in [0, T]$ and the function $M(\cdot, \cdot \mid \cdot)$ satisfies equality \eqref{M_symmetric}, we derive
        \begin{equation*}
            \partial_t^\alpha \varphi_3(t, w(\cdot))
            = - \langle \nabla^\alpha \varphi_3(t, w(\cdot)), A(t) w(t) \rangle_{\mathbb{R}^n}
            - \frac{\langle d(t \mid t, w(\cdot)), R(t) d(t \mid t, w(\cdot)) \rangle_{\mathbb{R}^m}}
            {(T - t)^{2 - 2 \alpha}}
        \end{equation*}
        and
        \begin{equation*}
            \nabla^\alpha \varphi_3(t, w(\cdot))
            = \frac{2}{(T - t)^{1 - \alpha}} \int_{t}^{T}
            \frac{\nabla^\alpha c(\tau \mid t, w(\cdot))^\top d(\tau \mid t, w(\cdot))}{(T - \tau)^{2 - 2 \alpha}} \, \rd \tau
        \end{equation*}
        for all $(t, w(\cdot)) \in \mathcal{G}_0$.

        As a result, observing that $\varphi = \varphi_1 + \varphi_2 - \varphi_3$ by construction, we obtain that the functional $\varphi$ is $ci$-smooth of order $\alpha$ with the $ci$-derivatives of order $\alpha$ given by
        \begin{equation} \label{partial_t_varphi_main}
            \begin{split}
                \partial_t^\alpha \varphi(t, w(\cdot))
                & = - \langle \nabla^\alpha \varphi(t, w(\cdot)), A(t) w(t) \rangle_{\mathbb{R}^n}
                - \langle w(t), Q(t) w(t) \rangle_{\mathbb{R}^n} \\
                & \quad + \frac{\langle d(t \mid t, w(\cdot)), R(t) d(t \mid t, w(\cdot)) \rangle_{\mathbb{R}^m}}
                {(T - t)^{2 - 2 \alpha}}
            \end{split}
        \end{equation}
        and
        \begin{equation*}
            \nabla^\alpha \varphi(t, w(\cdot))
            = \nabla^\alpha \varphi_1 (t, w(\cdot)) + \nabla^\alpha \varphi_2 (t, w(\cdot))
            - \nabla^\alpha \varphi_3 (t, w(\cdot))
        \end{equation*}
        for all $(t, w(\cdot)) \in \mathcal{G}_0$.

        Further, let $(t, w(\cdot)) \in \mathcal{G}_0$ be fixed.
        We have $B(t)^\top \nabla^\alpha c(\tau \mid t, w(\cdot))^\top = K(t, \tau)$ for all $\tau \in [t, T]$ in view of definition \eqref{K_definition} of the function $K(\cdot, \cdot)$ and equality \eqref{K_symmetric}.
        Hence, using definition \eqref{M_definition} of the function $M(\cdot, \cdot \mid \cdot)$, we get
        \begin{align*}
            B(t)^\top \nabla^\alpha \varphi_3(t, w(\cdot))
            & = \frac{2}{(T - t)^{1 - \alpha}} \int_{t}^{T}
            \frac{K(t, \tau) d(\tau \mid t, w(\cdot))}{(T - \tau)^{2 - 2 \alpha}} \, \rd \tau \\
            & = \frac{2}{(T - t)^{1 - \alpha}} \biggl( \int_{t}^{T}
            \frac{K(t, \tau) R(\tau)^{- 1} c(\tau \mid t, w(\cdot))}{(T - \tau)^{2 - 2 \alpha}} \, \rd \tau \\
            & \quad + \int_{t}^{T} \int_{t}^{T}
            \frac{K(t, \tau) M(\tau, \xi \mid t) c(\xi \mid t, w(\cdot))}
            {(T - \tau)^{2 - 2 \alpha} (T - \xi)^{2 - 2 \alpha}} \, \rd \tau \, \rd \xi \biggr) \\
            & = \frac{- 2 R(t)}{(T - t)^{1 - \alpha}} \int_{t}^{T} \frac{M(t, \xi \mid t) c(\xi \mid t, w(\cdot))}{(T - \xi)^{2 - 2 \alpha}} \, \rd \xi.
        \end{align*}
        Consequently, observing that
        \begin{equation*}
            B(t)^\top \bigl( \nabla^\alpha \varphi_1(t, w(\cdot)) + \nabla^\alpha \varphi_2(t, w(\cdot)) \bigr) \\
            = \frac{2 c(t \mid t, w(\cdot))}{(T - t)^{1 - \alpha}},
        \end{equation*}
        we derive
        \begin{equation} \label{nabla_varphi_main_proof}
            B(t)^\top \nabla^\alpha \varphi (t, w(\cdot))
            = \frac{2 R(t) d(t \mid t, w(\cdot))}{(T - t)^{1 - \alpha}},
        \end{equation}
        which, in particular, yields \eqref{nabla_varphi_main}.
        Moreover, it follows directly from \eqref{partial_t_varphi_main} and \eqref{nabla_varphi_main_proof} that the functional $\varphi$ satisfies the HJB equation \eqref{HJB} at the point $(t, w(\cdot))$.

        Thus, in order to complete the proof, it remains to note that the functional $\varphi$ satisfies the boundary condition \eqref{boundary_condition} since
        \begin{equation*}
            \varphi(T, w(\cdot))
            = \varphi_1(T, w(\cdot))
            = \langle b(T \mid T, w(\cdot)), P b(T \mid T, w(\cdot)) \rangle_{\mathbb{R}^n}
            = \langle w(T), P w(T) \rangle_{\mathbb{R}^n}
        \end{equation*}
        for all $w(\cdot) \in \AC^\alpha([0, T], \mathbb{R}^n)$ by construction.

\section{Optimal feedback control}
\label{section_OFC}

    By the functional $\varphi$ from \eqref{varphi_main}, in accordance with Theorem \ref{theorem_solution} and definition \eqref{Hamiltonian} of the Hamiltonian, let us define a mapping $U^\circ \colon \mathcal{G}_0 \to \mathbb{R}^m$ as follows:
    \begin{equation} \label{U^0}
        \begin{split}
            U^\circ(t, w(\cdot))
            & \doteq - \frac{1}{2} R(t)^{- 1} B(t)^\top \nabla^\alpha \varphi(t, w(\cdot)) \\
            & = - \frac{1}{(T - t)^{1 - \alpha}} \biggl( R(t)^{- 1} c(t)
            + \int_{t}^{T} \frac{M(t, \xi \mid t) c(\xi)}{(T - \xi)^{2 - 2 \alpha}} \, \rd \xi \biggr)
        \end{split}
    \end{equation}
    for all $(t, w(\cdot)) \in \mathcal{G}_0$, where the function $M(\cdot, \cdot \mid \cdot)$ is a solution of the Fredholm integral equation \eqref{M_definition} from Proposition \ref{proposition_M} and the function $c(\cdot) \doteq c(\cdot \mid t, w(\cdot))$ is taken from \eqref{c_definition}.
    In particular, we have (see \eqref{pre_Hamiltonian})
    \begin{equation} \label{U^0_corollary}
        h \bigl( t, w(t), U^\circ(t, w(\cdot)), \nabla^\alpha \varphi(t, w(\cdot)) \bigr)
        = H \bigl( t, w(t), \nabla^\alpha \varphi(t, w(\cdot)) \bigr)
    \end{equation}
    for all $(t, w(\cdot)) \in \mathcal{G}_0$.
    Note that, for any $t \in [0, T)$, the mapping below is linear:
    \begin{equation*}
        \AC^\alpha([0, t], \mathbb{R}^n) \ni w(\cdot) \mapsto U^\circ(t, w(\cdot)).
    \end{equation*}

    In the terminology of the theory of positional differential games (see, e.g., \cite{Krasovskii_Subbotin_1988,Krasovskii_Krasovskii_1995} and also \cite{Gomoyunov_2020_SIAM}), the mapping $U^\circ(\cdot, \cdot)$ is called a {\it positional control strategy}.
    Let us describe how we use the strategy $U^\circ(\cdot, \cdot)$ to generate an admissible control and the corresponding motion of system \eqref{differential_equation}.

    Let a pair $(t, w(\cdot)) \in \mathcal{G}_0$ be given.
    Let us fix a number $\vartheta \in (t, T)$ and consider a partition $\Delta$ of the interval $[t, \vartheta]$:
    \begin{equation*}
        \Delta \doteq \{\tau_j\}_{j \in \overline{1, \ell + 1}},
        \quad \tau_1 = t,
        \quad \tau_j < \tau_{j + 1}
        \quad \forall j \in \overline{1, \ell},
        \quad \tau_{\ell + 1} = \vartheta,
    \end{equation*}
    where $\ell \in \mathbb{N}$.
    Then, we form a piecewise constant control $u^\circ(\cdot) \in \Linf([t, T], \mathbb{R}^m)$ and the motion $x^\circ(\cdot) \doteq x(\cdot \mid t, w(\cdot), u^\circ(\cdot))$ of system \eqref{differential_equation} by the following {\it step-by-step feedback control procedure}:
    at every time $\tau_j$ with $j \in \overline{1, \ell}$, we measure the history $x^\circ_{\tau_j}(\cdot)$ (see \eqref{x_t}) of the motion $x^\circ(\cdot)$ on $[0, \tau_j]$, compute the value $u_j^\circ \doteq U^\circ(\tau_j, x^\circ_{\tau_j}(\cdot))$ by \eqref{U^0}, and then apply the constant control $u^\circ(\tau) \doteq u_j^\circ$ until time $\tau_{j + 1}$, when we take a new measurement of the history.
    In a short form, we have
    \begin{equation} \label{control_law}
        u^\circ(\tau)
        \doteq U^\circ(\tau_j, x^\circ_{\tau_j}(\cdot))
        \quad \forall \tau \in [\tau_j, \tau_{j + 1}) \quad \forall j \in \overline{1, \ell}.
    \end{equation}
    On the remaining (small) interval $[\vartheta, T]$, we put formally
    \begin{equation} \label{control_law_0}
        u^\circ(\tau)
        \doteq \mathbb{O}_{\mathbb{R}^m}
        \quad \forall \tau \in [\vartheta, T].
    \end{equation}
    Let us note that the described control procedure determines $u^\circ(\cdot)$ and $x^\circ(\cdot)$ uniquely.
    In what follows, we denote the obtained control by $u^\circ(\cdot \mid t, w(\cdot), \vartheta, \Delta)$.

    \begin{remark}
        According to \eqref{U^0}, the control strategy $U^\circ(\cdot, \cdot)$, in general, has the singularity $(T - t)^{1 - \alpha}$, which is a consequence of relation \eqref{nabla_varphi_main}.
        In the proposed control procedure, similarly to \cite{Gomoyunov_2020_ACS}, this difficulty is overcome by introducing the parameter $\vartheta$ and, roughly speaking, by shifting the terminal time from $T$ to $\vartheta$.
    \end{remark}

    The main result of the second part of the paper is
    \begin{theorem} \label{theorem_optimal_control}
        The functional $\varphi$ from \eqref{varphi_main} is the value functional of the FLQOCP.
        In addition, for any $(t, w(\cdot)) \in \mathcal{G}_0$ and any $\varepsilon > 0$, there exists a number $\vartheta^\circ \in (t, T)$ such that the following statement holds:
        for any $\vartheta \in [\vartheta^\circ, T)$, there exists a number $\delta > 0$ such that, for any partition $\Delta \doteq \{\tau_j\}_{j \in \overline{1, \ell + 1}}$ of the interval $[t, \vartheta]$ with the diameter $\diam(\Delta) \doteq \max_{j \in \overline{1, \ell}} (\tau_{j + 1} - \tau_j) \leq \delta$, the control $u^\circ(\cdot \mid t, w(\cdot), \vartheta, \Delta)$, which is generated by the procedure described above, is $\varepsilon$-optimal.
    \end{theorem}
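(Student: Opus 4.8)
The plan is to prove that $\varphi = \rho$ on $\mathcal{G}$ and the $\varepsilon$-optimality of the feedback procedure at once, by sandwiching $\rho$ between $\varphi$ (from a verification argument) and $\varphi + \varepsilon$ (from the procedure). The main analytic tool is a Newton--Leibniz formula for $ci$-smooth functionals: since, by Theorem \ref{theorem_solution}, $\varphi$ is $ci$-smooth of order $\alpha$, relation \eqref{ci-differentiability} together with the continuity of $\partial_t^\alpha \varphi$ and $\nabla^\alpha \varphi$ implies (see \cite{Gomoyunov_2020_SIAM}) that, along any $x(\cdot) \in \AC^\alpha([0, T], \mathbb{R}^n)$, the map $\tau \mapsto \varphi(\tau, x_\tau(\cdot))$ is locally absolutely continuous on $[0, T)$ with
\[
    \frac{\rd}{\rd \tau} \varphi(\tau, x_\tau(\cdot))
    = \partial_t^\alpha \varphi(\tau, x_\tau(\cdot))
    + \bigl\langle \nabla^\alpha \varphi(\tau, x_\tau(\cdot)), (^C D^\alpha x)(\tau) \bigr\rangle_{\mathbb{R}^n}
\]
for a.e. $\tau$. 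Substituting the dynamics \eqref{differential_equation}, using the HJB equation \eqref{HJB} to replace $\partial_t^\alpha \varphi$ by $- H$, and recalling \eqref{pre_Hamiltonian}, \eqref{Hamiltonian}, the right-hand side equals $h(\tau, x(\tau), u(\tau), \nabla^\alpha \varphi) - H(\tau, x(\tau), \nabla^\alpha \varphi) - \langle x(\tau), Q(\tau) x(\tau) \rangle_{\mathbb{R}^n} - \langle u(\tau), R(\tau) u(\tau) \rangle_{\mathbb{R}^m}$.

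First I would prove the lower bound $\varphi \leq \rho$. Fixing an admissible $u(\cdot)$ with motion $x(\cdot)$ and any $\vartheta \in (t, T)$, the bracket $h - H \geq 0$ since $H = \inf_u h$, so integrating the identity above over $[t, \vartheta]$ gives $\varphi(t, w(\cdot)) \leq \varphi(\vartheta, x_\vartheta(\cdot)) + \int_t^\vartheta ( \langle x, Q x \rangle_{\mathbb{R}^n} + \langle u, R u \rangle_{\mathbb{R}^m} ) \, \rd \tau$. Letting $\vartheta \to T$ and invoking the continuity of $\varphi$ and the boundary condition \eqref{boundary_condition} yields $\varphi(t, w(\cdot)) \leq J(t, w(\cdot), u(\cdot))$, and taking the infimum over $u(\cdot)$ (see \eqref{value}) gives $\varphi \leq \rho$.

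For the reverse inequality I would run the same computation along the closed-loop motion $x^\circ(\cdot)$ from \eqref{control_law}, \eqref{control_law_0}. On $[\tau_j, \tau_{j+1})$ the control is the constant $u_j^\circ = U^\circ(\tau_j, x^\circ_{\tau_j}(\cdot))$, and by the quadratic dependence of $h$ on $u$ together with \eqref{U^0_corollary} the Hamiltonian gap is $G(\tau) = \langle u_j^\circ - U^\circ(\tau, x^\circ_\tau(\cdot)), R(\tau) ( u_j^\circ - U^\circ(\tau, x^\circ_\tau(\cdot)) ) \rangle_{\mathbb{R}^m} \geq 0$, vanishing at $\tau = \tau_j$. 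Integrating over $[t, \vartheta]$ and adding the terminal and tail contributions gives the exact decomposition $J(t, w(\cdot), u^\circ(\cdot)) = \varphi(t, w(\cdot)) + \int_t^\vartheta G(\tau) \, \rd \tau + E(\vartheta, \Delta)$. Applying the Newton--Leibniz formula and the Hamiltonian once more on $[\vartheta, T]$, where the control vanishes, the $\langle x^\circ, Q x^\circ \rangle_{\mathbb{R}^n}$ terms cancel and, using \eqref{nabla_varphi_main} and \eqref{U^0},
\[
    E(\vartheta, \Delta)
    = \int_{\vartheta}^{T} \bigl\langle U^\circ(\tau, x^\circ_\tau(\cdot)), R(\tau) U^\circ(\tau, x^\circ_\tau(\cdot)) \bigr\rangle_{\mathbb{R}^m} \, \rd \tau
    \geq 0.
\]
It then suffices to make both summands at most $\varepsilon / 2$. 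For fixed $\vartheta < T$ the strategy $U^\circ$ is bounded and continuous on positions with time in $[t, \vartheta]$ (the factor $(T - \cdot)^{-(1 - \alpha)}$ being bounded there and $c(\cdot)$, $M(\cdot, \cdot \mid \cdot)$ being continuous by Propositions \ref{proposition_K} and \ref{proposition_M}), so $\int_t^\vartheta G \to 0$ as $\diam(\Delta) \to 0$, which fixes $\delta$. By \eqref{U^0}, $\| U^\circ(\tau, x^\circ_\tau(\cdot)) \|_{\mathbb{R}^m} \leq C (T - \tau)^{-(1 - \alpha)} \| x^\circ(\cdot) \|_{\C([0, T], \mathbb{R}^n)}$, whence $E(\vartheta, \Delta) \leq C' \| x^\circ(\cdot) \|^2_{\C([0, T], \mathbb{R}^n)} (T - \vartheta)^{2 \alpha - 1}$, which tends to $0$ as $\vartheta \to T$ because $2 \alpha - 1 > 0$. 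Choosing $\vartheta^\circ$ first and then $\delta$ yields $\rho \leq J(t, w(\cdot), u^\circ(\cdot)) \leq \varphi + \varepsilon$; letting $\varepsilon \to 0$ gives $\rho \leq \varphi$, hence $\rho = \varphi$ and the claimed $\varepsilon$-optimality.

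The step I expect to be the main obstacle is the uniform a priori bound $\sup \| x^\circ(\cdot) \|_{\C([0, T], \mathbb{R}^n)} < \infty$ over all admissible $\vartheta \in (t, T)$ and all partitions $\Delta$, which is exactly what makes the smallness of $E(\vartheta, \Delta)$ uniform in $\Delta$ as $\vartheta \to T$. Because $U^\circ$ carries the singularity $(T - \tau)^{-(1 - \alpha)}$, the closed-loop control satisfies $\| u^\circ(\tau) \|_{\mathbb{R}^m} \leq C (T - \tau)^{-(1 - \alpha)} \sup_{s \leq \tau} \| x^\circ(s) \|_{\mathbb{R}^n}$ on $[t, \vartheta]$, so the weakly-singular Volterra equation for $x^\circ(\cdot)$ acquires the doubly-singular kernel $(\tau - \xi)^{-(1 - \alpha)} (T - \xi)^{-(1 - \alpha)}$. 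This kernel is integrable uniformly in $\tau \in [t, T]$ \emph{precisely} because $2 (1 - \alpha) < 1$, i.e. $\alpha > 1 / 2$; a generalized Gronwall inequality then delivers the bound, independent of $\vartheta$ and $\Delta$. The same condition $\alpha > 1 / 2$ guarantees convergence of the integral bounding $E(\vartheta, \Delta)$. The remaining, more routine, point is the uniform smallness of $\int_t^\vartheta G$ for fixed $\vartheta$, for which one uses the equicontinuity of $x^\circ(\cdot)$ on $[0, \vartheta]$ (valid since the control is bounded there for fixed $\vartheta < T$) together with the uniform continuity of $U^\circ$ on the corresponding compact family of positions.
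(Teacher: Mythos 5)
Your proposal is correct, and its overall architecture is the same as the paper's: the lower bound $\varphi \leq \rho$ by a verification argument (the chain rule for $ci$-smooth functionals from \cite{Gomoyunov_2020_SIAM}, the HJB equation, and $h \geq H$), and the upper bound along the closed-loop motion with the same two-parameter scheme (choose $\vartheta^\circ$ first, then $\delta$), the same uniform a priori bound $\|x^\circ_\tau(\cdot)\|_{\C([0,\tau],\mathbb{R}^n)} \leq \mu_x^\circ$ obtained from the doubly singular kernel $(\tau - \xi)^{\alpha - 1}(T - \xi)^{\alpha - 1} \leq (\tau - \xi)^{2\alpha - 2}$ and the fractional Gronwall inequality (this is precisely Proposition \ref{proposition_mu_U} combined with \cite[Lemma 6.19]{Diethelm_2010}), and the same compactness/uniform-continuity argument for the partition error on $[t, \vartheta]$. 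Where you genuinely diverge is the treatment of the two error terms. The paper works with the function $\kappa^\circ$, bounds $\dot{\kappa}^\circ \leq \omega^\circ(\delta)$ via moduli of continuity of $\partial_t^\alpha \varphi$, $\nabla^\alpha \varphi$, and $h$, and controls the tail $|\kappa^\circ(T) - \kappa^\circ(\vartheta)|$ by the separate technical Proposition \ref{proposition_mu_varphi}, which estimates increments of $\varphi$ directly from the explicit formula \eqref{varphi_main}. You instead exploit the linear-quadratic structure to obtain the exact identity $J(t, w(\cdot), u^\circ(\cdot)) = \varphi(t, w(\cdot)) + \int_t^\vartheta G(\tau)\,\rd\tau + \int_\vartheta^T \langle U^\circ, R\, U^\circ \rangle_{\mathbb{R}^m}\,\rd\tau$, with both corrections manifestly nonnegative (completion of squares for $G$; the HJB equation with $u = 0$ on $[\vartheta, T]$ for the tail). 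This is sharper, makes the $\varepsilon$-bookkeeping transparent, and renders Proposition \ref{proposition_mu_varphi} unnecessary. The price is that you must justify the Newton--Leibniz formula on $[\vartheta, T]$ up to the singular endpoint: $\tau \mapsto \varphi(\tau, x^\circ_\tau(\cdot))$ is Lipschitz only on compact subintervals of $[\vartheta, T)$, with derivative blowing up like $(T - \tau)^{2\alpha - 2}$, so you need the limiting step ``integrate on $[\vartheta, \vartheta']$, dominate the derivative by an integrable function, let $\vartheta' \to T^-$ using continuity of $\varphi$ and of $\tau \mapsto (\tau, x^\circ_\tau(\cdot))$'' --- exactly the step the paper sidesteps by its direct increment estimate. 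With that point made explicit (you state it only implicitly), your argument is complete, and the quantifier order in the statement is respected since $\mu_x^\circ$, hence the tail bound, is uniform in $\vartheta$ and $\Delta$.
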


    Before proceeding with the proof of Theorem \ref{theorem_optimal_control}, we establish some estimates.

    Let us note that, according to, e.g., \cite[Section 6, p. 75]{Gomoyunov_Lukoyanov_2021}, we have
    \begin{equation} \label{a_estimate}
        \|a(\tau \mid t, w(\cdot))\|_{\mathbb{R}^n}
        \leq \|w(\cdot)\|_{\C([0, t], \mathbb{R}^n)}
        \quad \forall \tau \in [t, T] \quad \forall (t, w(\cdot)) \in \mathcal{G},
    \end{equation}
    where the function $a(\cdot \mid t, w(\cdot))$ is given by \eqref{a_definition}.

    \begin{proposition} \label{proposition_mu_U}
        There exists a number $\mu_{U^\circ} \geq 0$ such that
        \begin{equation*}
            \|U^\circ(t, w(\cdot))\|_{\mathbb{R}^m}
            \leq \frac{\mu_{U^\circ} \|w(\cdot)\|_{\C([0, t], \mathbb{R}^n)}}{(T - t)^{1 - \alpha}}
            \quad \forall (t, w(\cdot)) \in \mathcal{G}_0.
        \end{equation*}
    \end{proposition}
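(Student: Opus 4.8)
The plan is to read off $U^\circ(t, w(\cdot))$ from its explicit form \eqref{U^0} and to bound each ingredient in the bracket by a constant multiple of $\|w(\cdot)\|_{\C([0, t], \mathbb{R}^n)}$, with every constant independent of $(t, w(\cdot)) \in \mathcal{G}_0$. Since the prefactor $(T - t)^{1 - \alpha}$ is already isolated in \eqref{U^0}, it suffices to show that the bracketed term $R(t)^{-1} c(t) + \int_{t}^{T} (T - \xi)^{2\alpha - 2} M(t, \xi \mid t) c(\xi)\, \rd \xi$ has norm bounded by a constant times $\|w(\cdot)\|_{\C([0, t], \mathbb{R}^n)}$ uniformly in $t$. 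I would achieve this by propagating the elementary estimate \eqref{a_estimate} on $a(\cdot \mid t, w(\cdot))$ through the definitions \eqref{b_definition} and \eqref{c_definition}.

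First I would bound $b$. Because the fundamental solution matrix $\Phi(\cdot, \cdot)$ is continuous on the compact set $\Omega$ and $A(\cdot)$ is continuous on $[0, T]$, both are bounded; combined with the integrability of the weakly-singular kernel $(\tau - \xi)^{\alpha - 1}$ and with \eqref{a_estimate}, definition \eqref{b_definition} yields a uniform estimate $\|b(\tau \mid t, w(\cdot))\|_{\mathbb{R}^n} \leq C_b \|w(\cdot)\|_{\C([0, t], \mathbb{R}^n)}$ for all $\tau \in [t, T]$, where $C_b$ depends only on the data and on $\int_{0}^{T} \xi^{\alpha - 1}\, \rd \xi = T^\alpha / \alpha$. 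Feeding this into \eqref{c_definition}, the terminal summand is controlled by boundedness of $B$, $\Phi$, $P$ together with the $b$-bound, while in the integral summand the prefactor $(T - \tau)^{1 - \alpha}$ cancels against $\int_{\tau}^{T} (\xi - \tau)^{\alpha - 1}\, \rd \xi = \alpha^{-1} (T - \tau)^\alpha$, leaving a factor $\alpha^{-1}(T - \tau) \leq \alpha^{-1} T$. This gives $\|c(\tau \mid t, w(\cdot))\|_{\mathbb{R}^m} \leq C_c \|w(\cdot)\|_{\C([0, t], \mathbb{R}^n)}$ uniformly in $\tau \in [t, T]$.

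It then remains to estimate the bracket. The first summand is handled by $\|R(t)^{-1}\|_{\mathbb{R}^{m \times m}} \leq 1 / \theta$, which follows from the coercivity assumption \eqref{theta}, and by the $c$-bound. For the second summand, $M(\cdot, \cdot \mid \cdot)$ is continuous on the compact set $\Theta$ by Proposition \ref{proposition_M}, hence bounded by some $\mu_M \geq 0$; together with the $c$-bound this reduces matters to controlling $\int_{t}^{T} (T - \xi)^{2\alpha - 2}\, \rd \xi = (2\alpha - 1)^{-1} (T - t)^{2\alpha - 1}$. Consequently the bracket is bounded by $C_c (\theta^{-1} + \mu_M (2\alpha - 1)^{-1} (T - t)^{2\alpha - 1}) \|w(\cdot)\|_{\C([0, t], \mathbb{R}^n)}$, and since $\alpha > 1/2$ the quantity $(T - t)^{2\alpha - 1}$ is uniformly bounded by $T^{2\alpha - 1}$ on $[0, T)$. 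Dividing by $(T - t)^{1 - \alpha}$ gives the claim with $\mu_{U^\circ} \doteq C_c (\theta^{-1} + \mu_M T^{2\alpha - 1} / (2\alpha - 1))$.

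The only genuinely delicate point, and the place where the standing hypothesis $\alpha \in (1/2, 1)$ is indispensable, is the convergence of the weakly-singular integral $\int_{t}^{T} (T - \xi)^{2\alpha - 2}\, \rd \xi$: its integrability at the endpoint $\xi = T$ requires $2 - 2\alpha < 1$, i.e.\ $\alpha > 1/2$, and for $\alpha \leq 1/2$ this term would diverge and the asserted bound would fail. Everything else is a routine chain of uniform estimates for weakly-singular convolutions, all constants being independent of $t$ precisely because the relevant integrals evaluate to explicit powers of $(T - t)$ (or $(T - \tau)$) that are uniformly bounded on $[0, T]$.
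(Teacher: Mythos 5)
Your proposal is correct and follows essentially the same route as the paper's proof: propagate the bound \eqref{a_estimate} through \eqref{b_definition} and \eqref{c_definition} using boundedness of $A$, $B$, $P$, $Q$, $\Phi$, $M$ and the explicit evaluation of the weakly-singular integrals, then control the bracket in \eqref{U^0} via $\|R(\cdot)^{-1}\| \leq 1/\theta$ and $\int_t^T (T-\xi)^{2\alpha-2}\,\rd\xi = (T-t)^{2\alpha-1}/(2\alpha-1)$. The constants you obtain coincide with the paper's $\mu_b$, $\mu_c$, and $\mu_{U^\circ}$ up to notation, and your remark on where $\alpha > 1/2$ is indispensable is accurate though left implicit in the paper.
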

    \begin{proof}
        Recalling that the functions $A(\cdot)$, $B(\cdot)$, $Q(\cdot)$, $\Phi(\cdot, \cdot)$ (see \eqref{fundamental_solution_matrix}), and $M(\cdot, \cdot \mid \cdot)$ (see Proposition \ref{proposition_M}) are continuous, let us denote
        \begin{equation} \label{mu_A_mu_B_mu_Q}
            \mu_A
            \doteq \|A(\cdot)\|_{\C([0, T], \mathbb{R}^{n \times n})},
            \ \mu_B
            \doteq \|B(\cdot)\|_{\C([0, T], \mathbb{R}^{n \times m})},
            \ \mu_Q
            \doteq \|Q(\cdot)\|_{\C([0, T], \mathbb{R}^{n \times n})}
        \end{equation}
        and
        \begin{equation} \label{mu_Phi_mu_M}
            \mu_\Phi
            \doteq \max_{(\tau, \xi) \in \Omega} \|\Phi(\tau, \xi)\|_{\mathbb{R}^{m \times m}},
            \quad \mu_M
            \doteq \max_{(\tau, \xi, t) \in \Theta} \|M(\tau, \xi \mid t)\|_{\mathbb{R}^{m \times m}}.
        \end{equation}
        Then, based on \eqref{a_estimate}, we obtain that the families of functions $b(\cdot \mid \cdot, \cdot)$ and $c(\cdot \mid \cdot, \cdot)$ from \eqref{b_definition} and \eqref{c_definition} satisfy the estimates
        \begin{align}
            \label{mu_b}
            \|b(\tau \mid t, w(\cdot))\|_{\mathbb{R}^n}
            & \leq \mu_b \|w(\cdot)\|_{\C([0, t], \mathbb{R}^n)}, \\
            \label{mu_c}
            \|c(\tau \mid t, w(\cdot))\|_{\mathbb{R}^m}
            & \leq \mu_c \|w(\cdot)\|_{\C([0, t], \mathbb{R}^n)}
        \end{align}
        for all $\tau \in [t, T]$ and all $(t, w(\cdot)) \in \mathcal{G}$ with the numbers
        \begin{equation*}
            \mu_b
            \doteq 1 + \frac{T^\alpha \mu_\Phi \mu_A}{\alpha},
            \quad \mu_c
            \doteq \mu_B \mu_\Phi \mu_b \biggl( \|P\|_{\mathbb{R}^{n \times n}} + \frac{T \mu_Q}{\alpha} \biggr).
        \end{equation*}
        Therefore, according to \eqref{U^0}, and since
        \begin{equation} \label{R^-1_estimate}
            \|R(\cdot)^{- 1}\|_{\C([0, T], \mathbb{R}^{m \times m})}
            \leq \frac{1}{\theta}
        \end{equation}
        due to \eqref{theta}, we derive
        \begin{equation*}
            \|U^\circ(t, w(\cdot))\|_{\mathbb{R}^m}
            \leq \frac{\mu_c \|w(\cdot)\|_{\C([0, t], \mathbb{R}^n)}}{(T - t)^{1 - \alpha}} \biggl( \frac{1}{\theta} + \frac{T^{2 \alpha - 1} \mu_M }{2 \alpha - 1} \biggr)
            \quad \forall (t, w(\cdot)) \in \mathcal{G}_0,
        \end{equation*}
        which completes the proof of the proposition.
    \end{proof}

    \begin{proposition} \label{proposition_mu_varphi}
        There exists a number $\mu_\varphi \geq 0$ such that, for any $\vartheta \in [0, T)$ and any $x(\cdot) \in \AC^\alpha([0, T], \mathbb{R}^n)$, the functional $\varphi$ from \eqref{varphi_main} satisfies the inequality
        \begin{equation*}
            \begin{split}
                & | \varphi(T, x(\cdot)) - \varphi(\vartheta, x_{\vartheta}(\cdot)) | \\
                & \leq (T - \vartheta)^{2 \alpha - 1} \mu_\varphi
                \Bigl( \|x(\cdot)\|_{\C([0, T], \mathbb{R}^n)} + \esssup{\tau \in [\vartheta, T]} \|(^C D^\alpha x)(\tau)\|_{\mathbb{R}^n} \Bigr)
                \|x(\cdot)\|_{\C([0, T], \mathbb{R}^n)}.
            \end{split}
        \end{equation*}
    \end{proposition}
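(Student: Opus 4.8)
The plan is to estimate the difference $\varphi(T, x(\cdot)) - \varphi(\vartheta, x_\vartheta(\cdot))$ termwise by means of formula \eqref{varphi_main}. First I would note that, evaluating \eqref{varphi_main} at $t = T$, both integrals degenerate and $b(T \mid T, x(\cdot)) = x(T)$, so that $\varphi(T, x(\cdot)) = \langle x(T), P x(T) \rangle_{\mathbb{R}^n}$, exactly as in the verification of the boundary condition \eqref{boundary_condition} at the end of Section \ref{subsection_proof_theorem_solution}. Abbreviating $b(\cdot) \doteq b(\cdot \mid \vartheta, x_\vartheta(\cdot))$ and $c(\cdot) \doteq c(\cdot \mid \vartheta, x_\vartheta(\cdot))$, the difference then splits into the terminal part $\langle x(T), P x(T) \rangle_{\mathbb{R}^n} - \langle b(T), P b(T) \rangle_{\mathbb{R}^n}$, the running-state part $-\int_{\vartheta}^{T} \langle b(\tau), Q(\tau) b(\tau) \rangle_{\mathbb{R}^n} \, \rd \tau$, and the control part $\int_{\vartheta}^{T} (T - \tau)^{2 \alpha - 2} \langle c(\tau), d(\tau) \rangle_{\mathbb{R}^m} \, \rd \tau$, where $d(\cdot)$ denotes the inner bracket of \eqref{varphi_main} evaluated with $t = \vartheta$. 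I would bound these three groups separately.

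The main obstacle is the terminal part, which forces a comparison of the genuine terminal state $x(T)$ with the value $b(T)$ of the free (uncontrolled) continuation of the history $x_\vartheta(\cdot)$. To this end I would set $y(\tau) \doteq x(\tau) - b(\tau)$ and observe that $y(\tau) = \mathbb{O}_n$ for $\tau \in [0, \vartheta]$, whereas on $[\vartheta, T]$, since $(^C D^\alpha b)(\tau) = A(\tau) b(\tau)$ by construction of $b(\cdot)$, the function $y$ solves $(^C D^\alpha y)(\tau) = A(\tau) y(\tau) + g(\tau)$ with forcing $g(\tau) \doteq (^C D^\alpha x)(\tau) - A(\tau) x(\tau)$ satisfying $\|g(\tau)\|_{\mathbb{R}^n} \le \esssup{\tau \in [\vartheta, T]} \|(^C D^\alpha x)(\tau)\|_{\mathbb{R}^n} + \mu_A \|x(\cdot)\|_{\C([0, T], \mathbb{R}^n)}$ (see \eqref{mu_A_mu_B_mu_Q}). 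Representing $y$ through the fundamental solution matrix $\Phi(\cdot, \cdot)$, or equivalently applying a weakly-singular Gr\"onwall inequality to the associated Volterra equation, I would obtain
\begin{equation*}
    \|x(\tau) - b(\tau)\|_{\mathbb{R}^n}
    \le C (\tau - \vartheta)^\alpha \Bigl( \esssup{\tau \in [\vartheta, T]} \|(^C D^\alpha x)(\tau)\|_{\mathbb{R}^n} + \|x(\cdot)\|_{\C([0, T], \mathbb{R}^n)} \Bigr)
    \quad \forall \tau \in [\vartheta, T],
\end{equation*}
with a constant $C$ depending only on $\mu_\Phi$, $\mu_A$, $\alpha$, and $T$. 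Taking $\tau = T$, using the symmetry of $P$ through the identity $\langle x(T), P x(T) \rangle_{\mathbb{R}^n} - \langle b(T), P b(T) \rangle_{\mathbb{R}^n} = \langle x(T) - b(T), P (x(T) + b(T)) \rangle_{\mathbb{R}^n}$, and bounding $\|x(T)\|_{\mathbb{R}^n}$, $\|b(T)\|_{\mathbb{R}^n} \le \mu_b \|x(\cdot)\|_{\C([0, T], \mathbb{R}^n)}$ via \eqref{mu_b}, the terminal part is seen to be of order $(T - \vartheta)^\alpha$ times the required factors.

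Finally, for the running-state part I would use $\|b(\tau)\|_{\mathbb{R}^n} \le \mu_b \|x(\cdot)\|_{\C([0, T], \mathbb{R}^n)}$ from \eqref{mu_b}, giving a bound $\mu_Q \mu_b^2 \|x(\cdot)\|_{\C([0, T], \mathbb{R}^n)}^2 (T - \vartheta)$; for the control part I would use $\|c(\tau)\|_{\mathbb{R}^m} \le \mu_c \|x(\cdot)\|_{\C([0, T], \mathbb{R}^n)}$ from \eqref{mu_c} together with \eqref{R^-1_estimate} and $\|M(\tau, \xi \mid \vartheta)\|_{\mathbb{R}^{m \times m}} \le \mu_M$ (see \eqref{mu_Phi_mu_M}), which reduces the control part to the weakly-singular integrals $\int_{\vartheta}^{T} (T - \tau)^{2 \alpha - 2} \, \rd \tau = (T - \vartheta)^{2 \alpha - 1} / (2 \alpha - 1)$ and its square. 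This is precisely where the hypothesis $\alpha > 1 / 2$ enters: it guarantees convergence of these integrals and produces the scaling $(T - \vartheta)^{2 \alpha - 1}$. Since $\alpha \in (1 / 2, 1)$ yields $2 \alpha - 1 < \alpha$, $2 \alpha - 1 < 1$, and $2 \alpha - 1 < 2 (2 \alpha - 1)$, the three contributions --- of orders $(T - \vartheta)^\alpha$, $(T - \vartheta)$, and $(T - \vartheta)^{2 \alpha - 1}$ (plus $(T - \vartheta)^{2 (2 \alpha - 1)}$ from the double integral) --- are all dominated by $(T - \vartheta)^{2 \alpha - 1}$ after factoring out the appropriate nonnegative power of $T - \vartheta \le T$. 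Collecting all constants into a single $\mu_\varphi$ then completes the proof.
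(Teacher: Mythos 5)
Your proposal is correct and uses the same overall decomposition as the paper's proof: both split $\varphi(T, x(\cdot)) - \varphi(\vartheta, x_\vartheta(\cdot))$ into the terminal $P$-term, the $Q$-integral, the $R^{-1}$-integral, and the double $M$-integral, bound the last three exactly as you do (via \eqref{mu_b}, \eqref{mu_c}, \eqref{R^-1_estimate}, the bound $\mu_M$ from \eqref{mu_Phi_mu_M}, and the integral $\int_\vartheta^T (T - \tau)^{2\alpha - 2} \, \rd\tau = (T - \vartheta)^{2\alpha - 1}/(2\alpha - 1)$), and then absorb the resulting exponents $\alpha$, $1$, $2\alpha - 1$, $4\alpha - 2$ into $(T - \vartheta)^{2\alpha - 1}$. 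The one genuine divergence is the terminal term. The paper estimates $\|b(T \mid T, x(\cdot)) - b(T \mid \vartheta, x_\vartheta(\cdot))\|_{\mathbb{R}^n}$ purely algebraically from the definitions \eqref{a_definition} and \eqref{b_definition}: the difference of the $a$-terms equals $\Gamma(\alpha)^{-1} \int_\vartheta^T (T - \xi)^{\alpha - 1} (^C D^\alpha x)(\xi) \, \rd\xi$, bounded by $(T - \vartheta)^\alpha \, \esssup{\tau \in [\vartheta, T]} \|(^C D^\alpha x)(\tau)\|_{\mathbb{R}^n} / \Gamma(\alpha + 1)$, while the extra $\Phi A a$-integral over $[\vartheta, T]$ is bounded by $(T - \vartheta)^\alpha \mu_\Phi \mu_A \|x(\cdot)\|_{\C([0,T],\mathbb{R}^n)} / \alpha$ via \eqref{a_estimate}; no dynamical interpretation is needed. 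You instead identify $b(\cdot \mid \vartheta, x_\vartheta(\cdot))$ with the uncontrolled motion, i.e.\ $(^C D^\alpha b)(\tau) = A(\tau) b(\tau)$ on $[\vartheta, T]$, and run a variation-of-constants/Gr\"onwall argument for $y = x - b$. That route is valid and gives a bound of the same order $(T - \vartheta)^\alpha$, but be aware that the identification of $b$ with the free motion is nowhere stated in this paper --- it is the representation formula from \cite{Gomoyunov_2019_FCAA_2} (see also \cite{Bourdin_2018}) --- so your argument needs that citation as an extra ingredient, together with the observation that $y$ vanishes on $[0, \vartheta]$, so its Caputo derivative based at $0$ reduces to one based at $\vartheta$ and the Volterra equation for $y$ starts at $\vartheta$. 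In exchange, your argument is conceptually more transparent: it makes explicit why only the data of $x(\cdot)$ on $[\vartheta, T]$ (through $\esssup{\tau \in [\vartheta, T]} \|(^C D^\alpha x)(\tau)\|_{\mathbb{R}^n}$) enters the estimate, whereas the paper's computation is shorter and entirely self-contained.
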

    \begin{proof}
        Let $\vartheta \in [0, T)$ and $x(\cdot) \in \AC^\alpha([0, T], \mathbb{R}^n)$ be fixed.
        Let us denote
        \begin{equation*}
            \mu_x
            \doteq \|x(\cdot)\|_{\C([0, T], \mathbb{R}^n)},
            \quad \hat{\mu}_x^\ast
            \doteq \esssup{\tau \in [\vartheta, T]} \|(^C D^\alpha x)(\tau)\|_{\mathbb{R}^n}
        \end{equation*}
        for brevity.
        Then, recalling the notation introduced in \eqref{mu_A_mu_B_mu_Q} and \eqref{mu_Phi_mu_M} and using estimates \eqref{mu_b}, \eqref{mu_c}, and \eqref{R^-1_estimate}, we derive
        \begin{equation*}
            \begin{split}
                & |\varphi(T, x(\cdot)) - \varphi(\vartheta, x_\vartheta(\cdot))| \\
                & \leq \bigl| \langle b(T \mid T, x(\cdot)), P b(T \mid T, x(\cdot)) \rangle_{\mathbb{R}^n}
                - \langle b(T \mid \vartheta, x_\vartheta(\cdot)), P b(T \mid \vartheta, x_\vartheta(\cdot)) \rangle_{\mathbb{R}^n} \bigr| \\
                & \quad + \biggl| \int_{\vartheta}^{T} \langle b(\tau \mid \vartheta, x_\vartheta(\cdot)), Q(\tau) b(\tau \mid \vartheta, x_\vartheta(\cdot)) \rangle_{\mathbb{R}^n} \, \rd \tau \biggr| \\
                & \quad + \biggl| \int_{\vartheta}^{T} \frac{\langle c(\tau \mid \vartheta, x_\vartheta(\cdot)), R(\tau)^{- 1} c(\tau \mid \vartheta, x_\vartheta(\cdot)) \rangle_{\mathbb{R}^m}}{(T - \tau)^{2 - 2 \alpha}} \, \rd \tau \biggr| \\
                & \quad + \biggl| \int_{\vartheta}^{T} \biggl\langle \frac{c(\tau \mid \vartheta, x_\vartheta(\cdot))}{(T - \tau)^{2 - 2 \alpha}},
                \int_{\vartheta}^{T} \frac{M(\tau, \xi \mid \vartheta) c(\xi \mid \vartheta, x_\vartheta(\cdot))}{(T - \xi)^{2 - 2 \alpha}} \, \rd \xi \biggr\rangle_{\mathbb{R}^m} \, \rd \tau \biggr| \\
                & \leq 2 \| b(T \mid T, x(\cdot)) - b(T \mid \vartheta, x_\vartheta(\cdot))\|_{\mathbb{R}^n}
                \|P\|_{\mathbb{R}^{n \times n}} \mu_b \mu_x \\
                & \quad + (T - \vartheta) \mu_b^2 \mu_Q \mu_x^2
                + \frac{(T - \vartheta)^{2 \alpha - 1} \mu_c^2 \mu_x^2}{(2 \alpha - 1) \theta}
                + \frac{(T - \vartheta)^{4 \alpha - 2} \mu_c^2 \mu_M \mu_x^2}{(2 \alpha - 1)^2}.
            \end{split}
        \end{equation*}
        According to \eqref{a_definition}, we have
        \begin{equation*}
            \|a(T \mid T, x(\cdot)) - a(T \mid \vartheta, x_\vartheta(\cdot))\|_{\mathbb{R}^n}
            = \frac{1}{\Gamma(\alpha)} \biggl\| \int_{\vartheta}^{T} \frac{(^C D^\alpha x)(\xi)}{(T - \xi)^{1 - \alpha}} \, \rd \xi \biggr\|_{\mathbb{R}^n}
            \leq \frac{(T - \vartheta)^\alpha \hat{\mu}_x^\ast}{\Gamma(\alpha + 1)},
        \end{equation*}
        and, therefore, in view of \eqref{b_definition} and \eqref{a_estimate}, we get
        \begin{equation*}
            \|b(T \mid T, x(\cdot)) - b(T \mid \vartheta, x_\vartheta(\cdot))\|_{\mathbb{R}^n}
            \leq \frac{(T - \vartheta)^\alpha \hat{\mu}_x^\ast}{\Gamma(\alpha + 1)}
            + \frac{(T - \vartheta)^\alpha \mu_\Phi \mu_A \mu_x}{\alpha}.
        \end{equation*}
        The obtained estimates imply the result.
    \end{proof}

    \begin{proof}[Proof of Theorem \ref{theorem_optimal_control}]
        Let $(t, w(\cdot)) \in \mathcal{G}_0$ be fixed.
        Let us consider an arbitrary control $u(\cdot) \in \Linf([t, T], \mathbb{R}^m)$ and the corresponding motion $x(\cdot) \doteq x(\cdot \mid t, w(\cdot), u(\cdot))$ of system \eqref{differential_equation}.
        Let us introduce the function
        \begin{equation} \label{kappa}
            \kappa(\tau)
            \doteq \varphi(\tau, x_\tau(\cdot))
            - \int_{\tau}^{T} \bigl( \langle x(\xi), Q(\xi) x(\xi) \rangle_{\mathbb{R}^n}
            + \langle u(\xi), R(\xi) u(\xi) \rangle_{\mathbb{R}^m} \bigr) \, \rd \xi
        \end{equation}
        for all $\tau \in [t, T]$.
        Since the functional $\varphi$ is $ci$-smooth of order $\alpha$ by Theorem \ref{theorem_solution} and the inclusion $x(\cdot) \in \mathcal{X}(t, w(\cdot))$ holds by definition, it follows from \cite[Lemma 9.2]{Gomoyunov_2020_SIAM} that the function $\kappa(\cdot)$ is continuous on $[t, T]$ and Lipschitz continuous on $[t, \vartheta]$ for every $\vartheta \in (t, T)$ and
        \begin{equation} \label{kappa_dot}
            \begin{split}
                \dot{\kappa}(\tau)
                & = \partial_t^\alpha \varphi(\tau, x_\tau(\cdot)) + \langle \nabla^\alpha \varphi(\tau, x_\tau(\cdot)), (^C D^\alpha x)(\tau) \rangle_{\mathbb{R}^n} \\
                & \quad + \langle x(\tau), Q(\tau) x(\tau) \rangle_{\mathbb{R}^n}
                + \langle u(\tau), R(\tau) u(\tau) \rangle_{\mathbb{R}^m} \\
                & = \partial_t^\alpha \varphi(\tau, x_\tau(\cdot))
                + h \bigl( \tau, x(\tau), u(\tau), \nabla^\alpha \varphi(\tau, x_\tau(\cdot)) \bigr)
                \quad \text{for a.e. } \tau \in [t, T],
            \end{split}
        \end{equation}
        where $\dot{\kappa}(\tau) \doteq \rd \kappa(\tau) / \rd \tau$ and notation \eqref{pre_Hamiltonian} is used.
        Then, recalling definition \eqref{Hamiltonian} of the Hamiltonian and the fact that the functional $\varphi$ satisfies the HJB equation \eqref{HJB} by Theorem \ref{theorem_solution}, we get
        \begin{equation*}
            \dot{\kappa}(\tau)
            \geq \partial_t^\alpha \varphi(\tau, x_\tau(\cdot)) + H \bigl(\tau, x(\tau), \nabla^\alpha \varphi(\tau, x_\tau(\cdot)) \bigr)
            = 0
            \quad \text{for a.e. } \tau \in [t, T].
        \end{equation*}
        Therefore, we obtain $\kappa(\vartheta) \geq \kappa(t)$ for every $\vartheta \in (t, T)$, which yields $\kappa(T) \geq \kappa(t)$.
        Hence, and since the functional $\varphi$ satisfies the boundary condition \eqref{boundary_condition} by Theorem \ref{theorem_solution}, we derive $J(t, w(\cdot), u(\cdot)) \geq \varphi(t, w(\cdot))$.
        As a result, taking the infimum over all $u(\cdot) \in \Linf([t, T], \mathbb{R}^m)$, we conclude that (see \eqref{value})
        \begin{equation} \label{rho_geq_varphi}
            \rho(t, w(\cdot))
            \geq \varphi(t, w(\cdot)).
        \end{equation}

        Now, let us fix $\varepsilon > 0$ and show that
        \begin{equation} \label{rho_leq_varphi}
            \rho(t, w(\cdot))
            \leq \varphi(t, w(\cdot)) + \varepsilon.
        \end{equation}
        Below, we will use the numbers $\mu_A$, $\mu_B$, and $\mu_Q$ from \eqref{mu_A_mu_B_mu_Q} as well as the numbers $\mu_{U^\circ}$ and $\mu_\varphi$ from Propositions \ref{proposition_mu_U} and \ref{proposition_mu_varphi}, respectively.
        Let us denote
        \begin{equation*}
            \mu^\circ
            \doteq T^{1 - \alpha} \mu_A + \mu_B \mu_{U^\circ},
            \quad \mu_x^\circ
            \doteq \mathrm{E}_{2 \alpha - 1}(\mu^\circ T^{2 \alpha - 1}) \|w(\cdot)\|_{\C([0, t], \mathbb{R}^n)},
        \end{equation*}
        where $\mathrm{E}_{2 \alpha - 1}(\cdot)$ is the Mittag-Leffler function with the exponent $2 \alpha - 1$ (see, e.g., \cite[equation (1.90)]{Samko_Kilbas_Marichev_1993}).
        Then, let us choose a number $\vartheta^\circ \in (t, T)$ from the condition
        \begin{equation*}
            (T - \vartheta^\circ)^{2 \alpha - 1} \mu_\varphi (1 + \mu_A) (\mu_x^\circ)^2
            + (T - \vartheta^\circ) \mu_Q (\mu_x^\circ)^2
            \leq \frac{\varepsilon}{2}
        \end{equation*}
        and fix $\vartheta \in [\vartheta^\circ, T)$.
        Further, let us consider the set
        \begin{equation*}
            \mathcal{X}_\ast
            \doteq \biggl\{ x(\cdot) \in \mathcal{X}(t, w(\cdot)) \colon
            \, \|(^C D^\alpha x)(\tau)\|_{\mathbb{R}^n}
            \leq \frac{\mu^\circ \mu_x^\circ}{(T - \vartheta)^{1 - \alpha}}
            \text{ for a.e. } \tau \in [t, T] \biggr\}.
        \end{equation*}
        According to, e.g., \cite[Theorem 1]{Gomoyunov_2020_DE}, the set $\mathcal{X}_\ast$ is compact in $\AC^\alpha([0, T], \mathbb{R}^n)$.
        Since the mappings $\partial_t^\alpha \varphi \colon \mathcal{G}_0 \to \mathbb{R}$ and $\nabla^\alpha \varphi \colon \mathcal{G}_0 \to \mathbb{R}$ are continuous by Theorem \ref{theorem_solution} and the function $h \colon [0, T] \times \mathbb{R}^n \times \mathbb{R}^m \times \mathbb{R}^n \to \mathbb{R}$ defined by \eqref{pre_Hamiltonian} is continuous due to the imposed assumptions, there exists a modulus of continuity $\omega^\circ(\cdot)$ such that, for any $\tau$, $\tau^\prime \in [t, \vartheta]$, any $x(\cdot) \in \mathcal{X}_\ast$, and any $u \in \mathbb{R}^m$ with $\|u\|_{\mathbb{R}^m} \leq \mu_{U^\circ} \mu_x^\circ / (T - \vartheta)^{1 - \alpha}$,
        \begin{align*}
            & |\partial_t^\alpha \varphi(\tau^\prime, x_{\tau^\prime}(\cdot)) - \partial_t^\alpha \varphi(\tau, x_\tau(\cdot)) | \\
            & + \bigl| h \bigl( \tau^\prime, x(\tau^\prime), u, \nabla^\alpha \varphi(\tau^\prime, x_{\tau^\prime}(\cdot)) \bigr)
            - h \bigl( \tau, x(\tau), u, \nabla^\alpha \varphi(\tau, x_\tau(\cdot)) \bigr) \bigr|
            \leq \omega^\circ(|\tau^\prime - \tau|).
        \end{align*}
        Then, let us choose a number $\delta > 0$ satisfying the inequality
        \begin{equation*}
            \omega^\circ(\delta)
            \leq \frac{\varepsilon}{2 (\vartheta - t)}
        \end{equation*}
        and take a partition $\Delta \doteq \{\tau_j\}_{j \in \overline{1, \ell + 1}}$ of the interval $[t, \vartheta]$ with $\diam(\Delta) \leq \delta$.

        Now, let $u^\circ(\cdot) \doteq u^\circ(\cdot \mid t, w(\cdot), \vartheta, \Delta)$ and $x^\circ(\cdot) \doteq x(\cdot \mid t, w(\cdot), u^\circ(\cdot))$ be the control and the motion of system \eqref{differential_equation} generated by the rules \eqref{control_law} and \eqref{control_law_0}.
        Similarly to \eqref{kappa}, let us introduce the corresponding function
        \begin{equation*}
            \kappa^\circ(\tau)
            \doteq \varphi(\tau, x^\circ_\tau(\cdot))
            - \int_{\tau}^{T} \bigl( \langle x^\circ(\xi), Q(\xi) x^\circ(\xi) \rangle_{\mathbb{R}^n}
            + \langle u^\circ(\xi), R(\xi) u^\circ(\xi) \rangle_{\mathbb{R}^m} \bigr) \, \rd \xi
        \end{equation*}
        for all $\tau \in [t, T]$.

        In view of \eqref{control_law}, we obtain
        \begin{equation} \label{derivative_estimate}
            \begin{split}
                \|(^C D^\alpha x^\circ)(\tau)\|_{\mathbb{R}^n}
                & \leq \mu_A \|x^\circ(\tau)\|_{\mathbb{R}^n}
                + \frac{\mu_B \mu_{U^\circ} \|x^\circ_{\tau_j}(\cdot)\|_{\C([0, \tau_j], \mathbb{R}^n)}}{(T - \tau_j)^{1 - \alpha}} \\
                & \leq \frac{\mu^\circ \|x^\circ_\tau(\cdot)\|_{\C([0, \tau], \mathbb{R}^n)}}{(T - \tau)^{1 - \alpha}}
                \quad \text{for a.e. } \tau \in [\tau_j, \tau_{j + 1}) \quad \forall j \in \overline{1, \ell}.
            \end{split}
        \end{equation}
        In addition, due to \eqref{control_law_0}, we have
        \begin{equation} \label{derivative_estimate_0}
            \|(^C D^\alpha x^\circ)(\tau)\|_{\mathbb{R}^n}
            \leq \mu_A \|x^\circ(\tau)\|_{\mathbb{R}^n}
            \leq \frac{\mu^\circ \|x^\circ_\tau(\cdot)\|_{\C([0, \tau], \mathbb{R}^n)}}{(T - \tau)^{1 - \alpha}}
            \quad \text{for a.e. } \tau \in [\vartheta, T].
        \end{equation}
        Hence, according to \eqref{I_D}, \eqref{a_definition}, and \eqref{a_estimate}, we derive
        \begin{equation*}
            \begin{split}
                \|x^\circ(\tau)\|_{\mathbb{R}^n}
                & = \biggl\| a(\tau \mid t, w(\cdot))
                + \frac{1}{\Gamma(\alpha)} \int_{t}^{\tau} \frac{(^C D^\alpha x^\circ)(\xi)}{(\tau - \xi)^{1 - \alpha}} \, \rd \xi \biggr\|_{\mathbb{R}^n} \\
                & \leq \|w(\cdot)\|_{\C([0, t], \mathbb{R}^n)}
                + \frac{\mu^\circ}{\Gamma(\alpha)} \int_{t}^{\tau} \frac{\|x^\circ_\xi(\cdot)\|_{\C([0, \xi], \mathbb{R}^n)}}
                {(T - \xi)^{1 - \alpha} (\tau - \xi)^{1 - \alpha}} \, \rd \xi \\
                & \leq \|w(\cdot)\|_{\C([0, t], \mathbb{R}^n)}
                + \frac{\mu^\circ}{\Gamma(\alpha)} \int_{t}^{\tau} \frac{\|x^\circ_\xi(\cdot)\|_{\C([0, \xi], \mathbb{R}^n)}}{(\tau - \xi)^{2 - 2 \alpha}} \, \rd \xi
                \quad \forall \tau \in [t, T].
            \end{split}
        \end{equation*}
        Therefore, taking \cite[Proposition 2]{Gomoyunov_2019_PFDA} into account, we get
        \begin{equation*}
            \|x^\circ_\tau(\cdot)\|_{\C([0, \tau], \mathbb{R}^n)}
            \leq \|w(\cdot)\|_{\C([0, t], \mathbb{R}^n)}
            + \frac{\mu^\circ}{\Gamma(\alpha)} \int_{0}^{\tau} \frac{\|x^\circ_\xi(\cdot)\|_{\C([0, \xi], \mathbb{R}^n)}}
            {(\tau - \xi)^{2 - 2 \alpha}} \, \rd \xi
            \quad \forall \tau \in [0, T].
        \end{equation*}
        Thus, applying the fractional Gronwall-type inequality (see, e.g., \cite[Lemma 6.19]{Diethelm_2010}), we conclude that
        \begin{equation*}
            \|x^\circ_\tau(\cdot)\|_{\C([0, \tau], \mathbb{R}^n)}
            \leq \mathrm{E}_{2 \alpha - 1}(\mu^\circ \tau^{2 \alpha - 1}) \|w(\cdot)\|_{\C([0, t], \mathbb{R}^n)}
            \leq \mu_x^\circ
            \quad \forall \tau \in [0, T].
        \end{equation*}

        Consequently, we have (see \eqref{derivative_estimate_0})
        \begin{equation*}
            \|(^C D^\alpha x^\circ)(\tau)\|_{\mathbb{R}^n}
            \leq \mu_A \mu_x^\circ
            \quad \text{for a.e. } \tau \in [\vartheta, T],
        \end{equation*}
        wherefrom, due to the choice of the number $\vartheta^\circ$ and equality \eqref{control_law_0}, it follows that
        \begin{equation} \label{kappa^0_T_vartheta}
            |\kappa^\circ(T) - \kappa^\circ(\vartheta)|
            \leq \frac{\varepsilon}{2}.
        \end{equation}
        Moreover, we derive (see \eqref{derivative_estimate} and \eqref{derivative_estimate_0})
        \begin{equation*}
            \|(^C D^\alpha x^\circ)(\tau)\|_{\mathbb{R}^n}
            \leq \frac{\mu^\circ \mu_x^\circ}{(T - \vartheta)^{1 - \alpha}}
            \quad \text{for a.e. } \tau \in [t, T],
        \end{equation*}
        which implies the inclusion $x^\circ(\cdot) \in \mathcal{X}_\ast$.
        In addition, we get
        \begin{equation*}
            \|U^\circ(\tau_j, x^\circ_{\tau_j}(\cdot))\|_{\mathbb{R}^m}
            \leq \frac{\mu_{U^\circ} \mu_x^\circ}{(T - \vartheta)^{1 - \alpha}}
            \quad \forall j \in \overline{1, \ell}.
        \end{equation*}

        Then, similarly to \eqref{kappa_dot}, for every $j \in \overline{1, \ell}$ and a.e. $\tau \in [\tau_j, \tau_{j + 1})$, using equalities \eqref{U^0_corollary} and \eqref{control_law}, the fact that the functional $\varphi$ satisfies the HJB equation \eqref{HJB}, and the choice of the number $\delta$, we derive
        \begin{align*}
            \dot{\kappa}^\circ(\tau)
            & = \partial_t^\alpha \varphi(\tau, x^\circ_\tau(\cdot))
            + h \bigl( \tau, x^\circ(\tau), U^\circ(\tau_j, x^\circ_{\tau_j}(\cdot)), \nabla^\alpha \varphi(\tau, x^\circ_\tau(\cdot)) \bigr) \\
            & \leq \partial_t^\alpha \varphi(\tau_j, x^\circ_{\tau_j}(\cdot))
            + h \bigl( \tau_j, x^\circ(\tau_j), U^\circ(\tau_j, x^\circ_{\tau_j}(\cdot)), \nabla^\alpha \varphi(\tau_j, x^\circ_{\tau_j}(\cdot)) \bigr)
            + \omega^\circ(\tau - \tau_j) \\
            & \leq \partial_t^\alpha \varphi(\tau_j, x^\circ_{\tau_j}(\cdot))
            + H \bigl( \tau_j, x^\circ(\tau_j), \nabla^\alpha \varphi(\tau_j, x^\circ_{\tau_j}(\cdot)) \bigr)
            + \omega^\circ(\delta) \\
            & \leq \frac{\varepsilon}{2 (\vartheta - t)}.
        \end{align*}
        Hence, we have $\kappa^\circ(\vartheta) \leq \kappa^\circ(t) + \varepsilon / 2$, and, therefore, $\kappa^\circ(T) \leq \kappa^\circ(t) + \varepsilon$ by \eqref{kappa^0_T_vartheta}.
        Since the functional $\varphi$ satisfies the boundary condition \eqref{boundary_condition}, we obtain the estimate
        \begin{equation*}
            J(t, w(\cdot), u^\circ(\cdot))
            \leq \varphi(t, w(\cdot)) + \varepsilon,
        \end{equation*}
        which yields \eqref{rho_leq_varphi} due to \eqref{value}.

        According to \eqref{rho_geq_varphi} and \eqref{rho_leq_varphi}, we get $\rho(t, w(\cdot)) = \varphi (t, w(\cdot))$ for all $(t, w(\cdot)) \in \mathcal{G}_0$.
        Hence, observing that the value functional $\rho$ satisfies the boundary condition \eqref{boundary_condition} by definition \eqref{value}, we conclude that the functionals $\rho$ and $\varphi$ coincide.
        It remains to note that the second part of the theorem follows from the presented proof of inequality \eqref{rho_leq_varphi}.
        The theorem is proved.
    \end{proof}

\section{Conclusion}
\label{section_conclusion}

    In this paper, we have considered the linear-quadratic optimal control problem for the dynamical system described by the linear Caputo fractional differential equation \eqref{differential_equation} and the quadratic cost functional \eqref{cost_functional}.
    We have introduced the associated Cauchy problem for the Hamilton--Jacobi--Bellman equation with fractional $ci$-derivatives \eqref{HJB} and the right-end boundary condition \eqref{boundary_condition}.
    We have proved that the value functional $\rho$ of the optimal control problem is a solution of the Cauchy problem and is given by the explicit formula \eqref{varphi_main}.
    Moreover, we have proposed a practically realizable feedback control procedure \eqref{control_law} and \eqref{control_law_0} that generates $\varepsilon$-optimal controls with any predetermined accuracy $\varepsilon > 0$.

    In particular, we have shown that, along with the methods developed in \cite{Han_Lin_Yong_2021}, the optimal control problem under consideration can also be effectively solved using the formalism of Hamilton--Jacobi--Bellman equations with fractional $ci$-derivatives proposed in \cite{Gomoyunov_2020_SIAM}.
    The latter approach has its own advantages, primarily related to the fact that it allows us to obtain explicit formulas for the optimal positional control strategy \eqref{U^0} and justify the possibility of using it in a time-discrete scheme.

    Finally, let us note that it is of interest to further develop the results of the paper, for example, in the following directions:

    (i)
        to elaborate the corresponding numerical methods for solving the optimal control problem under consideration;

    (ii)
        to investigate the problem in the class of square integrable controls and justify the optimality of the constructed control strategy \eqref{U^0} within this formulation;

    (iii)
        to study ``time-continuous'' feedback control schemes, when a control strategy is substituted directly into the right-hand side of the dynamic equation \eqref{differential_equation};

    (iv)
        to apply the technique developed in the paper in order to solve linear-quadratic differential games in fractional-order systems.

\appendix

\section{Proofs of Section \ref{subsection_preliminaries}}
\label{appendix}

    \begin{proof}[Proof of Proposition \ref{proposition_K}]
        First of all, note that equality \eqref{K_symmetric} follows directly from definition \eqref{K_definition} of the function $K(\cdot, \cdot)$ and the assumption that the matrices $P$ and $Q(\tau)$ for all $\tau \in [0, T]$ are symmetric.
        Further, since the functions $B(\cdot)$ and $\Phi(\cdot, \cdot)$ are continuous, in order to show that the function $K(\cdot, \cdot)$ is continuous, it suffices to prove continuity of the function
        \begin{equation*}
            \hat{K}(\tau, \xi)
            \doteq \int_{\tau \vee \xi}^{T} \frac{\Phi(\eta, \tau)^\top Q(\eta) \Phi(\eta, \xi)}{(\eta - \tau)^{1 - \alpha} (\eta - \xi)^{1 - \alpha}} \, \rd \eta
            \quad \forall \tau, \xi \in [0, T].
        \end{equation*}

        Recalling that the function $Q(\cdot)$ is continuous, let us take a number $\hat{\mu} \geq 0$ and a modulus of continuity $\hat{\omega}(\cdot)$ such that
        \begin{equation*}
            \|\Phi(\eta, \tau)^\top Q(\eta) \Phi(\eta, \xi) \|_{\mathbb{R}^{n \times n}}
            \leq \hat{\mu}
            \quad \forall \tau, \xi \in [0, T] \quad \forall \eta \in [\tau \vee \xi, T]
        \end{equation*}
        and
        \begin{equation*}
            \|\Phi(\eta, \tau^\prime)^\top Q(\eta) \Phi(\eta, \xi^\prime)
            - \Phi(\eta, \tau)^\top Q(\eta) \Phi(\eta, \xi) \|_{\mathbb{R}^{n \times n}}
            \leq \hat{\omega}(|\tau^\prime - \tau| + |\xi^\prime - \xi|)
        \end{equation*}
        for all $\tau$, $\tau^\prime$, $\xi$, $\xi^\prime \in [0, T]$ and all $\eta \in [\tau \vee \tau^\prime \vee \xi \vee \xi^\prime, T]$.

        Then, for any $\tau$, $\tau^\prime$, $\xi$, $\xi^\prime \in [0, T]$ with $\tau^\prime \vee \xi^\prime \geq \tau \vee \xi$, taking Proposition \ref{proposition_technical_continuity_2} into account (see also Remark \ref{remark_appendix}), we derive
        \begin{align*}
            & \|\hat{K}(\tau^\prime, \xi^\prime) - \hat{K}(\tau, \xi)\|_{\mathbb{R}^{n \times n}} \\*
            & \leq \int_{\tau^\prime \vee \xi^\prime}^{T}
            \frac{\|\Phi(\eta, \tau^\prime)^\top Q(\eta) \Phi(\eta, \xi^\prime) - \Phi(\eta, \tau)^\top Q(\eta) \Phi(\eta, \xi)\|_{\mathbb{R}^{n \times n}}}{(\eta - \tau^\prime)^{1 - \alpha} (\eta - \xi^\prime)^{1 - \alpha}} \, \rd \eta \\*
            & \quad + \biggl\| \int_{\tau^\prime \vee \xi^\prime}^{T} \frac{\Phi(\eta, \tau)^\top Q(\eta) \Phi(\eta, \xi)}
            {(\eta - \tau^\prime)^{1 - \alpha} (\eta - \xi^\prime)^{1 - \alpha}} \, \rd \eta
            - \int_{\tau \vee \xi}^{T} \frac{\Phi(\eta, \tau)^\top Q(\eta) \Phi(\eta, \xi)}
            {(\eta - \tau)^{1 - \alpha} (\eta - \xi)^{1 - \alpha}} \, \rd \eta \biggr\|_{\mathbb{R}^{n \times n}} \\
            & \leq \frac{T^{2 \alpha - 1} \hat{\omega}(|\tau^\prime - \tau| + |\xi^\prime - \xi|)}{2 \alpha - 1}
            + \hat{\mu} \bar{\omega}_{1 - \alpha}^{(3)}(|\tau^\prime - \tau| + |\xi^\prime - \xi|),
        \end{align*}
        which implies continuity of the function $\hat{K}(\cdot, \cdot)$.
    \end{proof}

    The proof of Proposition \ref{proposition_M} requires some preliminary considerations.

    Let $t \in [0, T)$ be given.
    Let us denote by $\L2 ([t, T], \mathbb{R}^m)$ the Hilbert space of all (classes of equivalence of) measurable and norm square integrable functions $\f2 \colon [t, T] \to \mathbb{R}^m$ with the inner product
    \begin{equation*}
        \langle \f2(\cdot), \g2(\cdot) \rangle_{\L2([t, T], \mathbb{R}^m)}
        \doteq \int_{t}^{T} \langle \f2(\tau), \g2(\tau) \rangle_{\mathbb{R}^m} \, \rd \tau
        \quad \forall \f2(\cdot), \g2(\cdot) \in \L2([t, T], \mathbb{R}^m)
    \end{equation*}
    and the corresponding norm
    \begin{equation*}
        \|\f2(\cdot)\|_{\L2([t, T], \mathbb{R}^m)}
        \doteq \biggl( \int_{t}^{T} \|\f2(\tau)\|_{\mathbb{R}^m}^2 \, \rd \tau \biggr)^{1 / 2}
        \quad \forall \f2(\cdot) \in \L2([t, T], \mathbb{R}^m).
    \end{equation*}
    Let us consider a linear operator $\A2_{t} \colon \L2([t, T], \mathbb{R}^m) \to \L2([t, T], \mathbb{R}^m)$ defined by
    \begin{equation} \label{A_definition}
        (\A2_t \f2)(\tau)
        \doteq R(\tau) \f2(\tau)
        + \frac{1}{(T - \tau)^{1 - \alpha}} \int_{t}^{T} \frac{K(\tau, \eta) \f2(\eta)}{(T - \eta)^{1 - \alpha}} \, \rd \eta
    \end{equation}
    for a.e. $\tau \in [t, T]$ and every $\f2(\cdot) \in \L2([t, T], \mathbb{R}^m)$.
    By virtue of the properties of the function $K(\cdot, \cdot)$ established in Proposition \ref{proposition_K}, and since the function $R(\cdot)$ is continuous and the matrices $R(\tau)$ for all $\tau \in [0, T]$ are symmetric, we conclude that the operator $\A2_t$ is well-defined, bounded, and self-adjoint (see, e.g., \cite[Examples 4.2.4 and 4.4.6]{Debnath_Mikusinski_2005} and \cite[Ch. 7, \S 15, Example B and Ch. 9, \S 8, Theorem 1]{Zaanen_1953}).
    In addition, we note that, for any $\f2(\cdot) \in \L2([t, T], \mathbb{R}^m)$, the inequality
    \begin{equation*}
        \langle (\A2_t \f2)(\cdot), \f2(\cdot) \rangle_{\L2([t, T], \mathbb{R}^m)}
        \geq \theta \|\f2(\cdot)\|_{\L2([t, T], \mathbb{R}^m)}^2
    \end{equation*}
    is valid, where the number $\theta$ is taken from \eqref{theta}.
    Indeed, we have
    \begin{equation*}
        \begin{split}
            & \langle (\A2_t \f2)(\cdot), \f2(\cdot) \rangle_{\L2([t, T], \mathbb{R}^m)} \\
            & \geq \theta \int_{t}^{T} \|\f2(\tau)\|_{\mathbb{R}^m}^2 \, \rd \tau
            + \int_{t}^{T} \frac{1}{(T - \tau)^{1 - \alpha}} \biggl\langle \int_{t}^{T} \frac{K(\tau, \eta) \f2(\eta)}{(T - \eta)^{1 - \alpha}} \, \rd \eta,
            \f2(\tau) \biggr\rangle_{\mathbb{R}^m} \, \rd \tau,
        \end{split}
    \end{equation*}
    and, for the second term of the obtained estimate, in view of definition \eqref{K_definition} of the function $K(\cdot, \cdot)$ and recalling that the matrices $P$ and $Q(\tau)$ for all $\tau \in [0, T]$ are positive semi-definite, we derive
    \begin{equation*}
        \begin{split}
            & \int_{t}^{T} \frac{1}{(T - \tau)^{1 - \alpha}} \biggl\langle \int_{t}^{T} \frac{K(\tau, \eta) \f2(\eta)}{(T - \eta)^{1 - \alpha}} \, \rd \eta,
            \f2(\tau) \biggr\rangle_{\mathbb{R}^m} \, \rd \tau \\
            & = \biggl\langle \int_{t}^{T} \frac{\Phi(T, \tau) B(\tau) \f2(\tau)}{(T - \tau)^{1 - \alpha}} \, \rd \tau,
            P \int_{t}^{T} \frac{\Phi(T, \tau) B(\tau) \f2(\tau)}{(T - \tau)^{1 - \alpha}} \, \rd \tau \biggr\rangle_{\mathbb{R}^m} \\
            & \quad + \int_{t}^{T} \biggl\langle
            \int_{t}^{\eta} \frac{\Phi(\eta, \tau) B(\tau) \f2(\tau)}{(\eta - \tau)^{1 - \alpha}} \, \rd \tau,
            Q(\eta) \int_{t}^{\eta} \frac{\Phi(\eta, \tau) B(\tau) \f2(\tau)}{(\eta - \tau)^{1 - \alpha}} \, \rd \tau \biggr\rangle_{\mathbb{R}^m} \, \rd \eta \\
            & \geq 0.
        \end{split}
    \end{equation*}
    In particular, we conclude that the operator $\A2_{t}$ is invertible and the inverse operator $\A2_{t}^{- 1} \colon \L2([t, T], \mathbb{R}^m) \to \L2([t, T], \mathbb{R}^m)$ admits the estimate
    \begin{equation} \label{norm_of_inverse}
        \|(\A2_{t}^{- 1} \g2)(\cdot)\|_{\L2([t, T], \mathbb{R}^m)}
        \leq \frac{1}{\theta} \|\g2(\cdot)\|_{\L2([t, T], \mathbb{R}^m)}
        \quad \forall \g2(\cdot) \in \L2([t, T], \mathbb{R}^m)
    \end{equation}
    and is self-adjoint (see, e.g., \cite[Theorem 4.6.11]{Debnath_Mikusinski_2005}).

    The properties of the operator $\A2_{t}$ described above allow us to get
    \begin{proposition} \label{proposition_existence_of_a_continuous_solution}
        For any $t \in [0, T]$ and any $g(\cdot) \in \C([t, T], \mathbb{R}^m)$, there exists a unique solution $f(\cdot) \in \C([t, T], \mathbb{R}^m)$ of the Fredholm integral equation
        \begin{equation} \label{proposition_existence_of_a_continuous_solution_main}
            R(\tau) f(\tau) + \int_{t}^{T} \frac{K(\tau, \eta) f(\eta)}{(T - \eta)^{2 - 2 \alpha}} \, \rd \eta
            = g(\tau)
            \quad \forall \tau \in [t, T].
        \end{equation}
        In addition, there exist a number $\mu_{\A2} \geq 0$ and a modulus of continuity $\omega_{\A2}(\cdot)$ such that, for any $t \in [0, T]$ and any $g(\cdot) \in \C([t, T], \mathbb{R}^m)$, the solution $f(\cdot) \in \C([t, T], \mathbb{R}^m)$ of equation
        \eqref{proposition_existence_of_a_continuous_solution_main} satisfies the estimates
        \begin{align}
            \label{proposition_existence_of_a_continuous_solution_main_2}
            \|f(\tau)\|_{\mathbb{R}^m}
            & \leq \mu_{\A2} \|g(\cdot)\|_{\C([t, T], \mathbb{R}^m)}, \\
            \label{proposition_existence_of_a_continuous_solution_main_3}
            \|f(\tau^\prime) - f(\tau)\|_{\mathbb{R}^m}
            & \leq \frac{\omega_g(|\tau^\prime - \tau|)}{\theta} + \|g(\cdot)\|_{\C([t, T], \mathbb{R}^m)} \omega_{\A2}(|\tau^\prime - \tau|)
        \end{align}
        for all $\tau$, $\tau^\prime \in [t, T]$, where $\theta$ is the number from \eqref{theta} and $\omega_g(\cdot)$ is the modulus of continuity of the function $g(\cdot)$.
    \end{proposition}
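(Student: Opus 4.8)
The plan is to reduce the Fredholm equation \eqref{proposition_existence_of_a_continuous_solution_main} to the operator equation $\A2_t \f2 = \g2$ already analysed above, whose invertibility and the bound \eqref{norm_of_inverse} are at our disposal, and then to bootstrap the resulting $\L2$-solution to a continuous one. The bridge is the weighted change of unknown $\f2(\tau) \doteq f(\tau) / (T - \tau)^{1 - \alpha}$ together with $\g2(\tau) \doteq g(\tau) / (T - \tau)^{1 - \alpha}$: multiplying the operator equation $\A2_t \f2 = \g2$ (see \eqref{A_definition}) by $(T - \tau)^{1 - \alpha}$, one checks that a function $f(\cdot)$ solves \eqref{proposition_existence_of_a_continuous_solution_main} precisely when $\f2(\cdot)$ solves $\A2_t \f2 = \g2$. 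First I would record that, since $\alpha > 1 / 2$, the scalar weight $(T - \cdot)^{- (1 - \alpha)}$ is square-integrable on $[t, T]$ (because $2 (1 - \alpha) < 1$); hence $\g2(\cdot) \in \L2([t, T], \mathbb{R}^m)$ for every continuous $g(\cdot)$, with $\|\g2(\cdot)\|_{\L2([t, T], \mathbb{R}^m)} \leq \|g(\cdot)\|_{\C([t, T], \mathbb{R}^m)} \bigl( T^{2 \alpha - 1} / (2 \alpha - 1) \bigr)^{1 / 2}$, and, conversely, any continuous $f(\cdot)$ gives rise to some $\f2(\cdot) \in \L2([t, T], \mathbb{R}^m)$. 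Applying $\A2_t^{- 1}$ to $\g2(\cdot)$ then produces a solution $\f2(\cdot)$, and uniqueness in $\C([t, T], \mathbb{R}^m)$ follows from the injectivity of this correspondence combined with the invertibility of $\A2_t$.

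The decisive step, and the main obstacle, is to show that the $\L2$-solution $\f2(\cdot)$ actually corresponds to a continuous function $f(\cdot)$. To this end I would set $\psi(\tau) \doteq \int_{t}^{T} K(\tau, \eta) \f2(\eta) (T - \eta)^{- (1 - \alpha)} \, \rd \eta$ and note that after the substitution this coincides with the integral term $\int_{t}^{T} K(\tau, \eta) f(\eta) (T - \eta)^{- (2 - 2 \alpha)} \, \rd \eta$ of \eqref{proposition_existence_of_a_continuous_solution_main}. By the Cauchy--Schwarz inequality and the integrability of the weight, $\psi(\cdot)$ is well defined and bounded, and, using the uniform continuity of $K(\cdot, \cdot)$ from Proposition \ref{proposition_K}, one obtains $\|\psi(\tau^\prime) - \psi(\tau)\|_{\mathbb{R}^m} \leq \omega_K(|\tau^\prime - \tau|) \|\f2(\cdot)\|_{\L2([t, T], \mathbb{R}^m)} \bigl( T^{2 \alpha - 1} / (2 \alpha - 1) \bigr)^{1 / 2}$, where $\omega_K(\cdot)$ is a modulus of continuity of $K(\cdot, \cdot)$, so $\psi(\cdot)$ is continuous. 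Rewriting the equation as $R(\tau) f(\tau) = g(\tau) - \psi(\tau)$ and inverting the continuous, uniformly positive definite matrix $R(\tau)$ gives $f(\tau) = R(\tau)^{- 1} (g(\tau) - \psi(\tau))$, which is continuous and simultaneously recovers \eqref{proposition_existence_of_a_continuous_solution_main}.

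Both estimates follow from this last representation. For \eqref{proposition_existence_of_a_continuous_solution_main_2} I would chain $\|R(\tau)^{- 1}\|_{\mathbb{R}^{m \times m}} \leq 1 / \theta$ (from \eqref{theta}), the bound on $\psi(\cdot)$, and $\|\f2(\cdot)\|_{\L2([t, T], \mathbb{R}^m)} \leq \theta^{- 1} \|\g2(\cdot)\|_{\L2([t, T], \mathbb{R}^m)}$ from \eqref{norm_of_inverse}, which collects into a constant $\mu_{\A2} \doteq \theta^{- 1} \bigl( 1 + \mu_K T^{2 \alpha - 1} / ((2 \alpha - 1) \theta) \bigr)$, with $\mu_K \doteq \max \|K(\cdot, \cdot)\|_{\mathbb{R}^{m \times m}}$, that is manifestly independent of $t$ and $g(\cdot)$. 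For \eqref{proposition_existence_of_a_continuous_solution_main_3} I would write $R(\tau) (f(\tau^\prime) - f(\tau)) = (g(\tau^\prime) - g(\tau)) - (\psi(\tau^\prime) - \psi(\tau)) - (R(\tau^\prime) - R(\tau)) f(\tau^\prime)$ and again apply $\|R(\tau)^{- 1}\|_{\mathbb{R}^{m \times m}} \leq 1 / \theta$: the increment $g(\tau^\prime) - g(\tau)$ yields exactly the term $\omega_g(|\tau^\prime - \tau|) / \theta$, while the $\psi$- and $R$-increments, estimated by $\omega_K(\cdot)$ and the modulus of continuity of $R(\cdot)$ combined with \eqref{proposition_existence_of_a_continuous_solution_main_2}, assemble into a single term $\|g(\cdot)\|_{\C([t, T], \mathbb{R}^m)} \omega_{\A2}(|\tau^\prime - \tau|)$ with a $t$-independent modulus $\omega_{\A2}(\cdot)$. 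The only place where the standing hypothesis $\alpha > 1 / 2$ enters is the $\L2$-integrability of the weight $(T - \cdot)^{- (1 - \alpha)}$, which underlies both the reduction and the continuity bootstrap.
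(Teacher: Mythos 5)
Your proof is correct, and it rests on the same foundation as the paper's own argument: the weighted substitution $\f2(\tau) = f(\tau)/(T-\tau)^{1-\alpha}$, $\g2(\tau) = g(\tau)/(T-\tau)^{1-\alpha}$ (admissible precisely because $\alpha > 1/2$), the invertibility of the operator $\A2_t$ from \eqref{A_definition}, and the bound \eqref{norm_of_inverse}; the constants you extract for \eqref{proposition_existence_of_a_continuous_solution_main_2} and \eqref{proposition_existence_of_a_continuous_solution_main_3} match the paper's up to harmless factors, and both are manifestly independent of $t$ and $g(\cdot)$, which is all the statement requires. Where you genuinely diverge is in the continuity bootstrap, which is the delicate step. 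The paper works with the a.e.-defined function $\hat f(\tau) = (T-\tau)^{1-\alpha}\f2(\tau)$, proves it is uniformly continuous on a full-measure set $\mathcal{T} \subset [t,T]$, invokes an extension theorem (Warga) to produce a continuous $f(\cdot)$ agreeing with $\hat f$ on $\mathcal{T}$, and then verifies equation \eqref{proposition_existence_of_a_continuous_solution_main} at every $\tau$ by a limiting argument along sequences in $\mathcal{T}$. You instead isolate the integral term $\psi(\tau) \doteq \int_t^T K(\tau,\eta)\,\f2(\eta)\,(T-\eta)^{-(1-\alpha)}\,\rd\eta$, prove its continuity directly (Cauchy--Schwarz plus uniform continuity of $K(\cdot,\cdot)$ from Proposition \ref{proposition_K}), and define $f(\tau) \doteq R(\tau)^{-1}(g(\tau)-\psi(\tau))$; since $f = (T-\cdot)^{1-\alpha}\f2$ a.e., substituting back into $\psi$ shows that this $f$ satisfies the equation for \emph{all} $\tau$, with continuity built in. This dispenses with the dense-set extension and the limit passage entirely and is, if anything, the tidier route; nothing is lost. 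Two minor points. First, your argument, like the operator framework itself, presupposes $t < T$, whereas the statement covers $t \in [0,T]$; at $t = T$ the equation degenerates to $R(T)f(T) = g(T)$, which the paper settles in one line and you should too. Second, your uniqueness remark ("injectivity of the correspondence plus invertibility of $\A2_t$") is correct but deserves one explicit sentence: any continuous solution $f^\prime$ yields $\f2^\prime \in \L2([t,T],\mathbb{R}^m)$ solving $\A2_t \f2^\prime = \g2$, hence $\f2^\prime = \f2$ a.e., hence $f^\prime = f$ everywhere by continuity.
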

    \begin{proof}
        Recalling that the functions $R(\cdot)$ and $K(\cdot, \cdot)$ are continuous, we denote their moduli of continuity by $\omega_R(\cdot)$ and  $\omega_K(\cdot)$, respectively, and put
        \begin{equation} \label{mu_K_mu_R}
            \mu_R
            \doteq \max_{\tau \in [0, T]} \|R(\tau)\|_{\mathbb{R}^{m \times m}},
            \quad \mu_K
            \doteq \max_{\tau, \xi \in [0, T]} \|K(\tau, \xi)\|_{\mathbb{R}^{m \times m}}.
        \end{equation}

        Let $t \in [0, T)$ and $g(\cdot) \in \C([t, T], \mathbb{R}^m)$ be fixed.
        Let us take $\g2(\cdot) \in \L2([t, T], \mathbb{R}^m)$ satisfying the equality $\g2(\tau) = g(\tau) / (T - \tau)^{1 - \alpha}$ for a.e. $\tau \in [t, T]$ and then define $\f2(\cdot) \doteq (\A2_{t}^{- 1} \g2)(\cdot) \in \L2([t, T], \mathbb{R}^m)$.
        By definition \eqref{A_definition} of the operator $\A2_{t}$, we have
        \begin{equation} \label{equality_f_ast}
            R(\tau) \f2(\tau) + \frac{1}{(T - \tau)^{1 - \alpha}} \int_{t}^{T} \frac{K(\tau, \eta) \f2(\eta)}{(T - \eta)^{1 - \alpha}} \, \rd \eta
            = \g2(\tau)
            \quad \text{for a.e. } \tau \in [t, T],
        \end{equation}
        and, using \eqref{norm_of_inverse} and denoting $\mu_g \doteq \|g(\cdot)\|_{\C([t, T], \mathbb{R}^m)}$ for brevity, we obtain
        \begin{equation} \label{estimate_norm_f_ast}
            \|\f2(\cdot)\|_{\L2([t, T], \mathbb{R}^m)}
            \leq \frac{\|\g2(\cdot)\|_{\L2([t, T], \mathbb{R}^m)}}{\theta}
            \leq \frac{T^{\alpha - 1 / 2} \mu_g}{\theta (2 \alpha - 1)^{1 / 2}}.
        \end{equation}
        From \eqref{equality_f_ast} and \eqref{estimate_norm_f_ast}, applying estimate \eqref{R^-1_estimate} and the Cauchy--Schwarz inequality, we derive
        \begin{equation} \label{estimate_f_ast}
            \begin{split}
                & \|\f2(\tau)\|_{\mathbb{R}^m} \\
                & \leq \|R(\tau)^{- 1}\|_{\mathbb{R}^m}
                \biggl( \|\g2(\tau)\|_{\mathbb{R}^m}
                + \frac{1}{(T - \tau)^{1 - \alpha}} \int_{t}^{T} \frac{\|K(\tau, \eta)\|_{\mathbb{R}^{m \times m}} \|\f2(\eta)\|_{\mathbb{R}^m}}
                {(T - \eta)^{1 - \alpha}} \, \rd \eta \biggr) \\
                & \leq \frac{1}{\theta} \biggl( \|\g2(\tau)\|_{\mathbb{R}^m}
                + \frac{\|\f2(\cdot)\|_{\L2([t, T], \mathbb{R}^m)}}{(T - \tau)^{1 - \alpha}}
                \biggl( \int_{t}^{T} \frac{\|K(\tau, \eta)\|_{\mathbb{R}^{m \times m}}^2}{(T - \eta)^{2 - 2 \alpha}} \, \rd \eta \biggr)^{1 / 2} \biggr) \\
                & \leq \frac{\mu_g}{(T - \tau)^{1 - \alpha}} \biggl( \frac{1}{\theta} + \frac{T^{2 \alpha - 1} \mu_K}{(2 \alpha - 1) \theta^2} \biggr)
                \quad \text{for a.e. } \tau \in [t, T].
            \end{split}
        \end{equation}
        Now, let us consider a function $\hat{f} \colon [t, T] \to \mathbb{R}^m$ such that $\hat{f}(\tau) = (T - \tau)^{1 - \alpha} \f2(\tau)$ for a.e. $\tau \in [t, T]$.
        Then, due to \eqref{equality_f_ast}, we have
        \begin{equation} \label{equality_f}
            R(\tau) \hat{f}(\tau) + \int_{t}^{T} \frac{K(\tau, \eta) \hat{f}(\eta)}{(T - \eta)^{2 - 2 \alpha}} \, \rd \eta
            = g(\tau)
            \quad \text{for a.e. } \tau \in [t, T],
        \end{equation}
        and, by \eqref{estimate_f_ast}, we get
        \begin{equation} \label{estimate_f}
            \|\hat{f}(\tau)\|_{\mathbb{R}^m}
            \leq \mu_{\A2} \mu_g
            \quad \text{for a.e. } \tau \in [t, T]
        \end{equation}
        with the number
        \begin{equation*}
            \mu_{\A2}
            \doteq \frac{1}{\theta} + \frac{T^{2 \alpha - 1} \mu_K}{(2 \alpha - 1) \theta^2}.
        \end{equation*}

        Let us denote by $\mathcal{T}$ the set of all points $\tau \in [t, T]$ for which both equality \eqref{equality_f} and estimate \eqref{estimate_f} are satisfied.
        Then, for any $\tau$, $\tau^\prime \in \mathcal{T}$, we derive
        \begin{equation*}
            \begin{split}
                & \|R(\tau^\prime) \hat{f}(\tau^\prime) - R(\tau) \hat{f}(\tau)\|_{\mathbb{R}^m} \\
                & \leq \|g(\tau^\prime) - g(\tau)\|_{\mathbb{R}^m}
                + \int_{t}^{T} \frac{\|K(\tau^\prime, \eta) - K(\tau, \eta)\|_{\mathbb{R}^{m \times m}} \|\hat{f}(\eta)\|_{\mathbb{R}^m}}
                {(T - \eta)^{2 - 2 \alpha}} \, \rd \eta \\
                & \leq \omega_g (|\tau^\prime - \tau|) + \frac{T^{2 \alpha - 1} \mu_{\A2} \mu_g \omega_K(|\tau^\prime - \tau|)}{2 \alpha - 1}
            \end{split}
        \end{equation*}
        and, consequently,
        \begin{equation} \label{estimate_f_ast_final}
            \begin{split}
                & \|\hat{f}(\tau^\prime) - \hat{f}(\tau)\|_{\mathbb{R}^m} \\
                & \leq \|R(\tau^\prime)^{- 1}\|_{\mathbb{R}^{m \times m}} \|R(\tau^\prime) \hat{f}(\tau^\prime) - R(\tau^\prime) \hat{f}(\tau)\|_{\mathbb{R}^m} \\
                & \leq \frac{1}{\theta} \bigl( \|R(\tau^\prime) \hat{f}(\tau^\prime) - R(\tau) \hat{f}(\tau)\|_{\mathbb{R}^m}
                + \|R(\tau) - R(\tau^\prime)\|_{\mathbb{R}^{m \times m}} \|\hat{f}(\tau)\|_{\mathbb{R}^m} \bigr) \\
                & \leq \frac{\omega_g (|\tau^\prime - \tau|)}{\theta} + \mu_g \omega_{\A2}(|\tau^\prime - \tau|)
            \end{split}
        \end{equation}
        with the modulus of continuity
        \begin{equation*}
            \omega_{\A2}(\delta)
            \doteq \frac{T^{2 \alpha - 1} \mu_{\A2} \omega_K(\delta)}{(2 \alpha - 1) \theta}
            + \frac{\mu_{\A2} \omega_R(\delta)}{\theta}
            \quad \forall \delta
            \geq 0.
        \end{equation*}
        Thus, the restriction of the function $\hat{f}$ to the set $\mathcal{T}$ is uniformly continuous.
        Hence, taking into account that the set $\mathcal{T}$ is dense in $[t, T]$, there exists (see, e.g., \cite[Theorem I.2.13]{Warga_1972}) a unique function $f(\cdot) \in \C([t, T], \mathbb{R}^m)$ such that $f(\tau) = \hat{f}(\tau)$ for all $\tau \in \mathcal{T}$.
        In particular, let us note that estimates \eqref{proposition_existence_of_a_continuous_solution_main_2} and \eqref{proposition_existence_of_a_continuous_solution_main_3} for this function $f(\cdot)$ follow directly from \eqref{estimate_f} and \eqref{estimate_f_ast_final}, respectively.

        Let us verify that the function $f(\cdot)$ satisfies equation \eqref{proposition_existence_of_a_continuous_solution_main}.
        Let $\tau \in [t, T]$ and let a sequence $\{\tau_i\}_{i \in \mathbb{N}} \subset \mathcal{T}$ be such that $\tau_i \to \tau$ as $i \to \infty$.
        For any $i \in \mathbb{N}$, we have
        \begin{equation} \label{equality_for_every_n}
            R(\tau_i) f(\tau_i) + \int_{t}^{T} \frac{K(\tau_i, \eta) f(\eta)}{(T - \eta)^{2 - 2 \alpha}} \, \rd \eta
            = R(\tau_i) \hat{f}(\tau_i) + \int_{t}^{T} \frac{K(\tau_i, \eta) \hat{f}(\eta)}{(T - \eta)^{2 - 2 \alpha}} \, \rd \eta
            = g(\tau_i).
        \end{equation}
        Hence, owing to continuity of the functions $R(\cdot)$, $f(\cdot)$, $g(\cdot)$, and
        \begin{equation*}
            [0, T] \ni \tau \mapsto \int_{t}^{T} \frac{K(\tau, \eta) f(\eta)}{(T - \eta)^{2 - 2 \alpha}} \, \rd \eta \in \mathbb{R}^m,
        \end{equation*}
        we derive the desired equality by passing to the limit as $i \to \infty$ in \eqref{equality_for_every_n}.

        Finally, let us suppose that there is another solution $f^\prime(\cdot) \in \C([t, T], \mathbb{R}^m)$ of \eqref{proposition_existence_of_a_continuous_solution_main}.
        Then, considering $\f2^\prime(\cdot) \in \L2([t, T], \mathbb{R}^m)$ such that $\f2^\prime(\tau) = f^\prime(\tau) / (T - \tau)^{1 - \alpha}$ for a.e. $\tau \in [t, T]$, we find that $\f2^\prime(\cdot)$ satisfies \eqref{equality_f_ast}, and, therefore, $\f2^\prime(\cdot) = (\A2_{t}^{- 1} \g2)(\cdot) = \f2(\cdot)$.
        Consequently, and since both functions $f^\prime(\cdot)$ and $f(\cdot)$ are continuous, the equality $f^\prime(\tau) = f(\tau)$ is valid for all $\tau \in [t, T]$.

        In the case $t = T$, a unique solution of \eqref{proposition_existence_of_a_continuous_solution_main} is given by $f(T) = R(T)^{- 1} g(T)$.
        In addition, we have $\|f(T)\|_{\mathbb{R}^m} \leq \|g(T)\|_{\mathbb{R}^m} / \theta \leq \mu_{\A2} \|g(T)\|_{\mathbb{R}^m}$, which gives \eqref{proposition_existence_of_a_continuous_solution_main_2}, while estimate \eqref{proposition_existence_of_a_continuous_solution_main_3} holds automatically.
        The proposition is proved.
    \end{proof}

    Note that Proposition \ref{proposition_existence_of_a_continuous_solution} immediately implies
    \begin{corollary} \label{corollary_existence_of_a_continuous_solution}
        For any $t \in [0, T]$ and any $G(\cdot) \in \C([t, T], \mathbb{R}^{m \times m})$, there exists a unique solution $F(\cdot) \in \C([t, T], \mathbb{R}^{m \times m})$ of the Fredholm integral equation
        \begin{equation} \label{corollary_existence_of_a_continuous_solution_main}
            R(\tau) F(\tau) + \int_{t}^{T} \frac{K(\tau, \eta) F(\eta)}{(T - \eta)^{2 - 2 \alpha}} \, \rd \eta
            = G(\tau)
            \quad \forall \tau \in [t, T].
        \end{equation}
        This solution satisfies the estimates
        \begin{align*}
            \|F(\tau)\|_{\mathbb{R}^{m \times m}}
            & \leq m \mu_{\A2} \|G(\cdot)\|_{\C([t, T], \mathbb{R}^{m \times m})}, \\
            \|F(\tau^\prime) - F(\tau)\|_{\mathbb{R}^{m \times m}}
            & \leq \frac{m \omega_G(|\tau^\prime - \tau|)}{\theta} + m \|G(\cdot)\|_{\C([t, T], \mathbb{R}^{m \times m})} \omega_{\A2}(|\tau^\prime - \tau|)
        \end{align*}
        for all $\tau$, $\tau^\prime \in [t, T]$, where $\theta$ is the number from \eqref{theta}, $\omega_G(\cdot)$ is the modulus of continuity of the function $G(\cdot)$, and the number $\mu_{\A2}$ and the modulus of continuity $\omega_{\A2}(\cdot)$ are taken from Proposition \ref{proposition_existence_of_a_continuous_solution}.
    \end{corollary}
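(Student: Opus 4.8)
The plan is to reduce the matrix-valued Fredholm equation \eqref{corollary_existence_of_a_continuous_solution_main} to $m$ decoupled copies of the vector-valued equation \eqref{proposition_existence_of_a_continuous_solution_main} already solved in Proposition \ref{proposition_existence_of_a_continuous_solution}, by splitting everything into columns. Denote by $e_1, \ldots, e_m$ the standard basis of $\mathbb{R}^m$, and for a matrix $F(\cdot) \in \C([t, T], \mathbb{R}^{m \times m})$ write its $i$-th column as $f^{(i)}(\cdot) \doteq F(\cdot) e_i$; likewise put $g^{(i)}(\cdot) \doteq G(\cdot) e_i$. Multiplying \eqref{corollary_existence_of_a_continuous_solution_main} on the right by $e_i$ shows that $F(\cdot)$ solves the matrix equation if and only if, for each $i \in \overline{1, m}$, the column $f^{(i)}(\cdot)$ solves
\[
R(\tau) f^{(i)}(\tau) + \int_{t}^{T} \frac{K(\tau, \eta) f^{(i)}(\eta)}{(T - \eta)^{2 - 2 \alpha}} \, \rd \eta = g^{(i)}(\tau) \quad \forall \tau \in [t, T],
\]
which is precisely \eqref{proposition_existence_of_a_continuous_solution_main} with right-hand side $g^{(i)}(\cdot) \in \C([t, T], \mathbb{R}^m)$.

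First I would apply Proposition \ref{proposition_existence_of_a_continuous_solution} to each index $i \in \overline{1, m}$ in turn: this yields a unique continuous column $f^{(i)}(\cdot)$ solving the displayed equation and obeying estimates \eqref{proposition_existence_of_a_continuous_solution_main_2} and \eqref{proposition_existence_of_a_continuous_solution_main_3} with $g$ replaced by $g^{(i)}$. Assembling the $m$ columns produces a function $F(\cdot) \in \C([t, T], \mathbb{R}^{m \times m})$ that solves \eqref{corollary_existence_of_a_continuous_solution_main}, and the column-wise uniqueness furnishes uniqueness of $F(\cdot)$.

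It then remains to translate the per-column bounds into the claimed matrix estimates. Since $\|e_i\|_{\mathbb{R}^m} = 1$, I have $\|g^{(i)}(\tau)\|_{\mathbb{R}^m} \leq \|G(\tau)\|_{\mathbb{R}^{m \times m}}$ and an analogous bound for increments, so that $\|g^{(i)}(\cdot)\|_{\C([t, T], \mathbb{R}^m)} \leq \|G(\cdot)\|_{\C([t, T], \mathbb{R}^{m \times m})}$ and the modulus of continuity $\omega_{g^{(i)}}(\cdot)$ may be taken equal to $\omega_G(\cdot)$. To pass from columns to the operator norm, for $x \in \mathbb{R}^m$ with $\|x\|_{\mathbb{R}^m} \leq 1$ I would use $F(\tau) x = \sum_{i = 1}^{m} x_i f^{(i)}(\tau)$ together with $|x_i| \leq 1$ to obtain $\|F(\tau) x\|_{\mathbb{R}^m} \leq \sum_{i = 1}^{m} \|f^{(i)}(\tau)\|_{\mathbb{R}^m} \leq m \max_{i} \|f^{(i)}(\tau)\|_{\mathbb{R}^m}$, with the same crude bound applied to $F(\tau^\prime) - F(\tau)$. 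Combining this with the column estimates delivers the two asserted inequalities, the factor $m$ arising exactly from this step. I do not expect any genuine obstacle here: the entire content is the column decomposition, and the only point requiring a little care is the deliberately crude estimate of $\|F(\tau)\|_{\mathbb{R}^{m \times m}}$ by the sum of the norms of its columns, which is what produces the constant $m$.
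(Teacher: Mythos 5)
Your proof is correct and is essentially the argument the paper intends: the paper offers no separate proof, stating only that Proposition \ref{proposition_existence_of_a_continuous_solution} immediately implies the corollary, and your column-by-column reduction is precisely that implication spelled out. The crude bound of the operator norm of $F(\tau)$ (and of $F(\tau^\prime) - F(\tau)$) by the sum of the norms of its columns is indeed the step that accounts for the factor $m$ appearing in the corollary's estimates.
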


    \begin{proof}[Proof of Proposition \ref{proposition_M}]
        Since the function $K(\cdot, \cdot)$ is continuous, it follows from Corollary \ref{corollary_existence_of_a_continuous_solution} that, for any $t \in [0, T]$ and any $\xi \in [t, T]$, there exists a unique continuous function $[t, T] \ni \tau \mapsto M(\tau, \xi \mid t) \in \mathbb{R}^{m \times m}$ satisfying \eqref{M_definition}.
        Hence, in order to prove the first part of the proposition, it suffices to verify that the function $M(\cdot, \cdot \mid \cdot)$ defined in this way is continuous.

        Let us note that Corollary \ref{corollary_existence_of_a_continuous_solution} also yields the estimates
        \begin{align}
            \|M(\tau, \xi \mid t)\|_{\mathbb{R}^{m \times m}}
            & \leq \mu_M,
            \nonumber \\
            \label{omega_M^1}
            \|M(\tau^\prime, \xi \mid t) - M(\tau, \xi \mid t)\|_{\mathbb{R}^{m \times m}}
            & \leq \omega_M^{(1)}(|\tau^\prime - \tau|)
        \end{align}
        for all $\tau$, $\tau^\prime$, $\xi \in [t, T]$ and all $t \in [0, T]$ with the number $\mu_M \geq 0$ and the modulus of continuity $\omega_M^{(1)}(\cdot)$ given by
        \begin{equation*}
            \mu_M
            \doteq \frac{m \mu_{\A2} \mu_K}{\theta},
            \quad \omega_M^{(1)}(\delta)
            \doteq \frac{m \omega_K(\delta)}{\theta^2} + \frac{m \mu_K \omega_{\A2}(\delta)}{\theta}
            \quad \forall \delta
            \geq 0,
        \end{equation*}
        where the numbers $\theta$ and $\mu_K$ are taken from \eqref{theta} (see also \eqref{R^-1_estimate}) and \eqref{mu_K_mu_R}, respectively, and $\omega_K(\cdot)$ is the modulus of continuity of the function $K(\cdot, \cdot)$.

        Further, let us show that there exists a modulus of continuity $\omega_M^{(2)}(\cdot)$ such that
        \begin{equation} \label{omega_M^2}
            \|M(\tau, \xi^\prime \mid t) - M(\tau, \xi \mid t)\|_{\mathbb{R}^{m \times m}}
            \leq \omega_M^{(2)}(|\xi^\prime - \xi|)
        \end{equation}
        for all $\tau$, $\xi$, $\xi^\prime \in [t, T]$ and all $t \in [0, T]$.
        Let us take $t \in [0, T]$ and $\xi$, $\xi^\prime \in [t, T]$ and consider the functions
        \begin{equation*}
            F(\tau)
            \doteq M(\tau, \xi^\prime \mid t) - M(\tau, \xi \mid t),
            \quad G(\tau)
            \doteq - K(\tau, \xi^\prime) R(\xi^\prime)^{- 1} + K(\tau, \xi) R(\xi)^{- 1}
        \end{equation*}
        for all $\tau \in [t, T]$.
        The functions $F(\cdot)$ and $G(\cdot)$ are continuous and satisfy equality \eqref{corollary_existence_of_a_continuous_solution_main} due to \eqref{M_definition}.
        Moreover, we have
        \begin{equation*}
            \|G(\tau)\|_{\mathbb{R}^{m \times m}}
            \leq \mu_K \omega_{R^{- 1}} (|\xi^\prime - \xi|) + \frac{\omega_K(|\xi^\prime - \xi|)}{\theta}
            \quad \forall \tau \in [t, T],
        \end{equation*}
        where $\omega_{R^{- 1}}(\cdot)$ is the modulus of continuity of the function $R(\cdot)^{- 1}$.
        Hence, applying Corollary \ref{corollary_existence_of_a_continuous_solution}, we find that
        \begin{equation*}
            \|F(\tau)\|_{\mathbb{R}^{m \times m}}
            \leq m \mu_{\A2} \biggl( \mu_K \omega_{R^{- 1}} (|\xi^\prime - \xi|) + \frac{\omega_K(|\xi^\prime - \xi|)}{\theta} \biggr)
            \quad \forall \tau \in [t, T],
        \end{equation*}
        which implies
        \eqref{omega_M^2}.

        Arguing in a similar way, let us verify that there exists a modulus of continuity $\omega_M^{(3)}(\cdot)$ such that, for any $t \in [0, T]$, any $t^\prime \in [t, T]$, and any $\tau, \xi \in [t^\prime, T]$,
        \begin{equation} \label{omega_M^3}
            \|M(\tau, \xi \mid t^\prime) - M(\tau, \xi \mid t)\|_{\mathbb{R}^{m \times m}}
            \leq \omega_M^{(3)}(t^\prime - t).
        \end{equation}
        Let us take $t \in [0, T]$, $t^\prime \in [t, T]$, and $\xi \in [t^\prime, T]$ and consider the functions
        \begin{equation*}
            F(\tau)
            \doteq M(\tau, \xi \mid t^\prime) - M(\tau, \xi \mid t),
            \quad G(\tau)
            \doteq \int_{t}^{t^\prime} \frac{K(\tau, \eta) M(\eta, \xi \mid t)}{(T - \eta)^{2 - 2 \alpha}} \, \rd \eta
        \end{equation*}
        for all $\tau \in [t^\prime, T]$.
        The functions $F(\cdot)$ and $G(\cdot)$ are continuous and, in addition,
        \begin{equation} \label{F_equation_tau}
            R(\tau) F(\tau) + \int_{t^\prime}^{T} \frac{K(\tau, \eta) F(\eta)}{(T - \eta)^{2 - 2 \alpha}} \, \rd \eta
            = G(\tau)
            \quad \forall \tau \in [t^\prime, T].
        \end{equation}
        Furthermore, we derive
        \begin{equation*}
            \|G(\tau)\|_{\mathbb{R}^{m \times m}}
            \leq \frac{\mu_K \mu_M (t^\prime - t)^{2 \alpha - 1}}{2 \alpha - 1}
            \quad \forall \tau \in [t^\prime, T].
        \end{equation*}
        Thus, by Corollary \ref{corollary_existence_of_a_continuous_solution}, we get \eqref{omega_M^3}.

        Finally, it remains to observe that continuity of the function $M(\cdot, \cdot \mid \cdot)$ follows directly from relations \eqref{omega_M^1}, \eqref{omega_M^2}, and \eqref{omega_M^3}.

        Let us prove statement (i).
        Let $t \in [0, T)$ and $\vartheta \in (t, T)$ be fixed.
        Since the functions $K(\cdot, \cdot)$ and $M(\cdot, \cdot \mid \cdot)$ are continuous, there exist a number $\tilde{\mu} \geq 0$ and a modulus of continuity $\tilde{\omega}(\cdot)$ such that, for any $\tau$, $\xi \in [t, T]$ and any $\eta \in [t, \vartheta]$,
        \begin{align*}
            \| K(\tau, \eta) M(\eta, t \mid t) R(t) M(t, \xi \mid t) \|_{\mathbb{R}^{m \times m}}
            & \leq \tilde{\mu}, \\
            \biggl\| \frac{K(\tau, \eta) M(\eta, \xi \mid t)}{(T - \eta)^{2 - 2 \alpha}}
            - \frac{K(\tau, t) M(t, \xi \mid t)}{(T - t)^{2 - 2 \alpha}} \biggr\|_{\mathbb{R}^{m \times m}}
            & \leq \tilde{\omega}(\eta - t).
        \end{align*}
        Let us take $t^\prime \in (t, \vartheta]$ and $\xi \in [t^\prime, T]$ and consider the functions
        \begin{equation*}
            F(\tau)
            \doteq M(\tau, \xi \mid t^\prime) - M(\tau, \xi \mid t)
            + \frac{(t^\prime - t) M(\tau, t \mid t) R(t) M(t, \xi \mid t)}{(T - t)^{2 - 2 \alpha}}
        \end{equation*}
        and
        \begin{equation*}
            \begin{split}
                G(\tau)
                & \doteq \int_{t}^{t^\prime} \frac{K(\tau, \eta) M(\eta, \xi \mid t)}{(T - \eta)^{2 - 2 \alpha}} \, \rd \eta
                - \frac{(t^\prime - t) K(\tau, t) M(t, \xi \mid t)}{(T - t)^{2 - 2 \alpha}} \\
                & \quad - \frac{t^\prime - t}{(T - t)^{2 - 2 \alpha}}
                \int_{t}^{t^\prime} \frac{K(\tau, \eta) M(\eta, t \mid t)}{(T - \eta)^{2 - 2 \alpha}} \, \rd \eta
                \, R(t) M(t, \xi \mid t)
            \end{split}
        \end{equation*}
        for all $\tau \in [t^\prime, T]$.
        The functions $F(\cdot)$ and $G(\cdot)$ are continuous and, in view of definition \eqref{M_definition} of the function $M(\cdot, \cdot \mid \cdot)$, satisfy equality \eqref{F_equation_tau}.
        Moreover, we get
        \begin{equation*}
            \begin{split}
                \|G(\tau)\|_{\mathbb{R}^{m \times m}}
                & \leq \int_{t}^{t^\prime} \tilde{\omega}(\eta - t) \, \rd \eta
                + \frac{\tilde{\mu} (t^\prime - t)}{(T - t)^{2 - 2 \alpha}} \int_{t}^{t^\prime} \frac{\rd \eta}{(T - \eta)^{2 - 2 \alpha}} \\
                & \leq \tilde{\omega}(t^\prime - t) (t^\prime - t) + \frac{ \tilde{\mu} (t^\prime - t)^2}{(T - \vartheta)^{4 - 4 \alpha}}
                \quad \forall \tau \in [t^\prime, T],
            \end{split}
        \end{equation*}
        and, therefore, using Corollary \ref{corollary_existence_of_a_continuous_solution}, we obtain the required property.

        Thus, it remains to prove statement (ii).
        Let us note that, for every $t \in [0, T)$, the inverse operator $\A2_t^{- 1}$ can be represented as follows:
        \begin{equation*}
            (\A2_t^{- 1} \g2)(\tau)
            = R(\tau)^{- 1} \g2(\tau) + \frac{1}{(T - \tau)^{1 - \alpha}} \int_{t}^{T} \frac{M(\tau, \xi \mid t) \g2(\xi)}{(T - \xi)^{1 - \alpha}} \, \rd \xi
        \end{equation*}
        for a.e. $\tau \in [t, T]$ and every $\g2(\cdot) \in \L2([t, T], \mathbb{R}^m)$.
        This fact can be verified by a direct substitution on the basis of definition \eqref{M_definition} of the function $M(\cdot, \cdot \mid \cdot)$.
        Hence, and since the operator $\A2_{t}^{- 1}$ is self-adjoint and the matrices $R(\tau)^{- 1}$ for all $\tau \in [0, T]$ are symmetric, we conclude that $M(\tau, \xi \mid t) = M(\xi, \tau \mid t)^\top$ for a.e. $\tau \in [t, T]$ and a.e. $\xi \in [t, T]$ (see, e.g., \cite[Ch. 9, \S 8, Theorem 1]{Zaanen_1953} and \cite[Example 4.4.6]{Debnath_Mikusinski_2005}).
        Then, due to continuity of the function $M(\cdot, \cdot \mid \cdot)$, we get \eqref{M_symmetric}.
    \end{proof}

\section{Proofs of Section \ref{subsection_quadratic_functionals}}
\label{appendix_2}

    The proofs presented in this appendix are of technical nature and consist essentially of verification of boundedness, continuity, and differentiability properties of the mappings under consideration.
    For the sake of brevity, we will use the following shorthand notation.
    If pairs $(t, w(\cdot))$, $(t^\prime, w^\prime(\cdot)) \in \mathcal{G}$ are fixed and a family of functions $v(\cdot \mid \cdot, \cdot)$ is considered, we put (see \eqref{dist})
    \begin{equation} \label{notation_dist_v}
        v(\cdot)
        \doteq v(\cdot \mid t, w(\cdot)),
        \ \ v^\prime(\cdot)
        \doteq v(\cdot \mid t^\prime, w^\prime(\cdot)),
        \quad \dist
        \doteq \dist \bigl( (t^\prime, w^\prime(\cdot)), (t, w(\cdot)) \bigr).
    \end{equation}
    The corresponding notation will also be used for families of functions $\partial_t^\alpha v(\cdot \mid \cdot, \cdot)$ and $\nabla^\alpha v(\cdot \mid \cdot, \cdot)$.
    Furthermore, if $x(\cdot) \in \mathcal{X}(t, w(\cdot))$ and $t^\prime \in [t, T]$ are given, we will apply the same notation as above assuming that (see \eqref{x_t})
    \begin{equation} \label{notation_w^prime}
        w^\prime(\cdot)
        \doteq x_{t^\prime}(\cdot).
    \end{equation}

    \begin{proof}[Proof of Proposition \ref{proposition_v_int_t_T}]
        Let us put
        \begin{equation} \label{mu_W}
            \mu_W
            \doteq \max_{(\tau, \xi) \in \Omega}
            \|W(\tau, \xi)\|_{\mathbb{R}^{k \times k}}
        \end{equation}
        and denote by $\omega_W(\cdot)$ the modulus of continuity of the function $W(\cdot, \cdot)$.

        Let us show that conditions (i$_\star$) and (ii$_\star$) are satisfied.
        Note that we deal with the families of functions $\hat{v}(\cdot \mid \cdot, \cdot)$ and $\partial_t^\alpha \hat{v}(\cdot \mid \cdot, \cdot)$ only and omit the proof for $\nabla^\alpha \hat{v}(\cdot \mid \cdot, \cdot)$, since it can be carried out in a similar way.
        Let $\mathcal{G}_\ast \subset \mathcal{G}$ be a compact set, and let the corresponding number $\mu_v$ and modulus of continuity $\omega_v^{(2)}(\cdot)$ be chosen by conditions (i$_\star$) and (ii$_\star$) for the original family of functions $v(\cdot \mid \cdot, \cdot)$.

        Let $(t, w(\cdot)) \in \mathcal{G}_\ast$ be fixed.
        For any $\tau \in [t, T]$, we have (see \eqref{notation_dist_v})
        \begin{align*}
            \|\hat{v}(\tau)\|_{\mathbb{R}^k}
            & \leq \frac{T^{1 - \gamma} \mu_W \mu_v}{1 - \gamma}, \\
            \|\partial_t^\alpha \hat{v}(\tau)\|_{\mathbb{R}^k}
            & \leq \mu_W \mu_v \int_{\tau}^{T} \frac{\rd \xi}{(\xi - \tau)^\gamma (\xi - t)^\gamma}
            \leq \frac{T^{1 - 2 \gamma} \mu_W \mu_v}{1 - 2 \gamma},
        \end{align*}
        which implies the first part of condition (i$_\star$).
        Further, for any $\tau$, $\tau^\prime \in [t, T]$ with $\tau^\prime \geq \tau$, relying on Proposition \ref{proposition_technical_continuity_0}, we derive
        \begin{equation*}
            \begin{split}
                \| \hat{v} (\tau^\prime) - \hat{v}(\tau) \|_{\mathbb{R}^k}
                & \leq \biggl\| \int_{\tau^\prime}^{T} \frac{(W(\xi, \tau^\prime) - W(\xi, \tau)) v(\xi)}{(\xi - \tau^\prime)^\gamma} \, \rd \xi \biggr\|_{\mathbb{R}^k} \\
                & \quad + \biggl\| \int_{\tau^\prime}^{T} \frac{W(\xi, \tau) v(\xi)}{(\xi - \tau^\prime)^\gamma} \, \rd \xi
                - \int_{\tau}^{T} \frac{W(\xi, \tau) v(\xi)}{(\xi - \tau)^\gamma} \, \rd \xi \biggr\|_{\mathbb{R}^k} \\
                & \leq \frac{T^{1 - \gamma} \mu_v \omega_W(\tau^\prime - \tau)}{1 - \gamma}
                + \mu_W \mu_v \bar{\omega}_\gamma^{(1)}(\tau^\prime - \tau).
            \end{split}
        \end{equation*}
        In addition, due to Proposition \ref{proposition_technical_continuity_2}, we get
        \begin{equation*}
            \begin{split}
                \| \partial_t^\alpha \hat{v}(\tau^\prime) - \partial_t^\alpha \hat{v}(\tau) \|_{\mathbb{R}^k}
                & \leq \biggl\| \int_{\tau^\prime}^{T} \frac{(W(\xi, \tau^\prime) - W(\xi, \tau)) \partial_t^\alpha v(\xi)}
                {(\xi - \tau^\prime)^\gamma (\xi - t)^\gamma} \, \rd \xi \biggr\|_{\mathbb{R}^k} \\
                & \quad + \biggl\| \int_{\tau^\prime}^{T} \frac{W(\xi, \tau) \partial_t^\alpha v(\xi)}
                {(\xi - \tau^\prime)^\gamma (\xi - t)^\gamma} \, \rd \xi
                - \int_{\tau}^{T} \frac{W(\xi, \tau) \partial_t^\alpha v(\xi)}
                {(\xi - \tau)^\gamma (\xi - t)^\gamma} \, \rd \xi \biggr\|_{\mathbb{R}^k} \\
                & \leq \frac{T^{1 - 2 \gamma} \mu_v \omega_W(\tau^\prime - \tau)}{1 - 2 \gamma}
                + \mu_W \mu_v \bar{\omega}_\gamma^{(3)}(\tau^\prime - \tau).
            \end{split}
        \end{equation*}
        Consequently, we obtain the second part of condition (i$_\star$).

        Now, let $(t, w(\cdot))$, $(t^\prime, w^\prime(\cdot)) \in \mathcal{G}_\ast$ with $t^\prime \geq t$ be fixed and let $\tau \in [t^\prime, T]$.
        Then, we have (see \eqref{notation_dist_v})
        \begin{equation*}
            \| \hat{v}^\prime(\tau) - \hat{v}(\tau) \|_{\mathbb{R}^k}
            = \biggl\| \int_{\tau}^{T} \frac{W(\xi, \tau) (v^\prime(\xi) - v(\xi))}
            {(\xi - \tau)^\gamma} \, \rd \xi \biggr\|_{\mathbb{R}^k}
            \leq \frac{T^{1 - \gamma} \mu_W \omega_v^{(2)}(\dist)}{1 - \gamma}.
        \end{equation*}
        Moreover, using Proposition \ref{proposition_technical_continuity_2} once again, we derive
        \begin{align*}
            \| \partial_t^\alpha \hat{v}^\prime(\tau) - \partial_t^\alpha \hat{v}(\tau) \|_{\mathbb{R}^k}
            & \leq \biggl\| \int_{\tau}^{T} \frac{W(\xi, \tau)
            (\partial_t^\alpha v^\prime(\xi) - \partial_t^\alpha v(\xi))}
            {(\xi - \tau)^\gamma (\xi - t^\prime)^\gamma} \, \rd \xi \biggr\|_{\mathbb{R}^k} \\*
            & \quad + \biggl\| \int_{\tau}^{T} \frac{W(\xi, \tau) \partial_t^\alpha v(\xi)}
            {(\xi - \tau)^\gamma (\xi - t^\prime)^\gamma} \, \rd \xi
            - \int_{\tau}^{T} \frac{W(\xi, \tau) \partial_t^\alpha v(\xi)}
            {(\xi - \tau)^\gamma (\xi - t)^\gamma} \, \rd \xi \biggr\|_{\mathbb{R}^k} \\
            & \leq \frac{T^{1 - 2 \gamma} \mu_W \omega_v^{(2)} (\dist)}{1 - 2 \gamma}
            + \mu_W \mu_v \bar{\omega}_\gamma^{(3)} (\dist).
        \end{align*}
        Thus, we conclude that condition (ii$_\star$) is satisfied.

        Finally, let us verify condition (iii$_\star$).
        Let us fix $(t, w(\cdot)) \in \mathcal{G}_0$, $x(\cdot) \in \mathcal{X}(t, w(\cdot))$, and $\vartheta \in (t, T)$ and choose the corresponding number $\varkappa_v$ and infinitesimal modulus of continuity $\omega_v^\ast(\cdot)$ by condition (iii$_\star$) for $v(\cdot \mid \cdot, \cdot)$.
        Then, we derive (see \eqref{notation_w^prime})
        \begin{equation*}
            \begin{split}
                & \biggl\| \hat{v}^\prime(\tau) - \hat{v}(\tau)
                - \frac{\partial_t^\alpha \hat{v}(\tau) (t^\prime - t)
                + \nabla^\alpha \hat{v}(\tau) z(t^\prime)}{(T - t)^\beta} \biggr\|_{\mathbb{R}^k} \\
                & = \biggl\| \int_{\tau}^{T} \frac{W(\xi, \tau)}{(\xi - \tau)^\gamma}
                \biggl( v^\prime(\xi) - v(\xi)
                - \frac{\partial_t^\alpha v(\xi)(t^\prime - t) + \nabla^\alpha v(\xi) z(t^\prime)}
                {(T - t)^\beta (\xi - t)^\gamma} \biggr) \, \rd \xi \biggr\|_{\mathbb{R}^k} \\
                & \leq \mu_W \omega_v^\ast(t^\prime - t) \int_{\tau}^{T} \frac{\rd \xi}{(\xi - \tau)^\gamma (\xi - t^\prime)^{\varkappa_v}} \\
                & \leq \frac{T^{1 - \gamma} \mu_W \omega_v^\ast(t^\prime - t)}{(1 - \gamma) (\tau - t^\prime)^{\varkappa_v}}
                \quad \forall \tau \in (t^\prime, T] \quad \forall t^\prime \in (t, \vartheta],
            \end{split}
        \end{equation*}
        which implies condition (iii$_\star$) for $\hat{v}(\cdot \mid \cdot, \cdot)$ and completes the proof.
    \end{proof}

    \begin{proof}[Proof of Proposition \ref{proposition_v_int_t_ast_t}]
        We follow the scheme of the proof of Proposition \ref{proposition_v_int_t_T}.

        Let us show that conditions (i$_\star$) and (ii$_\star$) are satisfied (we omit the proof for $\nabla^\alpha \hat{v}(\cdot \mid \cdot, \cdot)$).
        Let $\mathcal{G}_\ast \subset \mathcal{G}$ be a compact set, and let the corresponding number $\mu_v$ and moduli of continuity $\omega_v^{(1)}(\cdot)$ and $\omega_v^{(2)}(\cdot)$ be chosen by conditions (i$_\star$) and (ii$_\star$) for the family of functions $v(\cdot \mid \cdot, \cdot)$.

        Let $(t, w(\cdot)) \in \mathcal{G}_\ast$ be fixed.
        For any $\tau \in [t, T]$, using \eqref{technical_beta}, we obtain
        \begin{equation*}
            \|\hat{v}(\tau)\|_{\mathbb{R}^k}
            \leq \frac{T^{1 - \gamma} \mu_W \mu_v}{1 - \gamma},
            \quad \| \partial_t^\alpha \hat{v}(\tau)\|_{\mathbb{R}^k}
            \leq \mathrm{B}(1 - \gamma, 1 - \gamma) T^{1 - \gamma} \mu_W \mu_v
            + T^\beta \mu_W \mu_v,
        \end{equation*}
        where the number $\mu_W$ is taken from \eqref{mu_W} and $\mathrm{B}(\cdot, \cdot)$ is the beta-function.
        Further, for any $\tau$, $\tau^\prime \in [t, T]$ with $\tau^\prime \geq \tau$, in view of Proposition \ref{proposition_technical_continuity_0}, we get
        \begin{equation*}
            \begin{split}
                \| \hat{v}(\tau^\prime) - \hat{v}(\tau) \|_{\mathbb{R}^k}
                & \leq \biggl\| \int_{t}^{\tau^\prime} \frac{W(\tau^\prime, \xi) v(\xi)}{(\tau^\prime - \xi)^\gamma} \, \rd \xi
                - \int_{t}^{\tau} \frac{W(\tau^\prime, \xi) v(\xi)}{(\tau - \xi)^\gamma} \, \rd \xi \biggr\|_{\mathbb{R}^k} \\
                & \quad + \biggl\| \int_{t}^{\tau} \frac{(W(\tau^\prime, \xi) - W(\tau, \xi)) v(\xi)}{(\tau - \xi)^\gamma} \, \rd \xi \biggr\|_{\mathbb{R}^k} \\
                & \leq \mu_W \mu_v \bar{\omega}_\gamma^{(1)} (\tau^\prime - \tau) + \frac{T^{1 - \gamma} \mu_v \omega_W(\tau^\prime - \tau)}{1 - \gamma},
            \end{split}
        \end{equation*}
        where $\omega_W(\cdot)$ is the modulus of continuity of the function $W(\cdot, \cdot)$.
        Moreover, due to formula \eqref{technical_beta} and Proposition \ref{proposition_technical_continuity}, we derive
        \begin{equation*}
            \begin{split}
                & \| \partial_t^\alpha \hat{v}(\tau^\prime) - \partial_t^\alpha \hat{v}(\tau) \|_{\mathbb{R}^k} \\
                & \leq \biggl\|(\tau^\prime - t)^\gamma \int_{t}^{\tau^\prime}
                \frac{W(\tau^\prime, \xi) \partial_t^\alpha v(\xi)}
                {(\tau^\prime - \xi)^\gamma (\xi - t)^\gamma} \, \rd \xi
                - (\tau - t)^\gamma \int_{t}^{\tau} \frac{W(\tau^\prime, \xi) \partial_t^\alpha v(\xi)}
                {(\tau - \xi)^\gamma (\xi - t)^\gamma} \, \rd \xi \biggr\|_{\mathbb{R}^k} \\
                & \quad + (\tau - t)^\gamma \biggl\|  \int_{t}^{\tau}
                \frac{(W(\tau^\prime, \xi) - W(\tau, \xi)) \partial_t^\alpha v(\xi)}
                {(\tau - \xi)^\gamma (\xi - t)^\gamma} \, \rd \xi \biggr\|_{\mathbb{R}^k} \\
                & \quad + (T - t)^\beta \| (W(\tau^\prime, t) - W(\tau, t)) v(t) \|_{\mathbb{R}^k} \\
                & \leq \mu_W \mu_v \bar{\omega}_{\gamma, \gamma}^{(2)}(\tau^\prime - \tau)
                + \mathrm{B}(1 - \gamma, 1 - \gamma) T^{1 - \gamma} \mu_v \omega_W(\tau^\prime - \tau)
                + T^\beta \mu_v \omega_W(\tau^\prime - \tau),
            \end{split}
        \end{equation*}
        which completes the proof of condition (i$_\star$).

        Now, let $(t, w(\cdot))$, $(t^\prime, w^\prime(\cdot)) \in \mathcal{G}_\ast$ with $t^\prime \geq t$ be fixed and let $\tau \in [t^\prime, T]$.
        We have
        \begin{equation*}
            \begin{split}
                \|\hat{v}^\prime (\tau) - \hat{v}(\tau)\|_{\mathbb{R}^k}
                & \leq \biggl\| \int_{t^\prime}^{\tau} \frac{W(\tau, \xi) (v^\prime(\xi) - v(\xi))}
                {(\tau - \xi)^\gamma} \, \rd \xi \biggr\|_{\mathbb{R}^k}
                + \biggl\| \int_{t}^{t^\prime} \frac{W(\tau, \xi) v(\xi)}{(\tau - \xi)^\gamma} \, \rd \xi \biggr\|_{\mathbb{R}^k} \\
                & \leq \frac{T^{1 - \gamma} \mu_W \omega_v^{(2)}(\dist)}{1 - \gamma}
                + \frac{\mu_W \mu_v \dist^{1 - \gamma}}{1 - \gamma}.
            \end{split}
        \end{equation*}
        Furthermore, we obtain
        \begin{equation*}
            \begin{split}
                & \| \partial_t^\alpha \hat{v}^\prime(\tau) - \partial_t^\alpha \hat{v}(\tau)\|_{\mathbb{R}^k} \\
                & \leq (\tau - t^\prime)^\gamma \biggl\| \int_{t^\prime}^{\tau} \frac{W(\tau, \xi)
                (\partial_t^\alpha v^\prime(\xi) - \partial_t^\alpha v(\xi))}{(\tau - \xi)^\gamma (\xi - t^\prime)^\gamma} \, \rd \xi \biggr\|_{\mathbb{R}^k} \\
                & \quad + \biggl\| (\tau - t^\prime)^\gamma \int_{t^\prime}^{\tau} \frac{W(\tau, \xi) \partial_t^\alpha v(\xi)}
                {(\tau - \xi)^\gamma (\xi - t^\prime)^\gamma} \, \rd \xi
                - (\tau - t)^\gamma \int_{t}^{\tau} \frac{W(\tau, \xi) \partial_t^\alpha v(\xi)}
                {(\tau - \xi)^\gamma (\xi - t)^\gamma} \, \rd \xi \biggr\|_{\mathbb{R}^k} \\
                & \quad + \| (T - t^\prime)^\beta W(\tau, t^\prime) v^\prime(t^\prime)
                - (T - t)^\beta W(\tau, t) v(t) \|_{\mathbb{R}^k} \\
                & \doteq \mathscr{A}_1 + \mathscr{A}_2 + \mathscr{A}_3.
            \end{split}
        \end{equation*}
        According to formula \eqref{technical_beta} and Proposition \ref{proposition_technical_continuity}, we get
        \begin{equation*}
            \mathscr{A}_1
            \leq \mathrm{B}(1 - \gamma, 1 - \gamma) T^{1 - \gamma} \mu_W \omega_v^{(2)}(\dist),
            \quad \mathscr{A}_2
            \leq \mu_W \mu_v \bar{\omega}_{\gamma, \gamma}^{(2)}(\dist).
        \end{equation*}
        In addition, we derive
        \begin{equation*}
            \begin{split}
                \mathscr{A}_3
                & \leq \| ((T - t^\prime)^\beta - (T - t)^\beta) W(\tau, t^\prime) v^\prime(t^\prime) \|_{\mathbb{R}^k} \\
                & \quad + (T - t)^\beta \bigl( \|(W(\tau, t^\prime) - W(\tau, t)) v^\prime(t^\prime)\|_{\mathbb{R}^k} \\
                & \quad + \| W(\tau, t) (v^\prime(t^\prime) - v(t^\prime)) \|_{\mathbb{R}^k}
                + \| W(\tau, t) (v(t^\prime) - v(t))\|_{\mathbb{R}^k} \bigr) \\
                & \leq \mu_W \mu_v \dist^\beta
                + T^\beta \bigl( \mu_v \omega_W(\dist)
                + \mu_W \omega_v^{(2)}(\dist)
                + \mu_W \omega_v^{(1)}(\dist) \bigr),
            \end{split}
        \end{equation*}
        and, hence, condition (ii$_\star$) is proved.

        Further, let us fix $(t, w(\cdot)) \in \mathcal{G}_0$, $x(\cdot) \in \mathcal{X}(t, w(\cdot))$, and $\vartheta \in (t, T)$ and choose the corresponding number $\varkappa_v$ and infinitesimal modulus of continuity $\omega_v^\ast(\cdot)$ by condition (iii$_\star$) for $v(\cdot \mid \cdot, \cdot)$.
        Moreover, according to condition (i$_\star$) for $v(\cdot \mid \cdot, \cdot)$, let us take the number $\mu_v$ and modulus of continuity $\omega_v^{(1)}(\cdot)$ that correspond to the compact set $\mathcal{G}_\ast \doteq \{(t, w(\cdot))\}$.
        For any $t^\prime \in (t, \vartheta]$ and any $\tau \in (t^\prime, T]$, we have
        \begin{equation*}
            \begin{split}
                & \biggl\| \hat{v}^\prime(\tau) - \hat{v}(\tau)
                - \frac{\partial_t^\alpha \hat{v}(\tau) (t^\prime - t)
                + \nabla^\alpha \hat{v}(\tau) z(t^\prime)}{(T - t)^\beta (\tau - t)^\gamma} \biggr\|_{\mathbb{R}^k} \\
                & \leq \biggl\| \int_{t^\prime}^{\tau} \frac{W(\tau, \xi)}{(\tau - \xi)^\gamma}
                \biggl( v^\prime(\xi) - v(\xi)
                - \frac{\partial_t^\alpha v(\xi) (t^\prime - t) + \nabla^\alpha v(\xi) z(t^\prime)}{(T - t)^\beta (\xi - t)^\gamma} \biggr) \, \rd \xi \biggr\|_{\mathbb{R}^k} \\
                & \quad + \frac{1}{(T - t)^\beta} \biggl\| \int_{t}^{t^\prime} \frac{W(\tau, \xi)
                \bigl( \partial_t^\alpha v(\xi)(t^\prime - t) + \nabla^\alpha v(\xi) z(t^\prime) \bigr)}
                {(\tau - \xi)^\gamma (\xi - t)^\gamma} \, \rd \xi \biggr\|_{\mathbb{R}^k} \\
                & \quad + \biggl\| \int_{t}^{t^\prime} \frac{W(\tau, \xi) v(\xi)}{(\tau - \xi)^\gamma} \, \rd \xi
                - \frac{(t^\prime - t) W(\tau, t) v(t)}{(\tau - t)^\gamma} \biggr\|_{\mathbb{R}^k} \\
                & \doteq \mathscr{B}_1 + \mathscr{B}_2 + \mathscr{B}_3.
            \end{split}
        \end{equation*}
        Applying \eqref{technical_beta}, we obtain
        \begin{equation*}
            \mathscr{B}_1
            \leq \mu_W \omega_v^\ast(t^\prime - t) \int_{t^\prime}^{\tau} \frac{\rd \xi}{(\tau - \xi)^\gamma (\xi - t^\prime)^{\varkappa_v}}
            \leq \frac{\mathrm{B}(1 - \gamma, 1 - \varkappa_v) T^{1 - \varkappa_v} \mu_W \omega_v^\ast(t^\prime - t)}{(\tau - t^\prime)^\gamma}.
        \end{equation*}
        Using estimate \eqref{z_estimate}, where the number $\mu_x^\ast$ is given by \eqref{mu_x^ast}, we get
        \begin{equation*}
            \mathscr{B}_2
            \leq \frac{\mu_W \mu_v (1 + \mu_x^\ast) (t^\prime - t)}{(T - \vartheta)^\beta}
            \int_{t}^{t^\prime} \frac{\rd \xi}{(\tau - \xi)^\gamma (\xi - t)^\gamma}
            \leq \frac{\mu_W \mu_v (1 + \mu_x^\ast) (t^\prime - t)^{2 - \gamma}}{(1 - \gamma) (T - \vartheta)^\beta (\tau - t^\prime)^\gamma}.
        \end{equation*}
        Finally, if $\gamma > 0$, we derive
        \begin{equation*}
            \begin{split}
                \mathscr{B}_3
                & \leq \biggl\| \int_{t}^{t^\prime} \frac{(W(\tau, \xi) - W(\tau, t)) v(\xi)}
                {(\tau - \xi)^\gamma} \, \rd \xi \biggr\|_{\mathbb{R}^k}
                + \biggl\| W(\tau, t) \int_{t}^{t^\prime} \frac{v(\xi) - v(t)}
                {(\tau - \xi)^\gamma} \, \rd \xi \biggr\|_{\mathbb{R}^k} \\
                & \quad + \biggl\| W(\tau, t) v(t) \int_{t}^{t^\prime}
                \biggl( \frac{1}{(\tau - \xi)^\gamma} - \frac{1}{(\tau - t)^\gamma} \biggr) \, \rd \xi \biggr\|_{\mathbb{R}^k} \\
                & \leq \frac{\mu_v \omega_W(t^\prime - t) (t^\prime - t)}{(\tau - t^\prime)^\gamma}
                + \frac{\mu_W \omega_v^{(1)}(t^\prime - t) (t^\prime - t)}{(\tau - t^\prime)^\gamma}
                + \frac{\mu_W \mu_v (t^\prime - t)^{\gamma + 1}}{(\tau - t^\prime)^{2 \gamma}}.
            \end{split}
        \end{equation*}
        Otherwise, the same estimate for $\mathscr{B}_3$ holds but without the last term.
        Thus, we conclude that condition (iii$_\star$) is satisfied for $\hat{v}(\cdot \mid \cdot, \cdot)$.
        The proposition is proved.
    \end{proof}

    \begin{proof}[Proof of Proposition
    \ref{proposition_v_int_M}]
        The proof follows the same lines as those of Propositions \ref{proposition_v_int_t_T} and \ref{proposition_v_int_t_ast_t}.

        Let us denote by $\omega_N(\cdot)$ and $\omega_{\partial_t N}(\cdot)$ the moduli of continuity of the functions $N(\cdot, \cdot \mid \cdot)$ and $\partial_t N(\cdot, \cdot \mid \cdot)$ and put
        \begin{equation*}
            \mu_N
            \doteq \max_{(\tau, \xi, t) \in \Theta} \|N(\tau, \xi \mid t)\|_{\mathbb{R}^{k \times k}},
            \quad \mu_{\partial_t N}
            \doteq \max_{(\tau, \xi, t) \in \Theta} \|\partial_t N(\tau, \xi \mid t)\|_{\mathbb{R}^{k \times k}}.
        \end{equation*}

        Let us verify conditions (i$_\star$) and (ii$_\star$) (the proof for $\nabla^\alpha \hat{v}(\cdot \mid \cdot, \cdot)$ is omitted).
        Let $\mathcal{G}_\ast \subset \mathcal{G}$ be a compact set, and let the corresponding number $\mu_v$ and moduli of continuity $\omega_v^{(1)}(\cdot)$ and $\omega_v^{(2)}(\cdot)$ be chosen by conditions (i$_\star$) and (ii$_\star$) for $v(\cdot \mid \cdot, \cdot)$.

        Let $(t, w(\cdot)) \in \mathcal{G}_\ast$ be fixed.
        For any $\tau \in [t, T]$, we have
        \begin{align*}
            \| \hat{v}(\tau) \|_{\mathbb{R}^k}
            & \leq \frac{T^{1 - \sigma} \mu_N \mu_v}{1 - \sigma}, \\
            \|\partial_t^\alpha \hat{v}(\tau)\|_{\mathbb{R}^k}
            & \leq \frac{T^{1 + \hat{\beta} - 2 \sigma} \mu_{\partial_t N} \mu_v}{1 - \sigma}
            + \frac{T^{1 + \hat{\beta} - \beta - \sigma} \mu_N \mu_v}{1 - \sigma}
            + T^{\hat{\beta} - \sigma} \mu_N \mu_v.
        \end{align*}
        For any $\tau$, $\tau^\prime \in [t, T]$ with $\tau^\prime \geq \tau$, we derive
        \begin{equation*}
            \begin{split}
                \| \hat{v}(\tau^\prime) - \hat{v}(\tau) \|_{\mathbb{R}^k}
                & = \biggl\| \int_{t}^{T} \frac{(N(\tau^\prime, \xi \mid t) - N(\tau, \xi \mid t)) v(\xi)}{(T - \xi)^\sigma} \, \rd \xi \biggr\|_{\mathbb{R}^k} \\
                & \leq \frac{T^{1 - \sigma} \mu_v \omega_N(\tau^\prime - \tau)}{1 - \sigma}
            \end{split}
        \end{equation*}
        and
        \begin{equation*}
            \begin{split}
                & \| \partial_t^\alpha \hat{v}(\tau^\prime) - \partial_t^\alpha \hat{v}(\tau) \|_{\mathbb{R}^k} \\
                & \leq (T - t)^{\hat{\beta} - \sigma}
                \biggl\| \int_{t}^{T} \frac{(\partial_t N(\tau^\prime, \xi \mid t) - \partial_t N(\tau, \xi \mid t)) v(\xi)}{(T - \xi)^\sigma} \, \rd \xi \biggr\|_{\mathbb{R}^k} \\
                & \quad + (T - t)^{\hat{\beta} - \beta}
                \biggl\| \int_{t}^{T} \frac{(N(\tau^\prime, \xi \mid t) - N(\tau, \xi \mid t)) \partial_t^\alpha v(\xi)}{(T - \xi)^\sigma} \, \rd \xi \biggr\|_{\mathbb{R}^k} \\
                & \quad + (T - t)^{\hat{\beta} - \sigma}
                \| (N(\tau^\prime, t \mid t) - N(\tau, t \mid t)) v(t) \|_{\mathbb{R}^k} \\
                & \leq \frac{T^{1 + \hat{\beta} - 2 \sigma} \mu_v \omega_{\partial_t N}(\tau^\prime - \tau)}{1 - \sigma}
                + \frac{T^{1 + \hat{\beta} - \beta - \sigma} \mu_v \omega_N(\tau^\prime - \tau)}{1 - \sigma}
                + T^{\hat{\beta} - \sigma} \mu_v \omega_N(\tau^\prime - \tau).
            \end{split}
        \end{equation*}
        Hence, condition (i$_\star$) is proved.

        Now, let $(t, w(\cdot))$, $(t^\prime, w^\prime(\cdot)) \in \mathcal{G}_\ast$ with $t^\prime \geq t$ be fixed and let $\tau \in [t^\prime, T]$.
        We get
        \begin{equation} \label{estimate_hat_v_basic}
            \begin{split}
                \| \hat{v}^\prime(\tau) - \hat{v}(\tau) \|_{\mathbb{R}^k}
                & \leq \biggl\| \int_{t^\prime}^{T} \frac{(N(\tau, \xi \mid t^\prime) - N(\tau, \xi \mid t)) v^\prime(\xi)}
                {(T - \xi)^\sigma} \, \rd \xi \biggr\|_{\mathbb{R}^k} \\
                & \quad + \biggl\| \int_{t^\prime}^{T} \frac{N(\tau, \xi \mid t) (v^\prime(\xi) - v(\xi))}
                {(T - \xi)^\sigma} \, \rd \xi \biggr\|_{\mathbb{R}^k} \\
                & \quad + \biggl\| \int_{t}^{t^\prime} \frac{N(\tau, \xi \mid t) v(\xi)}{(T - \xi)^\sigma} \, \rd \xi \biggr\|_{\mathbb{R}^k} \\
                & \leq \frac{T^{1 - \sigma} \mu_v \omega_N(\dist)}{1 - \sigma}
                + \frac{T^{1 - \sigma} \mu_N \omega_v^{(2)}(\dist)}{1 - \sigma}
                + \frac{\mu_N \mu_v \dist^{1 - \sigma}}{1 - \sigma}.
            \end{split}
        \end{equation}
        In addition, we obtain
        \begin{equation*}
            \begin{split}
                & \| \partial_t^\alpha \hat{v}^\prime(\tau) - \partial_t^\alpha \hat{v}(\tau) \|_{\mathbb{R}^k} \\
                & \leq ((T - t)^{\hat{\beta} - \sigma} - (T - t^\prime)^{\hat{\beta} - \sigma})
                \biggl\| \int_{t^\prime}^{T} \frac{\partial_t N(\tau, \xi \mid t^\prime) v^\prime(\xi)}
                {(T - \xi)^\sigma} \, \rd \xi \biggr\|_{\mathbb{R}^k} \\
                & \quad + (T - t)^{\hat{\beta} - \sigma} \biggl\| \int_{t^\prime}^{T} \frac{\partial_t N(\tau, \xi \mid t^\prime) v^\prime(\xi)}
                {(T - \xi)^\sigma} \, \rd \xi
                - \int_{t}^{T} \frac{\partial_t N(\tau, \xi \mid t) v(\xi)}
                {(T - \xi)^\sigma} \, \rd \xi \biggr\|_{\mathbb{R}^k} \\
                & \quad + ((T - t)^{\hat{\beta} - \beta} - (T - t^\prime)^{\hat{\beta} - \beta})
                \biggl\| \int_{t^\prime}^{T} \frac{N(\tau, \xi \mid t^\prime) \partial_t^\alpha v^\prime(\xi)}
                {(T - \xi)^\sigma} \, \rd \xi \biggr\|_{\mathbb{R}^k} \\
                & \quad + (T - t)^{\hat{\beta} - \beta}
                \biggl\| \int_{t^\prime}^{T} \frac{N(\tau, \xi \mid t^\prime) \partial_t^\alpha v^\prime(\xi)}
                {(T - \xi)^\sigma} \, \rd \xi
                - \int_{t}^{T} \frac{N(\tau, \xi \mid t) \partial_t^\alpha v(\xi)}
                {(T - \xi)^\sigma} \, \rd \xi \biggr\|_{\mathbb{R}^k} \\
                & \quad + ((T - t)^{\hat{\beta} - \sigma} - (T - t^\prime)^{\hat{\beta} - \sigma})
                \|N(\tau, t^\prime \mid t^\prime) v^\prime(t^\prime)\|_{\mathbb{R}^k} \\
                & \quad + (T - t)^{\hat{\beta} - \sigma}
                \bigl\| N(\tau, t^\prime \mid t^\prime) v^\prime(t^\prime) - N(\tau, t \mid t) v(t) \bigr\|_{\mathbb{R}^k} \\
                & \doteq \mathscr{A}_1 + \mathscr{A}_2 + \mathscr{A}_3 + \mathscr{A}_4 + \mathscr{A}_5 + \mathscr{A}_6.
            \end{split}
        \end{equation*}
        If $\hat{\beta} = \sigma$, we have $\mathscr{A}_1 = \mathscr{A}_5 = 0$, and, otherwise,
        \begin{equation*}
            \mathscr{A}_1
            \leq \frac{T^{1 - \sigma} \mu_{\partial_t N} \mu_v \dist^{\hat{\beta} - \sigma}}{1 - \sigma},
            \quad \mathscr{A}_5
            \leq \mu_N \mu_v \dist^{\hat{\beta} - \sigma}.
        \end{equation*}
        If $\hat{\beta} = \beta$, we have $\mathscr{A}_3 = 0$, and, otherwise,
        \begin{equation*}
            \mathscr{A}_3
            \leq \frac{T^{1 - \sigma} \mu_N \mu_v \dist^{\hat{\beta} - \beta}}{1 - \sigma}.
        \end{equation*}
        Moreover, similarly to \eqref{estimate_hat_v_basic}, we derive
        \begin{align*}
            \mathscr{A}_2
            & \leq T^{\hat{\beta} - \sigma} \biggl( \frac{T^{1 - \sigma} \mu_v \omega_{\partial_t N}(\dist)}{1 - \sigma}
            + \frac{T^{1 - \sigma} \mu_{\partial_t N} \omega_v^{(2)}(\dist)}{1 - \sigma}
            + \frac{\mu_{\partial_t N} \mu_v \dist^{1 - \sigma}}{1 - \sigma} \biggr), \\
            \mathscr{A}_4
            & \leq T^{\hat{\beta} - \beta} \biggl( \frac{T^{1 - \sigma} \mu_v \omega_N(\dist)}{1 - \sigma}
            + \frac{T^{1 - \sigma} \mu_N \omega_v^{(2)}(\dist)}{1 - \sigma}
            + \frac{\mu_N \mu_v \dist^{1 - \sigma}}{1 - \sigma} \biggr).
        \end{align*}
        Finally, we get
        \begin{equation*}
            \begin{split}
                \mathscr{A}_6
                & \leq T^{\hat{\beta} - \sigma} \bigl( \| (N(\tau, t^\prime \mid t^\prime) - N(\tau, t \mid t))  v^\prime(t^\prime) \|_{\mathbb{R}^k}
                + \| N(\tau, t \mid t) (v^\prime(t^\prime) - v(t^\prime)) \|_{\mathbb{R}^k} \\
                & \quad + \| N(\tau, t \mid t) (v(t^\prime) - v(t)) \|_{\mathbb{R}^k} \bigr) \\
                & \leq T^{\hat{\beta} - \sigma} \bigl( \mu_v \omega_N(2^{1 / 2} \dist)
                + \mu_N \omega_v^{(2)}(\dist)
                + \mu_N \omega_v^{(1)}(\dist) \bigr),
            \end{split}
        \end{equation*}
        which completes the proof of condition (ii$_\star$).

        Let $(t, w(\cdot)) \in \mathcal{G}_0$, $x(\cdot) \in \mathcal{X}(t, w(\cdot))$, and $\vartheta \in (t, T)$ be fixed.
        Let us choose the corresponding number $\varkappa_v$ and infinitesimal modulus of continuity $\omega_v^\ast(\cdot)$ by condition (iii$_\star$) for $v(\cdot \mid \cdot, \cdot)$.
        Furthermore, according to conditions (i$_\star$) and (ii$_\star$) for $v(\cdot \mid \cdot, \cdot)$, let us take the number $\mu_v$ and moduli of continuity $\omega_v^{(1)}(\cdot)$ and $\omega_v^{(2)}(\cdot)$ that correspond to the compact set (see \eqref{x_t})
        \begin{equation} \label{G_ast_by_x}
            \mathcal{G}_\ast
            \doteq \bigl\{ (t^\prime, x_{t^\prime}(\cdot)) \in \mathcal{G} \colon
            t^\prime \in [t, \vartheta] \bigr\}.
        \end{equation}
        For any $t^\prime \in (t, \vartheta]$ and any $\tau \in (t^\prime, T]$, we have
        \begin{equation*}
            \begin{split}
                & \biggl\| \hat{v}^\prime(\tau) - \hat{v}(\tau)
                - \frac{\partial_t^\alpha \hat{v}(\tau) (t^\prime - t)
                + \nabla^\alpha \hat{v}(\tau) z(t^\prime)}{(T - t)^{\hat{\beta}}} \biggr\|_{\mathbb{R}^k} \\
                & \leq \biggl\| \int_{t^\prime}^{T} \frac{(N(\tau, \xi \mid t^\prime) - N(\tau, \xi \mid t)) v^\prime(\xi)}
                {(T - \xi)^\sigma} \, \rd \xi
                - \frac{t^\prime - t}{(T - t)^{\sigma}} \int_{t}^{T} \frac{\partial_t N(\tau, \xi \mid t) v(\xi)}
                {(T - \xi)^\sigma} \, \rd \xi \biggr\|_{\mathbb{R}^k} \\
                & \quad + \biggl\| \int_{t^\prime}^{T} \frac{N(\tau, \xi \mid t)}{(T - \xi)^\sigma}
                \biggl( v^\prime(\xi) - v(\xi)
                - \frac{\partial_t^\alpha v(\xi) (t^\prime - t)
                + \nabla^\alpha v(\xi) z(t^\prime)}{(T - t)^\beta} \biggr) \, \rd \xi \biggr\|_{\mathbb{R}^k} \\
                & \quad + \frac{1}{(T - t)^\beta} \biggl\| \int_{t}^{t^\prime} \frac{N(\tau, \xi \mid t)
                \bigl( \partial_t^\alpha v(\xi) (t^\prime - t) + \nabla^\alpha v(\xi) z(t^\prime) \bigr)}{(T - \xi)^\sigma} \, \rd \xi \biggr\|_{\mathbb{R}^k} \\
                & \quad + \biggl\| \int_{t}^{t^\prime} \frac{N(\tau, \xi \mid t) v(\xi)}{(T - \xi)^\sigma} \, \rd \xi
                - \frac{(t^\prime - t) N(\tau, t \mid t) v(t)}{(T - t)^\sigma} \biggr\|_{\mathbb{R}^k} \\
                & \doteq \mathscr{B}_1 + \mathscr{B}_2 + \mathscr{B}_3 + \mathscr{B}_4.
            \end{split}
        \end{equation*}
        We obtain
        \begin{equation*}
            \begin{split}
                \mathscr{B}_1
                & \leq \biggl\| \int_{t^\prime}^{T} \biggl( N(\tau, \xi \mid t^\prime) - N(\tau, \xi \mid t)
                - \frac{(t^\prime - t) \partial_t N(\tau, \xi \mid t)}{(T - t)^\sigma} \biggr)
                \frac{v^\prime(\xi)}{(T - \xi)^\sigma} \, \rd \xi \biggr\|_{\mathbb{R}^k} \\
                & \quad + \frac{t^\prime - t}{(T - t)^\sigma}
                \biggl\| \int_{t^\prime}^{T} \frac{\partial_t N(\tau, \xi \mid t) (v^\prime(\xi) - v(\xi))}
                {(T - \xi)^\sigma} \, \rd \xi \biggr\|_{\mathbb{R}^k} \\
                & \quad + \frac{t^\prime - t}{(T - t)^\sigma}
                \biggl\| \int_{t}^{t^\prime} \frac{\partial_t N(\tau, \xi \mid t) v(\xi)}{(T - \xi)^\sigma} \, \rd \xi \biggr\|_{\mathbb{R}^k} \\
                & \leq \frac{T^{1 - \sigma} \mu_v \omega_N^\ast(t^\prime - t)}{1 - \sigma}
                + \frac{T^{1 - \sigma} \mu_{\partial_t N} \omega_v^{(2)}(\dist) (t^\prime - t)}{(1 - \sigma) (T - \vartheta)^\sigma}
                + \frac{\mu_{\partial_t N} \mu_v (t^\prime - t)^2}{(T - \vartheta)^{2 \sigma}},
            \end{split}
        \end{equation*}
        where, in accordance with the introduced notation (see \eqref{notation_dist_v} and \eqref{notation_w^prime}),
        \begin{equation} \label{dist_along_x}
            \dist
            \doteq \dist\bigl( (t^\prime, x_{t^\prime}(\cdot)), (t, w(\cdot)) \bigr)
        \end{equation}
        and $\omega_N^\ast(\cdot)$ is chosen by condition \eqref{omega_N^ast}.
        In view of \eqref{z_estimate} and \eqref{technical_beta}, we get
        \begin{equation*}
            \mathscr{B}_2
            \leq \frac{\mathrm{B}(1 - \sigma, 1 - \varkappa_v) T^{1 - \varkappa_v} \mu_N \omega_v^\ast(t^\prime - t)}{(T - \vartheta)^\sigma},
            \quad \mathscr{B}_3
            \leq \frac{\mu_N \mu_v (1 + \mu_x^\ast) (t^\prime - t)^2}{(T - \vartheta)^{\beta + \sigma}},
        \end{equation*}
        where the number $\mu_x^\ast$ is given by \eqref{mu_x^ast}.
        Finally, if $\sigma > 0$, we derive
        \begin{align*}
            \mathscr{B}_4
            & \leq \biggl\| \int_{t}^{t^\prime} \frac{(N(\tau, \xi \mid t) - N(\tau, t \mid t)) v(\xi)}
            {(T - \xi)^\sigma} \, \rd \xi \biggr\|_{\mathbb{R}^k} \\*
            & \quad + \biggl\| N(\tau, t \mid t)
            \int_{t}^{t^\prime} \frac{v(\xi) - v(t)}{(T - \xi)^\sigma} \, \rd \xi \biggr\|_{\mathbb{R}^k} \\*
            & \quad + \biggl\| N(\tau, t \mid t) v(t) \int_{t}^{t^\prime}
            \biggl( \frac{1}{(T - \xi)^\sigma} - \frac{1}{(T - t)^\sigma} \biggr) \, \rd \xi \biggr\|_{\mathbb{R}^k} \\
            & \leq \frac{\mu_v \omega_N(t^\prime - t) (t^\prime - t)}{(T - \vartheta)^\sigma}
            + \frac{\mu_N \omega_v^{(1)}(t^\prime - t) (t^\prime - t)}{(T - \vartheta)^\sigma}
            + \frac{\mu_N \mu_v (t^\prime - t)^{1 + \sigma}}{(T - \vartheta)^{2 \sigma}}.
        \end{align*}
        Otherwise, the same estimate for $\mathscr{B}_4$ holds but without the last term.
        Thus, noting that the value $\dist$ from \eqref{dist_along_x} tends to $0$ as $t^\prime \to t^+$, we conclude that condition (iii$_\star$) is satisfied.
        The proposition is proved.
    \end{proof}

    \begin{proof}[Proof of Lemma \ref{lemma_terminal_functional}]
        In order to prove that the functional $\varphi$ is continuous, it suffices to take a compact set $\mathcal{G}_\ast \subset \mathcal{G}$ and show that $\varphi$ is continuous on $\mathcal{G}_\ast$.
        By conditions (i$_\star$) and (ii$_\star$) for the families of functions $v_1(\cdot \mid \cdot, \cdot)$ and $v_2(\cdot \mid \cdot, \cdot)$, let us choose the numbers $\mu_{v_1}$ and $\mu_{v_2}$ and moduli of continuity $\omega_{v_1}^{(2)}(\cdot)$ and $\omega_{v_2}^{(2)}(\cdot)$ that correspond to $\mathcal{G}_\ast$.
        Then, for any $(t, w(\cdot))$, $(t^\prime, w^\prime(\cdot)) \in \mathcal{G}_\ast$, we have
        (see \eqref{notation_dist_v})
        \begin{equation*}
            \begin{split}
                & | \varphi(t^\prime, w^\prime(\cdot)) - \varphi(t, w(\cdot)) | \\
                & \leq \| v_1^\prime(T) - v_1(T) \|_{\mathbb{R}^k}
                \| v_2^\prime(T)\|_{\mathbb{R}^k}
                + \|v_1(T)\|_{\mathbb{R}^k}
                \|v_2^\prime(T) - v_2(T)\|_{\mathbb{R}^k} \\
                & \leq \mu_{v_2} \omega_{v_1}^{(2)}(\dist)
                + \mu_{v_1} \omega_{v_2}^{(2)}(\dist),
            \end{split}
        \end{equation*}
        which implies that $\varphi$ is continuous on $\mathcal{G}_\ast$.
        In a similar way, it can be verified that, for every compact set $\mathcal{G}_\ast \subset \mathcal{G}$, the mappings
        \begin{align*}
            \mathcal{G}_\ast \ni (t, w(\cdot)) & \mapsto
            \langle \partial_t^\alpha v_i(T \mid t, w(\cdot)), v_{3 - i}(T \mid t, w(\cdot)) \rangle_{\mathbb{R}^k}
            \in \mathbb{R}, \\
            \mathcal{G}_\ast \ni (t, w(\cdot)) & \mapsto
            \nabla^\alpha v_i(T \mid t, w(\cdot))^\top v_{3 - i}(T \mid t, w(\cdot)) \in \mathbb{R}^n
        \end{align*}
        for $i \in \{1, 2\}$ are continuous, which yields continuity of the mappings $\partial_t^\alpha \varphi \colon \mathcal{G}_0 \to \mathbb{R}$ and $\nabla^\alpha \varphi \colon \mathcal{G}_0 \to \mathbb{R}^n$ from \eqref{varphi_ci_derivative_t} and \eqref{varphi_ci_gradient}.

        It remains to show that $\varphi$ is $ci$-differentiable of order $\alpha$ at a point $(t, w(\cdot)) \in \mathcal{G}_0$ with $\partial_t^\alpha \varphi(t, w(\cdot))$ and $\nabla^\alpha \varphi(t, w(\cdot))$ given by \eqref{varphi_ci_derivative_t} and \eqref{varphi_ci_gradient}.
        Let $x(\cdot) \in \mathcal{X}(t, w(\cdot))$ and $\vartheta \in (t, T)$ be fixed.
        In accordance with condition (iii$_\star$) for $v_1(\cdot \mid \cdot, \cdot)$ and $v_2(\cdot \mid \cdot, \cdot)$, let us choose the corresponding numbers $\varkappa_{v_1}$ and $\varkappa_{v_2}$ and infinitesimal moduli of continuity $\omega_{v_1}^\ast(\cdot)$ and $\omega_{v_2}^\ast(\cdot)$.
        In addition, by conditions (i$_\star$) and (ii$_\star$), let us take the numbers $\mu_{v_1}$ and $\mu_{v_2}$ and modulus of continuity $\omega_{v_2}^{(2)}(\cdot)$ that correspond to the compact set $\mathcal{G}_\ast$ defined by \eqref{G_ast_by_x}.
        Then, for any $t^\prime \in (t, \vartheta]$, we derive (see \eqref{notation_w^prime})
        \begin{equation*}
            \begin{split}
                & |\varphi(t^\prime, x_{t^\prime}(\cdot)) - \varphi(t, w(\cdot))
                - \partial_t^\alpha \varphi(t, w(\cdot)) (t^\prime - t)
                - \langle \nabla^\alpha \varphi(t, w(\cdot)), z(t^\prime) \rangle_{\mathbb{R}^n}| \\
                & \leq \biggl| \langle v_1^\prime(T) - v_1(T), v_2^\prime(T) \rangle_{\mathbb{R}^k}
                - \frac{\langle \partial_t^\alpha v_1(T) (t^\prime - t)
                + \nabla^\alpha v_1(T) z(t^\prime), v_2(T) \rangle_{\mathbb{R}^k}}
                {(T - t)^{\beta_1 + \gamma_1}} \biggr| \\
                & \quad + \biggl| \langle v_1(T), v_2^\prime(T) - v_2(T) \rangle_{\mathbb{R}^k}
                - \frac{\langle \partial_t^\alpha v_2(T) (t^\prime - t)
                + \nabla^\alpha v_2(T) z(t^\prime), v_1(T) \rangle_{\mathbb{R}^k}}
                {(T - t)^{\beta_2 + \gamma_2}} \biggr| \\
                & \doteq \mathscr{A}_1 + \mathscr{A}_2,
            \end{split}
        \end{equation*}
        where $z(t^\prime)$ is given by \eqref{z}.
        Recalling estimate \eqref{z_estimate}, we get
        \begin{equation*}
            \begin{split}
                \mathscr{A}_1
                & \leq \biggl| \biggl\langle v_1^\prime(T) - v_1(T)
                - \frac{\partial_t^\alpha v_1(T) (t^\prime - t)
                + \nabla^\alpha v_1(T) z(t^\prime)}{(T - t)^{\beta_1 + \gamma_1}},
                v_2^\prime(T) \biggr\rangle_{\mathbb{R}^k} \biggr| \\
                & \quad + \biggl| \frac{\langle \partial_t^\alpha v_1(T) (t^\prime - t) + \nabla^\alpha v_1(T) z(t^\prime),
                v_2^\prime(T) - v_2(T) \rangle_{\mathbb{R}^k}}{(T - t)^{\beta_1 + \gamma_1}} \biggr| \\
                & \leq \frac{\mu_{v_2} \omega_{v_1}^\ast(t^\prime - t)}{(T - \vartheta)^{\varkappa_{v_1}}}
                + \frac{\mu_{v_1} (1 + \mu_x^\ast) \omega_{v_2}^{(2)}(\dist) (t^\prime - t)}{(T - \vartheta)^{\beta_1 + \gamma_1}},
            \end{split}
        \end{equation*}
        where $\mu_x^\ast$ is the number from \eqref{mu_x^ast}, and, similarly,
        \begin{equation*}
            \mathscr{A}_2
            \leq \frac{\mu_{v_1} \omega_{v_2}^\ast(t^\prime - t)}{(T - \vartheta)^{\varkappa_{v_2}}}.
        \end{equation*}
        As a result, since the value $\dist$ tends to $0$ as $t^\prime \to t^+$ (see
        \eqref{dist_along_x}), we obtain that $\varphi$ is $ci$-differentiable of order $\alpha$ at $(t, w(\cdot))$ and equalities \eqref{varphi_ci_derivative_t} and \eqref{varphi_ci_gradient} hold.
    \end{proof}

    \begin{proof}[Proof of Lemma \ref{lemma_integral_functional}]
        The proof follows the same lines as that of Lemma \ref{lemma_terminal_functional}.

        For $i \in \{1, 2\}$, according to conditions (i$_\star$) and (ii$_\star$), let us choose the number $\mu_{v_i}$ and moduli of continuity $\omega_{v_i}^{(1)}(\cdot)$ and $\omega_{v_i}^{(2)}(\cdot)$ that correspond to a compact set $\mathcal{G}_\ast \subset \mathcal{G}$.
        Then, for any $(t, w(\cdot))$, $(t^\prime, w^\prime(\cdot)) \in \mathcal{G}_\ast$ with $t^\prime \geq t$, we have
        \begin{equation*}
            \begin{split}
                & | \varphi(t^\prime, w^\prime(\cdot)) - \varphi(t, w(\cdot)) | \\
                & \leq \biggl| \int_{t^\prime}^{T} \frac{\langle v_1^\prime(\tau) - v_1(\tau), v_2^\prime(\tau) \rangle_{\mathbb{R}^k}}{(T - \tau)^\sigma} \, \rd \tau \biggr|
                + \biggl| \int_{t^\prime}^{T} \frac{\langle v_1(\tau), v_2^\prime(\tau) - v_2(\tau) \rangle_{\mathbb{R}^k}}{(T - \tau)^\sigma} \, \rd \tau \biggr| \\
                & \quad + \biggl| \int_{t}^{t^\prime} \frac{\langle v_1(\tau),
                v_2(\tau) \rangle_{\mathbb{R}^k}}{(T - \tau)^\sigma} \, \rd \tau \biggr| \\
                & \leq \frac{T^{1 - \sigma} \bigl( \mu_{v_2} \omega_{v_1}^{(2)}(\dist) + \mu_{v_1} \omega_{v_2}^{(2)}(\dist) \bigr)}{1 - \sigma}
                + \frac{\mu_{v_1} \mu_{v_2} \dist^{1 - \sigma}}{1 - \sigma},
            \end{split}
        \end{equation*}
        which implies continuity of the functional $\varphi$ on the set $\mathcal{G}_\ast$, and, therefore, on the whole space $\mathcal{G}$.
        Further, let us note that, for $i \in \{1, 2\}$, the functional
        \begin{equation*}
            \psi_i(t, w(\cdot))
            \doteq (T - t)^{\sigma \vee \gamma_i} \int_{t}^{T}
            \frac{\langle \partial_t^\alpha v_i(\tau \mid t, w(\cdot)), v_{3 - i}(\tau \mid t, w(\cdot)) \rangle_{\mathbb{R}^k}}
            {(T - \tau)^\sigma (\tau - t)^{\gamma_i}} \, \rd \tau
        \end{equation*}
        for all $(t, w(\cdot)) \in \mathcal{G}_\ast$ is continuous.
        Indeed, for any $(t, w(\cdot))$, $(t^\prime, w^\prime(\cdot)) \in \mathcal{G}_\ast$ with $t^\prime \geq t$, using formula \eqref{technical_beta} and Proposition \ref{proposition_technical_continuity}, we derive
        \begin{equation*}
            \begin{split}
                & | \psi_i(t^\prime, w^\prime(\cdot)) - \psi_i(t, w(\cdot)) | \\
                & \leq (T - t^\prime)^{\sigma \vee \gamma_i} \biggl| \int_{t^\prime}^{T}
                \frac{\langle \partial_t^\alpha v_i^\prime(\tau) - \partial_t^\alpha v_i(\tau), v_{3 - i}^\prime(\tau) \rangle_{\mathbb{R}^k}}
                {(T - \tau)^\sigma (\tau - t^\prime)^{\gamma_i}} \, \rd \tau \biggr| \\
                & \quad + (T - t^\prime)^{\sigma \vee \gamma_i} \biggl| \int_{t^\prime}^{T}
                \frac{\langle \partial_t^\alpha v_i(\tau), v_{3 - i}^\prime(\tau) - v_{3 - i}(\tau) \rangle_{\mathbb{R}^k}}
                {(T - \tau)^\sigma (\tau - t^\prime)^{\gamma_i}} \, \rd \tau \biggr| \\
                & \quad + \biggl| (T - t^\prime)^{\sigma \vee \gamma_i} \int_{t^\prime}^{T}
                \frac{\langle \partial_t^\alpha v_i(\tau), v_{3 - i}(\tau) \rangle_{\mathbb{R}^k}}
                {(T - \tau)^\sigma (\tau - t^\prime)^{\gamma_i}} \, \rd \tau \\
                & \quad - (T - t)^{\sigma \vee \gamma_i} \int_{t}^{T}
                \frac{\langle \partial_t^\alpha v_i(\tau), v_{3 - i}(\tau) \rangle_{\mathbb{R}^k}}
                {(T - \tau)^\sigma (\tau - t)^{\gamma_i}} \, \rd \tau \biggr| \\
                & \leq \mathrm{B}(1 - \sigma, 1 - \gamma_i) T^{1 + \sigma \vee \gamma_i - \sigma - \gamma_i}
                \bigl( \mu_{v_{3 - i}} \omega_{v_i}^{(2)}(\dist) + \mu_{v_i} \omega_{v_{3 - i}}^{(2)}(\dist) \bigr) \\
                & \quad + \mu_{v_i} \mu_{v_{3 - i}} \bar{\omega}_{\sigma, \gamma_i}^{(2)}(\dist).
            \end{split}
        \end{equation*}
        In addition, for any $(t, w(\cdot))$, $(t^\prime, w^\prime(\cdot)) \in \mathcal{G}_\ast$ with $t^\prime \geq t$, we get
        \begin{equation*}
            \begin{split}
                & | \langle v_1^\prime(t^\prime), v_2^\prime(t^\prime) \rangle_{\mathbb{R}^k}
                - \langle v_1(t), v_2(t) \rangle_{\mathbb{R}^k} | \\
                & \leq \|v_2^\prime(t^\prime)\|_{\mathbb{R}^k} \bigl( \|v_1^\prime (t^\prime) - v_1(t^\prime)\|_{\mathbb{R}^k}
                + \|v_1(t^\prime) - v_1(t)\|_{\mathbb{R}^k} \bigr) \\
                & \quad + \|v_1(t)\|_{\mathbb{R}^k} \bigl( \|v_2^\prime(t^\prime) - v_2(t^\prime)\|_{\mathbb{R}^k}
                + \|v_2(t^\prime) - v_2(t) \|_{\mathbb{R}^k} \bigr) \\
                & \leq \mu_{v_2} \bigl( \omega_{v_1}^{(2)}(\dist) + \omega_{v_1}^{(1)}(\dist) \bigr)
                + \mu_{v_1} \bigl( \omega_{v_2}^{(2)}(\dist) + \omega_{v_2}^{(1)}(\dist) \bigr).
            \end{split}
        \end{equation*}
        Thus, we obtain that $\partial_t^\alpha \varphi \colon \mathcal{G}_0 \to \mathbb{R}$ from \eqref{varphi_integral_ci_derivative_t} is continuous.
        Similarly to the proof of continuity of the functional $\psi_i$ above, it can be verified that the mappings
        \begin{equation*}
            \mathcal{G}_\ast \ni (t, w(\cdot))
            \mapsto (T - t)^{\sigma \vee \gamma_i} \int_{t}^{T} \frac{\nabla^\alpha v_i(\tau \mid t, w(\cdot))^\top v_{3 - i}(\tau \mid t, w(\cdot))}
            {(T - \tau)^\sigma (\tau - t)^{\gamma_i}} \, \rd \tau \in \mathbb{R}^n
        \end{equation*}
        for $i \in \{1, 2\}$ are continuous, which gives continuity of $\nabla^\alpha \varphi \colon \mathcal{G}_0 \to \mathbb{R}^n$ from
        \eqref{varphi_integral_ci_gradient}.

        Now, let us show that $\varphi$ is $ci$-differentiable of order $\alpha$ at a point $(t, w(\cdot)) \in \mathcal{G}_0$ with $\partial_t^\alpha \varphi(t, w(\cdot))$ and $\nabla^\alpha \varphi(t, w(\cdot))$ from \eqref{varphi_integral_ci_derivative_t} and \eqref{varphi_integral_ci_gradient}.
        Let $x(\cdot) \in \mathcal{X}(t, w(\cdot))$ and $\vartheta \in (t, T)$ be fixed.
        For $i \in \{1, 2\}$, let us choose the corresponding number $\varkappa_{v_i}$ and infinitesimal modulus of continuity $\omega_{v_i}^\ast(\cdot)$ according to condition (iii$_\ast$).
        In addition, for $i \in \{1, 2\}$, using conditions (i$_\star$) and (ii$_\star$), let us take the number $\mu_{v_i}$ and moduli of continuity $\omega_{v_i}^{(1)}(\cdot)$ and $\omega_{v_i}^{(2)}(\cdot)$ that correspond to the compact set $\mathcal{G}_\ast$ defined by \eqref{G_ast_by_x}.
        For any $t^\prime \in (t, \vartheta]$, we derive
        \begin{equation*}
            \begin{split}
                & | \varphi(t^\prime, x_{t^\prime}(\cdot)) - \varphi(t, w(\cdot))
                - \partial_t^\alpha \varphi(t, w(\cdot)) (t^\prime - t)
                - \langle \nabla^\alpha \varphi(t, w(\cdot)), z(t^\prime) \rangle_{\mathbb{R}^n} | \\
                & \leq \biggl| \int_{t^\prime}^{T} \frac{\langle v_1^\prime(\tau) - v_1(\tau),
                v_2^\prime(\tau) \rangle_{\mathbb{R}^k}}{(T - \tau)^\sigma} \, \rd \tau \\
                & \quad - \frac{1}{(T - t)^{\beta_1}} \int_{t}^{T}
                \frac{\langle \partial_t^\alpha v_1(\tau) (t^\prime - t) + \nabla^\alpha v_1(\tau) z(t^\prime), v_2(\tau) \rangle_{\mathbb{R}^k}}
                {(T - \tau)^\sigma (\tau - t)^{\gamma_1}} \, \rd \tau \biggr| \\
                & \quad + \biggl| \int_{t^\prime}^{T} \frac{\langle v_1(\tau),
                v_2^\prime(\tau) - v_2(\tau) \rangle_{\mathbb{R}^k}}{(T - \tau)^\sigma} \, \rd \tau \\
                & \quad - \frac{1}{(T - t)^{\beta_2}} \int_{t}^{T}
                \frac{\langle \partial_t^\alpha v_2(\tau) (t^\prime - t) + \nabla^\alpha v_2(\tau) z(t^\prime), v_1(\tau) \rangle_{\mathbb{R}^k}}
                {(T - \tau)^\sigma (\tau - t)^{\gamma_2}} \, \rd \tau \biggr| \\
                & \quad + \biggl| \int_{t}^{t^\prime} \frac{\langle v_1(\tau),
                v_2(\tau) \rangle_{\mathbb{R}^k}}{(T - \tau)^\sigma} \, \rd \tau
                - \frac{(t^\prime - t) \langle v_1(t), v_2(t) \rangle_{\mathbb{R}^k}}{(T - t)^\sigma} \biggr| \\
                & \doteq \mathscr{A}_1 + \mathscr{A}_2 + \mathscr{A}_3,
            \end{split}
        \end{equation*}
        where $z(t^\prime)$ is given by \eqref{z}.
        In view of \eqref{z_estimate} and \eqref{technical_beta}, we get
        \begin{equation*}
            \begin{split}
                \mathscr{A}_1
                & \leq \biggl| \int_{t^\prime}^{T} \biggl\langle v_1^\prime(\tau) - v_1(\tau)
                - \frac{\partial_t^\alpha v_1(\tau) (t^\prime - t) + \nabla^\alpha v_1(\tau) z(t^\prime)}{(T - t)^{\beta_1} (\tau - t)^{\gamma_1}},
                \frac{v_2^\prime(\tau)}{(T - \tau)^\sigma} \biggr\rangle_{\mathbb{R}^k} \, \rd \tau \biggr| \\
                & \quad + \frac{1}{(T - t)^{\beta_1}} \biggl| \int_{t^\prime}^{T}
                \frac{\langle \partial_t^\alpha v_1(\tau) (t^\prime - t) + \nabla^\alpha v_1(\tau) z(t^\prime),
                v_2^\prime(\tau) - v_2(\tau) \rangle_{\mathbb{R}^k}}
                {(T - \tau)^\sigma (\tau - t)^{\gamma_1}} \, \rd \tau \biggr| \\
                & \quad + \frac{1}{(T - t)^{\beta_1}} \biggl| \int_{t}^{t^\prime}
                \frac{\langle \partial_t^\alpha v_1(\tau) (t^\prime - t) + \nabla^\alpha v_1(\tau) z(t^\prime),
                v_2(\tau) \rangle_{\mathbb{R}^k}}
                {(T - \tau)^\sigma (\tau - t)^{\gamma_1}} \, \rd \tau \biggr| \\
                & \leq \frac{\mathrm{B}(1 - \sigma, 1 - \varkappa_{v_1}) T^{1 - \varkappa_{v_1}} \mu_{v_2} \omega_{v_1}^\ast(t^\prime - t)}
                {(T - \vartheta)^\sigma} \\
                & \quad + \frac{\mathrm{B}(1 - \sigma, 1 - \gamma_1) T^{1 - \gamma_1} \mu_{v_1} (1 + \mu_x^\ast) \omega_{v_2}^{(2)}(\dist) (t^\prime - t)}{(T - \vartheta)^{\beta_1 + \sigma}} \\
                & \quad + \frac{\mu_{v_1} (1 + \mu_x^\ast) \mu_{v_2} (t^\prime - t)^{2 - \gamma_1}}{(1 - \gamma_1)(T - \vartheta)^{\beta_1 + \sigma}},
            \end{split}
        \end{equation*}
        where $\mu_x^\ast$ is the number from \eqref{mu_x^ast}, and, similarly,
        \begin{equation*}
            \mathscr{A}_2
            \leq \frac{\mathrm{B}(1 - \sigma, 1 - \varkappa_{v_2}) T^{1 - \varkappa_{v_2}} \mu_{v_1} \omega_{v_2}^\ast(t^\prime - t)}
            {(T - \vartheta)^\sigma}
            + \frac{\mu_{v_2} (1 + \mu_x^\ast) \mu_{v_1} (t^\prime - t)^{2 - \gamma_2}}{(1 - \gamma_2)(T - \vartheta)^{\beta_2 + \sigma}}.
        \end{equation*}
        Finally, if $\sigma > 0$, we have
        \begin{equation*}
            \begin{split}
                \mathscr{A}_3
                & \leq \biggl|\int_{t}^{t^\prime} \frac{\langle v_1(\tau) - v_1(t),
                v_2(\tau) \rangle_{\mathbb{R}^k}}{(T - \tau)^\sigma} \, \rd \tau \biggr|
                + \biggl| \int_{t}^{t^\prime} \frac{\langle v_1(t),
                v_2(\tau) - v_2(t) \rangle_{\mathbb{R}^k}}{(T - \tau)^\sigma} \, \rd \tau \biggr| \\
                & \quad + \biggl| \langle v_1(t), v_2(t) \rangle_{\mathbb{R}^k}
                \int_{t}^{t^\prime} \biggl( \frac{1}{(T - \tau)^\sigma} - \frac{1}{(T - t)^\sigma} \biggr) \, \rd \tau \biggr| \\
                & \leq \frac{\bigl( \mu_{v_2} \omega_{v_1}^{(1)}(t^\prime - t) + \mu_{v_1} \omega_{v_2}^{(1)}(t^\prime - t) \bigr)
                (t^\prime - t)}{(T - \vartheta)^\sigma}
                + \frac{\mu_{v_1} \mu_{v_2} (t^\prime - t)^{1 + \sigma}}{(T - \vartheta)^{2 \sigma}}.
            \end{split}
        \end{equation*}
        Otherwise, the same estimate for $\mathscr{A}_3$ holds but without the last term.
        Consequently, $\varphi$ is $ci$-differentiable of order $\alpha$ at $(t, w(\cdot))$ and equalities \eqref{varphi_integral_ci_derivative_t} and \eqref{varphi_integral_ci_gradient} hold.
    \end{proof}

\section{Auxiliary statements}
\label{appendix_3}

    We start by noting that, for any $\beta < 1$, any $\gamma < 1$, and any $t$, $t^\prime \in [0, T]$ with $t^\prime > t$, the equality
    \begin{equation} \label{technical_beta}
        \int_{t}^{t^\prime} \frac{\rd \tau}{(t^\prime - \tau)^\beta (\tau - t)^\gamma}
        = (t^\prime - t)^{1 - \beta - \gamma} \mathrm{B}(1 - \beta, 1 - \gamma)
    \end{equation}
    is valid (see, e.g., \cite[Theorem D.6]{Diethelm_2010}), where $\mathrm{B}(\cdot, \cdot)$ is the beta-function.

    \begin{proposition} \label{proposition_technical_continuity_0}
        For any $\beta \in [0, 1)$, there exists a modulus of continuity $\bar{\omega}_\beta^{(1)} (\cdot)$ such that, for any $t$, $t^\prime \in [0, T]$ with $t^\prime \geq t$, any $k \in \mathbb{N}$, and any $v(\cdot) \in \C([t, t^\prime], \mathbb{R}^k)$, the functions
        \begin{equation*}
            \hat{v}(\tau)
            \doteq \int_{t}^{\tau} \frac{v(\xi)}{(\tau - \xi)^\beta} \, \rd \xi,
            \quad \hat{v}^\prime(\tau)
            \doteq \int_{\tau}^{t^\prime} \frac{v(\xi)}{(\xi - \tau)^\beta} \, \rd \xi
            \quad \forall \tau \in [t, t^\prime]
        \end{equation*}
        satisfy the estimate
        \begin{equation*}
            \|\hat{v}(\tau^\prime) - \hat{v}(\tau)\|_{\mathbb{R}^k}
            \vee \|\hat{v}^\prime(\tau^\prime) - \hat{v}^\prime(\tau)\|_{\mathbb{R}^k}
            \leq \|v(\cdot)\|_{\C([t, t^\prime], \mathbb{R}^k)} \bar{\omega}_\beta^{(1)}(|\tau^\prime - \tau|)
            \ \ \forall \tau, \tau^\prime \in [t, t^\prime].
        \end{equation*}
    \end{proposition}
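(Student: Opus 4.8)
The plan is to reduce both estimates to a single one by a reflection, and then to split the increment of the first integral into a boundary-layer term and a kernel-difference term, each of which I will control uniformly by a power of $|\tau^\prime - \tau|$. First I would observe that $\hat v^\prime$ is obtained from $\hat v$ by the change of variable $\xi \mapsto t + t^\prime - \xi$: writing $\sigma \doteq t + t^\prime - \tau$ and $\tilde v(\eta) \doteq v(t + t^\prime - \eta)$, one gets $\hat v^\prime(\tau) = \int_{t}^{\sigma} \tilde v(\eta) (\sigma - \eta)^{-\beta} \, \rd \eta$, which is exactly the first operator applied to the reflected function $\tilde v(\cdot) \in \C([t, t^\prime], \mathbb{R}^k)$. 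Since $\|\tilde v(\cdot)\|_{\C([t, t^\prime], \mathbb{R}^k)} = \|v(\cdot)\|_{\C([t, t^\prime], \mathbb{R}^k)}$ and the map $\tau \mapsto \sigma$ is an isometry of $[t, t^\prime]$, the estimate for $\hat v^\prime$ follows from the one for $\hat v$. Hence it suffices to treat $\hat v$, and, fixing $\tau, \tau^\prime \in [t, t^\prime]$, I may assume without loss of generality that $\tau^\prime \geq \tau$.

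Writing $\mu \doteq \|v(\cdot)\|_{\C([t, t^\prime], \mathbb{R}^k)}$ for brevity, I would then decompose
\[
    \hat v(\tau^\prime) - \hat v(\tau)
    = \int_{t}^{\tau} v(\xi) \bigl( (\tau^\prime - \xi)^{-\beta} - (\tau - \xi)^{-\beta} \bigr) \, \rd \xi
    + \int_{\tau}^{\tau^\prime} \frac{v(\xi)}{(\tau^\prime - \xi)^\beta} \, \rd \xi.
\]
For the second (boundary-layer) term the bound $\mu (\tau^\prime - \tau)^{1 - \beta} / (1 - \beta)$ is immediate. For the first term I would use that, for $\xi < \tau \leq \tau^\prime$ and $\beta \geq 0$, the kernel is nonincreasing in its time argument, so that $(\tau^\prime - \xi)^{-\beta} - (\tau - \xi)^{-\beta} \leq 0$; taking norms, integrating, and computing the two elementary integrals explicitly yields
\[
    \int_{t}^{\tau} \bigl( (\tau - \xi)^{-\beta} - (\tau^\prime - \xi)^{-\beta} \bigr) \, \rd \xi
    = \frac{(\tau - t)^{1 - \beta} - (\tau^\prime - t)^{1 - \beta} + (\tau^\prime - \tau)^{1 - \beta}}{1 - \beta}.
\]

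The only point requiring care, and the place where the uniformity in $t$ and $\tau$ is secured, is bounding this last expression by a quantity depending on $\tau^\prime - \tau$ alone. This follows purely from monotonicity of $s \mapsto s^{1 - \beta}$ on $[0, \infty)$: since $\tau - t \leq \tau^\prime - t$ and $1 - \beta > 0$, the difference $(\tau - t)^{1 - \beta} - (\tau^\prime - t)^{1 - \beta}$ is nonpositive, so the numerator is at most $(\tau^\prime - \tau)^{1 - \beta}$. Adding the two contributions, I would conclude that $\|\hat v(\tau^\prime) - \hat v(\tau)\|_{\mathbb{R}^k} \leq 2 \mu (\tau^\prime - \tau)^{1 - \beta} / (1 - \beta)$, whence the choice $\bar{\omega}_\beta^{(1)}(\delta) \doteq 2 \delta^{1 - \beta} / (1 - \beta)$ — which depends only on $\beta$, is continuous and nondecreasing on $[0, \infty)$, and vanishes at $0$ — settles the proposition. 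I expect no genuine obstacle here; the one thing to watch is that the cancellation in the kernel-difference term be exploited through monotonicity of the power function rather than a crude pointwise estimate, so that the resulting modulus of continuity is truly independent of $t$, $t^\prime$, $k$, and $v(\cdot)$.
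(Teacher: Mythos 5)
Your proof is correct and follows essentially the same route as the paper, which does not write the argument out but defers to the standard scheme of \cite[Theorem 3.6]{Samko_Kilbas_Marichev_1993} and \cite[Theorem 2.6]{Diethelm_2010}: split the increment into the boundary-layer integral over $[\tau, \tau^\prime]$ and the kernel-difference integral over $[t, \tau]$, bound each by $\|v(\cdot)\|_{\C([t, t^\prime], \mathbb{R}^k)} (\tau^\prime - \tau)^{1 - \beta} / (1 - \beta)$ using monotonicity of the power function, and take $\bar{\omega}_\beta^{(1)}(\delta) \doteq 2 \delta^{1 - \beta} / (1 - \beta)$, which indeed depends on $\beta$ only. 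Your reflection $\xi \mapsto t + t^\prime - \xi$ reducing $\hat{v}^\prime$ to $\hat{v}$ is likewise the change-of-variables device the paper itself invokes for the analogous statement (see the end of the proof of Proposition \ref{proposition_technical_continuity}, citing \cite[equation (2.19)]{Samko_Kilbas_Marichev_1993}).
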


    The proof of this proposition can be carried out by the scheme from, e.g., \cite[Theorem 3.6 and Remark 3.3]{Samko_Kilbas_Marichev_1993} and \cite[Theorem 2.6]{Diethelm_2010} (see also \cite[Proposition 2.1]{Gomoyunov_2019_FCAA_2}).

    \begin{proposition} \label{proposition_technical_continuity}
        For any $\beta$, $\gamma \in [0, 1)$, there exists a modulus of continuity $\bar{\omega}_{\beta, \gamma}^{(2)} (\cdot)$ such that, for any $t$, $t^\prime \in [0, T]$ with $t^\prime \geq t$, any $k \in \mathbb{N}$, and any $v(\cdot) \in \C([t, t^\prime], \mathbb{R}^k)$, the functions
        \begin{align*}
            \hat{v}(\tau)
            & \doteq (\tau - t)^{\beta \vee \gamma} \int_{t}^{\tau} \frac{v(\xi)}{(\tau - \xi)^\beta (\xi - t)^\gamma} \, \rd \xi, \\
            \hat{v}^\prime(\tau)
            & \doteq (t^\prime - \tau)^{\beta \vee \gamma} \int_{\tau}^{t^\prime} \frac{v(\xi)}{(t^\prime - \xi)^\beta (\xi - \tau)^\gamma} \, \rd \xi
            \quad \forall \tau \in [t, t^\prime]
        \end{align*}
        satisfy the estimate
        \begin{equation*}
            \|\hat{v}(\tau^\prime) - \hat{v}(\tau)\|_{\mathbb{R}^k}
            \vee \|\hat{v}^\prime(\tau^\prime) - \hat{v}^\prime(\tau)\|_{\mathbb{R}^k}
            \leq \|v(\cdot)\|_{\C([t, t^\prime], \mathbb{R}^k)} \bar{\omega}_{\beta, \gamma}^{(2)}(|\tau^\prime - \tau|)
            \ \ \forall \tau, \tau^\prime \in [t, t^\prime].
        \end{equation*}
    \end{proposition}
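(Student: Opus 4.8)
The plan is to reduce the statement to a single model operator and then follow the splitting scheme already used for the one-sided case in Proposition \ref{proposition_technical_continuity_0} (in the spirit of \cite[Theorem 3.6]{Samko_Kilbas_Marichev_1993} and \cite[Theorem 2.6]{Diethelm_2010}). First I would exploit the reflection symmetry relating $\hat{v}$ and $\hat{v}^\prime$: under the change of variable $\xi \mapsto t + t^\prime - \xi$ together with $\tau \mapsto t + t^\prime - \tau$, the integral defining $\hat{v}^\prime(\tau)$ for the pair $(\beta, \gamma)$ is carried exactly into the integral defining $\hat{v}(\cdot)$ for the swapped pair $(\gamma, \beta)$ applied to the reflected function $\tilde{v}(\xi) \doteq v(t + t^\prime - \xi)$, because $\beta \vee \gamma = \gamma \vee \beta$, the map $\tau \mapsto t + t^\prime - \tau$ is an isometry of $[t, t^\prime]$, and $\|\tilde{v}(\cdot)\|_{\C([t, t^\prime], \mathbb{R}^k)} = \|v(\cdot)\|_{\C([t, t^\prime], \mathbb{R}^k)}$. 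Since the assertion is claimed for every $(\beta, \gamma) \in [0, 1)^2$, it therefore suffices to prove the estimate for $\hat{v}$ alone and then take $\bar{\omega}_{\beta, \gamma}^{(2)}(\cdot)$ to be the maximum of the two moduli so obtained.

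Next I would normalize: by translation assume $t = 0$, and by homogeneity assume $\|v(\cdot)\|_{\C} \leq 1$; write $s \doteq t^\prime \leq T$, $q \doteq \beta \vee \gamma$, $p \doteq 1 - \beta \wedge \gamma \in (0, 1]$, and $\hat{v}(\tau) = \tau^q g(\tau)$ with $g(\tau) \doteq \int_0^\tau (\tau - \xi)^{-\beta} \xi^{-\gamma} v(\xi) \, \rd \xi$. For $0 \leq \tau \leq \tau^\prime \leq s$, set $\delta \doteq \tau^\prime - \tau$ and split $\hat{v}(\tau^\prime) - \hat{v}(\tau) = (\tau^{\prime q} - \tau^q) g(\tau^\prime) + \tau^q (g(\tau^\prime) - g(\tau))$. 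The first summand is controlled by $|\tau^{\prime q} - \tau^q| \leq \delta^q$ together with the bound $|g(\tau^\prime)| \leq \tau^{\prime\, 1 - \beta - \gamma} \mathrm{B}(1 - \beta, 1 - \gamma)$ coming from \eqref{technical_beta}; distinguishing the sign of $1 - \beta - \gamma$ and using $\tau^\prime \geq \delta$ when that exponent is negative, one obtains a clean power-of-$\delta$ bound. For the second summand I would write $g(\tau^\prime) - g(\tau) = I_1 + I_2$, where $I_2 \doteq \int_\tau^{\tau^\prime} (\tau^\prime - \xi)^{-\beta} \xi^{-\gamma} v(\xi) \, \rd \xi$ is the contribution of the new interval and $I_1$ is the difference-of-kernels integral over $[0, \tau]$; the tail $I_2$ is immediate from $\xi^{-\gamma} \leq \tau^{-\gamma}$ there, the factor $\tau^{q - \gamma} \leq \max\{1, T^{\beta - \gamma}\}$ absorbing the prefactor.

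The core of the argument, and the step I expect to be the main obstacle, is the uniform estimate of $\tau^q |I_1|$, where $I_1 \leq \int_0^\tau [(\tau - \xi)^{-\beta} - (\tau^\prime - \xi)^{-\beta}] \xi^{-\gamma} \, \rd \xi \geq 0$ by the sign of the bracket. The difficulty is that the two singularities — at $\xi = \tau$ through $(\tau - \xi)^{-\beta}$ and at $\xi = 0$ through $\xi^{-\gamma}$ — interact, and the normalizing power $\tau^q$ is precisely what must cancel the negative powers of $\tau$ generated by the singular integrals. I would handle this by a case split on the size of $\tau$ relative to the increment. When $\tau \leq 2 \delta$, I bound $I_1$ crudely by its first kernel, so that $\tau^q |I_1| \leq \tau^p \mathrm{B}(1 - \beta, 1 - \gamma) \leq (2\delta)^p \mathrm{B}(1 - \beta, 1 - \gamma)$. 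When $\tau > 2 \delta$, I split $[0, \tau]$ at the point $\tau - \delta$, estimating the near-singularity part directly by $(\tau - \xi)^{-\beta}$ and the far part through the mean-value bound $(\tau - \xi)^{-\beta} - (\tau^\prime - \xi)^{-\beta} \leq \beta \delta (\tau - \xi)^{-\beta - 1}$, and further splitting the far part at $\tau / 2$ so that the factors $\xi^{-\gamma}$ and $(\tau - \xi)^{-\beta - 1}$ may each be frozen.

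Tracking the resulting exponents — using $\tau^{q - \gamma} \leq T^{q - \gamma}$ and, on the regime $\tau > 2 \delta$, the bound $\tau^{-\beta \wedge \gamma} \leq (2 \delta)^{-\beta \wedge \gamma}$ — each piece is seen to be bounded by a constant depending only on $\beta$, $\gamma$, $T$ times $\delta^{1 - \beta} + \delta^p$. Collecting the three contributions and invoking the reflection reduction, I obtain the desired universal modulus, for instance $\bar{\omega}_{\beta, \gamma}^{(2)}(\delta) \doteq C (\delta^{1 - \beta} + \delta^{1 - \gamma} + \delta^p + \delta^q)$ with $C = C(\beta, \gamma, T)$, which is independent of $t$, $t^\prime$, $k$, and $v(\cdot)$. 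The dimension $k$ plays no role, since every bound is a norm estimate with $\|v(\cdot)\|_{\C}$ factored out, and the uniformity in $t$, $t^\prime$ is guaranteed because only $\delta = \tau^\prime - \tau$ and powers of $T$ enter the final constants.
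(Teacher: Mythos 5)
Your proof is correct, but it takes a genuinely different route from the paper's. The paper first rewrites the kernel in balanced form, $\frac{1}{(\tau-\xi)^\beta(\xi-t)^\gamma} = \frac{(\tau-\xi)^{\varkappa-\beta}(\xi-t)^{\varkappa-\gamma}}{(\tau-\xi)^\varkappa(\xi-t)^\varkappa}$ with $\varkappa \doteq \beta\vee\gamma$, absorbs the excess powers into the integrand, and then splits the increment $\hat v(\tau')-\hat v(\tau)$ into (i) an increment of the symmetric-exponent operator $(\tau-t)^\varkappa\int_t^\tau(\cdot)\,\rd\xi/\bigl((\tau-\xi)^\varkappa(\xi-t)^\varkappa\bigr)$ applied to one fixed continuous function, which is dispatched by citing the H\"older-type estimate of \cite[Corollary 2.1]{Gomoyunov_2019_FCAA_2}, and (ii) a numerator-difference term controlled by $|a^{\varkappa-\beta}-b^{\varkappa-\beta}|\leq|a-b|^{\varkappa-\beta}$ together with \eqref{technical_beta}. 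You instead keep the kernel as it stands, split off the outer prefactor, $\hat v(\tau')-\hat v(\tau) = (\tau'^{\,q}-\tau^{q})\,g(\tau') + \tau^{q}\bigl(g(\tau')-g(\tau)\bigr)$, and replace the citation by a self-contained multi-scale estimate of the kernel-difference integral (cases $\tau\leq 2\delta$ and $\tau>2\delta$, splitting at $\tau-\delta$ and $\tau/2$, mean-value bound $(\tau-\xi)^{-\beta}-(\tau'-\xi)^{-\beta}\leq\beta\delta(\tau-\xi)^{-\beta-1}$); your reflection step, which turns $\hat v'$ for the pair $(\beta,\gamma)$ into $\hat v$ for the swapped pair $(\gamma,\beta)$, is precisely the change of variables the paper only alludes to, with the parameter swap made explicit. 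What each approach buys: the paper's argument is shorter and treats the two singularities symmetrically by reusing existing machinery; yours is elementary and self-contained and yields an explicit H\"older modulus $C(\delta^{1-\beta}+\delta^{1-\gamma}+\delta^{p}+\delta^{q})$ with visible exponents. One cosmetic caveat: in the degenerate case $\beta=\gamma=0$ (so $q=0$) the term $\delta^{q}\equiv 1$ is not a modulus of continuity; there the prefactor difference $\tau'^{\,q}-\tau^{q}$ vanishes identically, so that term should simply be omitted from the final formula (your estimates already show this), and with that adjustment the stated modulus is valid uniformly in $t$, $t'$, $k$, and $v(\cdot)$.
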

    \begin{proof}
        Let $t$, $t^\prime \in [0, T]$ with $t^\prime \geq t$, $k \in \mathbb{N}$, $v(\cdot) \in \C([t, t^\prime], \mathbb{R}^k)$, and $\tau$, $\tau^\prime \in [t, t^\prime]$ with $\tau^\prime \geq \tau$ be fixed.
        Then, denoting $\varkappa \doteq \beta \vee \gamma$ for brevity, we obtain
        \begin{align*}
            & \|\hat{v}(\tau^\prime) - \hat{v}(\tau)\|_{\mathbb{R}^k} \\*
            & = \biggl\| (\tau^\prime - t)^\varkappa \int_{t}^{\tau^\prime}
            \frac{(\tau^\prime - \xi)^{\varkappa - \beta} (\xi - t)^{\varkappa - \gamma}  v(\xi)}
            {(\tau^\prime - \xi)^\varkappa (\xi - t)^\varkappa} \, \rd \xi \\*
            & \quad - (\tau - t)^\varkappa \int_{t}^{\tau}
            \frac{(\tau - \xi)^{\varkappa - \beta} (\xi - t)^{\varkappa - \gamma} v(\xi)}
            {(\tau - \xi)^\varkappa (\xi - t)^\varkappa} \, \rd \xi \biggr\|_{\mathbb{R}^k} \\
            & \leq \biggl\| (\tau^\prime - t)^\varkappa \int_{t}^{\tau^\prime}
            \frac{(\tau^\prime - \xi)^{\varkappa - \beta} (\xi - t)^{\varkappa - \gamma}  v(\xi)}
            {(\tau^\prime - \xi)^\varkappa (\xi - t)^\varkappa} \, \rd \xi \\*
            & \quad - (\tau - t)^\varkappa \int_{t}^{\tau}
            \frac{(\tau^\prime - \xi)^{\varkappa - \beta} (\xi - t)^{\varkappa - \gamma} v(\xi)}
            {(\tau - \xi)^\varkappa (\xi - t)^\varkappa} \, \rd \xi \biggr\|_{\mathbb{R}^k} \\*
            & \quad + (\tau - t)^\varkappa \biggl\| \int_{t}^{\tau}
            \frac{((\tau^\prime - \xi)^{\varkappa - \beta} - (\tau - \xi)^{\varkappa - \beta})
            (\xi - t)^{\varkappa - \gamma} v(\xi)}
            {(\tau - \xi)^\varkappa (\xi - t)^\varkappa} \, \rd \xi \biggr\|_{\mathbb{R}^k} \\*
            & \doteq \mathscr{A}_1 + \mathscr{A}_2.
        \end{align*}
        Taking \cite[Corollary 2.1]{Gomoyunov_2019_FCAA_2} into account, we get
        \begin{equation*}
            \begin{split}
                \mathscr{A}_1
                & \leq \lambda_\varkappa
                \max_{\xi \in [t, \tau^\prime]} \|(\tau^\prime - \xi)^{\varkappa - \beta} (\xi - t)^{\varkappa - \gamma}  v(\xi)\|_{\mathbb{R}^k}
                (\tau^\prime - \tau)^{1 - \varkappa} \\
                & \leq T^{2 \varkappa - \beta - \gamma} \lambda_\varkappa \|v(\cdot)\|_{\C([t, t^\prime], \mathbb{R}^k)} (\tau^\prime - \tau)^{1 - \varkappa}
            \end{split}
        \end{equation*}
        with a number $\lambda_\varkappa \geq 0$ determined by the number $\varkappa$ only.
        Further, if $\varkappa = \beta$, we have $\mathscr{A}_2 = 0$.
        Otherwise, using \eqref{technical_beta}, we derive
        \begin{equation*}
            \begin{split}
                \mathscr{A}_2
                & \leq (\tau - t)^{2 \varkappa - \gamma} (\tau^\prime - \tau)^{\varkappa - \beta} \|v(\cdot)\|_{\C([t, t^\prime], \mathbb{R}^k)}
                \int_{t}^{\tau} \frac{\rd \xi}{(\tau - \xi)^\varkappa (\xi - t)^\varkappa} \\
                & \leq \mathrm{B}(1 - \varkappa, 1 - \varkappa) T^{1 - \gamma} \|v(\cdot)\|_{\C([t, t^\prime], \mathbb{R}^k)}
                (\tau^\prime - \tau)^{\varkappa - \beta}.
            \end{split}
        \end{equation*}
        The obtained estimates imply the required result for the function $\hat{v}(\cdot)$.
        The result for the function $\hat{v}^\prime(\cdot)$ can be verified in a similar way or by performing a change of variables (see, e.g., \cite[equation (2.19)]{Samko_Kilbas_Marichev_1993}).
        The proposition is proved.
    \end{proof}

    \begin{proposition} \label{proposition_technical_continuity_2}
        For any $\beta \in [0, 1 / 2)$, there exists a modulus of continuity $\bar{\omega}_\beta^{(3)}(\cdot)$ such that, for any $k \in \mathbb{N}$ and any $v(\cdot) \in \C([0, T], \mathbb{R}^k)$, the function
        \begin{equation*}
            \hat{v} (\tau, \xi)
            \doteq \int_{\tau \vee \xi}^{T} \frac{v(\eta)}{(\eta - \tau)^\beta (\eta - \xi)^\beta} \, \rd \eta
            \quad \forall \tau, \xi \in [0, T]
        \end{equation*}
        satisfies the estimate
        \begin{equation*}
            \|\hat{v}(\tau^\prime, \xi^\prime) - \hat{v}(\tau, \xi)\|_{\mathbb{R}^k}
            \leq \|v(\cdot)\|_{\C([0, T], \mathbb{R}^k)} \bar{\omega}_\beta^{(3)}(|\tau^\prime - \tau| + |\xi^\prime - \xi|)
            \quad \forall \tau, \tau^\prime, \xi, \xi^\prime \in [0, T].
        \end{equation*}
    \end{proposition}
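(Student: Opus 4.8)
The plan is to reduce everything to a one-dimensional scalar estimate and to exploit the hypothesis $\beta < 1/2$, which is precisely what makes the doubly-singular kernel $(\eta-\tau)^{-\beta}(\eta-\xi)^{-\beta}$ integrable up to the diagonal $\eta=\tau=\xi$. Fix $k$, $v(\cdot) \in \C([0,T],\mathbb{R}^k)$, and the four points $\tau,\tau',\xi,\xi' \in [0,T]$, and write $s \doteq \tau \vee \xi$ and $s' \doteq \tau' \vee \xi'$. By the symmetry of the claim under interchanging the primed and unprimed pairs, I may assume $s' \geq s$. I would then split
\begin{equation*}
\hat v(\tau',\xi') - \hat v(\tau,\xi)
= \int_{s'}^{T} v(\eta) \left( \frac{1}{(\eta-\tau')^\beta (\eta-\xi')^\beta} - \frac{1}{(\eta-\tau)^\beta (\eta-\xi)^\beta} \right) d\eta
- \int_{s}^{s'} \frac{v(\eta)}{(\eta-\tau)^\beta (\eta-\xi)^\beta}\, d\eta,
\end{equation*}
so that the two singular lower limits are treated separately: a change-of-limits term on $[s,s']$ and a change-of-kernel term on $[s',T]$.

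For the change-of-limits term, since $\eta \geq s \geq \tau$ and $\eta \geq s \geq \xi$ give $(\eta-\tau)(\eta-\xi) \geq (\eta-s)^2$, I bound it by $\|v(\cdot)\|_{\C([0,T],\mathbb{R}^k)} \int_{s}^{s'} (\eta-s)^{-2\beta}\,d\eta = \|v(\cdot)\|_{\C([0,T],\mathbb{R}^k)}\, (s'-s)^{1-2\beta}/(1-2\beta)$, which is finite because $2\beta<1$; moreover $s'-s = |\tau'\vee\xi' - \tau\vee\xi| \leq |\tau'-\tau| + |\xi'-\xi|$.

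The main work is the change-of-kernel term. I would decompose the kernel difference additively,
\begin{equation*}
\frac{1}{(\eta-\tau')^\beta (\eta-\xi')^\beta} - \frac{1}{(\eta-\tau)^\beta (\eta-\xi)^\beta}
= \frac{1}{(\eta-\tau')^\beta} \Delta_\xi(\eta) + \frac{1}{(\eta-\xi)^\beta} \Delta_\tau(\eta),
\end{equation*}
where $\Delta_\xi(\eta) \doteq (\eta-\xi')^{-\beta} - (\eta-\xi)^{-\beta}$ and $\Delta_\tau(\eta) \doteq (\eta-\tau')^{-\beta} - (\eta-\tau)^{-\beta}$, thereby isolating a single shifted variable in each summand. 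In each summand the non-differenced factor is controlled using $\eta \geq s' \geq \tau',\xi$, so that $(\eta-\tau')^{-\beta} \vee (\eta-\xi)^{-\beta} \leq (\eta-s')^{-\beta}$. After the substitution $u = \eta - s'$ and writing the remaining arguments as $u$ plus nonnegative constants (e.g.\ $\eta-\xi = u + (s'-\xi)$ with $s'-\xi \geq 0$), the monotonicity of $u \mapsto u^{-\beta} - (u+a)^{-\beta}$ lets me dominate each summand's integral by the scalar quantity $\int_0^{\infty} \big( u^{-\beta} - (u+a)^{-\beta} \big) u^{-\beta}\, du$ with $a = |\xi'-\xi|$ (resp.\ $a = |\tau'-\tau|$). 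The scaling $u = aw$ evaluates this to $C_\beta\, a^{1-2\beta}$, where $C_\beta \doteq \int_0^\infty (w^{-\beta} - (w+1)^{-\beta}) w^{-\beta}\,dw < \infty$ converges precisely because the integrand behaves like $w^{-2\beta}$ at the origin (integrable since $2\beta<1$) and like $w^{-1-2\beta}$ at infinity.

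Collecting the three contributions, I would set $\bar\omega_\beta^{(3)}(\delta) \doteq \big( 2 C_\beta + 1/(1-2\beta) \big)\, \delta^{1-2\beta}$, which is a genuine modulus of continuity since $1 - 2\beta > 0$, and use $|\tau'-\tau|^{1-2\beta} + |\xi'-\xi|^{1-2\beta} \leq 2(|\tau'-\tau|+|\xi'-\xi|)^{1-2\beta}$ to obtain the stated estimate. The hardest point will be the change-of-kernel term: one must decouple the two singularities without losing integrability, and the key device is to absorb the non-differenced factor into $(\eta-s')^{-\beta}$ and then invoke monotonicity to reduce to the explicit scalar integral $C_\beta a^{1-2\beta}$; this is exactly where the restriction $\beta<1/2$ is indispensable, and it is analogous in spirit to the one-variable continuity estimates of Propositions~\ref{proposition_technical_continuity_0} and~\ref{proposition_technical_continuity}.
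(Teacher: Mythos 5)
Your proof is correct, and it reaches the conclusion by a genuinely different device than the paper at the key step. The outer structure coincides: you both assume $\tau'\vee\xi'\ge\tau\vee\xi$ without loss of generality, split off the integral over $[\tau\vee\xi,\,\tau'\vee\xi']$ (bounded via $(\eta-\tau)(\eta-\xi)\ge(\eta-\tau\vee\xi)^2$ by $(\tau'\vee\xi'-\tau\vee\xi)^{1-2\beta}/(1-2\beta)$), and telescope the kernel difference on $[\tau'\vee\xi',T]$ so that only one of the two variables moves at a time. The difference lies in how the resulting one-variable kernel increment is estimated. The paper introduces the auxiliary function $p(\tau,\xi)\doteq\int_\tau^T(\eta-\tau)^{-\beta}(\eta-\xi)^{-\beta}\,\rd\eta$ on $\Omega$, proves its continuity by a change of variables together with the Lebesgue dominated convergence theorem, extracts a (non-explicit) modulus $\omega_p(\cdot)$ from uniform continuity on the compact set $\Omega$, and uses the fixed sign of $(\eta-\xi')^{-\beta}-(\eta-\xi)^{-\beta}$ to convert the integral of the absolute value into a difference of two values of $p$. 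You instead dominate the increment pointwise via the monotonicity of $x\mapsto x^{-\beta}-(x+a)^{-\beta}$, absorb the undifferenced factor into $(\eta-\tau'\vee\xi')^{-\beta}$, and evaluate the resulting scalar integral by the scaling $u=aw$, obtaining the explicit bound $C_\beta a^{1-2\beta}$. All of your intermediate steps check out: the telescoping identity, the inequalities $\tau'\le\tau'\vee\xi'$ and $\xi\le\tau\vee\xi\le\tau'\vee\xi'$ needed for the absorption, the finiteness of $C_\beta$ (integrand $\sim w^{-2\beta}$ at $0$ and $\sim\beta w^{-1-2\beta}$ at infinity, both integrable precisely because $\beta<1/2$), and $|\tau'\vee\xi'-\tau\vee\xi|\le|\tau'-\tau|+|\xi'-\xi|$. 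What your route buys is a quantitative result: an explicit H\"older-type modulus $\bar\omega_\beta^{(3)}(\delta)=\bigl(2C_\beta+1/(1-2\beta)\bigr)\delta^{1-2\beta}$, strictly more informative than the paper's modulus, which inherits the non-constructive character of the compactness argument. What the paper's route buys is robustness: it never computes the kernel integral, so the same soft argument would survive replacing the product kernel by one for which the self-similar scaling trick is unavailable.
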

    \begin{proof}
        Let us introduce the auxiliary function
        \begin{equation*}
            p(\tau, \xi)
            \doteq \int_{\tau}^{T} \frac{\rd \eta}{(\eta - \tau)^\beta (\eta - \xi)^\beta}
            \quad \forall (\tau, \xi) \in \Omega,
        \end{equation*}
        where the set $\Omega$ is taken from \eqref{Omega}, and show that this function is continuous.
        Let a point $(\tau_0, \xi_0) \in \Omega$ and a sequence $\{(\tau_i, \xi_i)\}_{i \in \mathbb{N}} \subset \Omega$ be such that $(\tau_i, \xi_i) \to (\tau_0, \xi_0)$ as $i \to \infty$.
        We first consider the case when $\tau_0 < T$, and, then, we can assume that $\tau_i < T$ for all $i \in \mathbb{N}$.
        By performing a change of variables, we get
        \begin{equation*}
            p(\tau_i, \xi_i)
            = (T - \tau_i)^{1 - 2 \beta} \int_{0}^{1} \frac{\rd \zeta}{\zeta^\beta (\frac{\tau_i - \xi_i}{T - \tau_i} + \zeta)^\beta}
            \quad \forall i \in \mathbb{N} \cup \{0\}.
        \end{equation*}
        It is clear that $(T - \tau_i)^{1 - 2 \beta} \to (T - \tau_0)^{1 - 2 \beta}$ as $i \to \infty$.
        In addition, it can be verified by applying the Lebesgue dominated convergence theorem that
        \begin{equation*}
            \int_{0}^{1} \frac{\rd \zeta}{\zeta^\beta \bigl( \frac{\tau_i - \xi_i}{T - \tau_i} + \zeta \bigr)^\beta}
            \to \int_{0}^{1} \frac{\rd \zeta}{\zeta^\beta \bigl( \frac{\tau_0 - \xi_0}{T - \tau_0} + \zeta \bigr)^\beta}
            \quad \text{as } i \to \infty.
        \end{equation*}
        Hence, we conclude that $p(\tau_i, \xi_i) \to p(\tau_0, \xi_0)$ as $i \to \infty$.
        Now, let us suppose that $\tau_0 = T$.
        In this case, we have $p(\tau_0, \xi_0) = 0$ by definition.
        For every $i \in \mathbb{N}$, recalling that $\xi_i \leq \tau_i$, we derive
        \begin{equation*}
            0
            \leq p(\tau_i, \xi_i)
            \leq \int_{\tau_i}^{T} \frac{\rd \eta}{(\eta - \tau_i)^{2 \beta}}
            = \frac{(T - \tau_i)^{1 - 2 \beta}}{1 - 2 \beta},
        \end{equation*}
        and, thus, $p(\tau_i, \xi_i) \to 0$ as $i \to \infty$, which completes the proof of continuity of the function $p(\cdot, \cdot)$.
        Let $\omega_p(\cdot)$ be the modulus of continuity of this function.

        Let $k \in \mathbb{N}$, $v(\cdot) \in \C([0, T], \mathbb{R}^k)$, and $\tau$, $\tau^\prime$, $\xi$, $\xi^\prime \in [0, T]$ be fixed.
        Denote
        \begin{equation*}
            \mu_v
            \doteq \|v(\cdot)\|_{\C([0, T], \mathbb{R}^k)},
            \quad \vartheta_\vee
            \doteq \tau \vee \xi,
            \quad \vartheta_\wedge
            \doteq \tau \wedge \xi,
            \quad \vartheta_\vee^\prime
            \doteq \tau^\prime \vee \xi^\prime,
            \quad \vartheta_\wedge^\prime
            \doteq \tau^\prime \wedge \xi^\prime
        \end{equation*}
        for brevity and suppose that $\vartheta_\vee^\prime \geq \vartheta_\vee$ for definiteness.
        We have
        \begin{equation*}
            \begin{split}
                & \|\hat{v}(\tau^\prime, \xi^\prime) - \hat{v}(\tau, \xi)\|_{\mathbb{R}^k} \\
                & = \biggl\| \int_{\vartheta_\vee^\prime}^{T} \frac{v(\eta)}{(\eta - \vartheta_\vee^\prime)^\beta (\eta - \vartheta_\wedge^\prime)^\beta} \, \rd \eta
                - \int_{\vartheta_\vee}^{T} \frac{v(\eta)}{(\eta - \vartheta_\vee)^\beta (\eta - \vartheta_\wedge)^\beta} \, \rd \eta \biggr\|_{\mathbb{R}^k} \\
                & \leq \biggl\| \int_{\vartheta_\vee^\prime}^{T} \frac{v(\eta)}{(\eta - \vartheta_\vee^\prime)^\beta (\eta - \vartheta_\wedge^\prime)^\beta} \, \rd \eta
                - \int_{\vartheta_\vee^\prime}^{T} \frac{v(\eta)}{(\eta - \vartheta_\vee^\prime)^\beta (\eta - \vartheta_\wedge)^\beta} \, \rd \eta \biggr\|_{\mathbb{R}^k} \\
                & \quad + \biggl\| \int_{\vartheta_\vee^\prime}^{T} \frac{v(\eta)}{(\eta - \vartheta_\vee^\prime)^\beta (\eta - \vartheta_\wedge)^\beta} \, \rd \eta
                - \int_{\vartheta_\vee^\prime}^{T} \frac{v(\eta)}{(\eta - \vartheta_\vee)^\beta (\eta - \vartheta_\wedge)^\beta} \, \rd \eta \biggr\|_{\mathbb{R}^k} \\
                & \quad + \biggl\| \int_{\vartheta_\vee}^{\vartheta_\vee^\prime}
                \frac{v(\eta)}{(\eta - \vartheta_\vee)^\beta (\eta - \vartheta_\wedge)^\beta} \, \rd \eta \biggr\|_{\mathbb{R}^k} \\
                & \doteq \mathscr{A}_1 + \mathscr{A}_2 + \mathscr{A}_3.
            \end{split}
        \end{equation*}
        We derive
        \begin{equation*}
            \begin{split}
                \mathscr{A}_1
                & \leq \mu_v
                \int_{\vartheta_\vee^\prime}^{T} \frac{1}{(\eta - \vartheta_\vee^\prime)^\beta} \biggl| \frac{1}{(\eta - \vartheta_\wedge^\prime)^\beta}
                - \frac{1}{(\eta - \vartheta_\wedge)^\beta} \biggr| \, \rd \eta \\
                & = \mu_v
                \biggl| \int_{\vartheta_\vee^\prime}^{T} \frac{\rd \eta}{(\eta - \vartheta_\vee^\prime)^\beta (\eta - \vartheta_\wedge^\prime)^\beta}
                - \int_{\vartheta_\vee^\prime}^{T} \frac{\rd \eta}{(\eta - \vartheta_\vee^\prime)^\beta (\eta - \vartheta_\wedge)^\beta} \biggr| \\
                & \leq \mu_v \omega_p(|\vartheta_\wedge^\prime - \vartheta_\wedge|)
            \end{split}
        \end{equation*}
        and
        \begin{equation*}
            \begin{split}
                \mathscr{A}_2
                & \leq \mu_v \int_{\vartheta_\vee^\prime}^{T} \frac{1}{(\eta - \vartheta_\wedge)^\beta}
                \biggl| \frac{1}{(\eta - \vartheta_\vee^\prime)^\beta} - \frac{1}{(\eta - \vartheta_\vee)^\beta} \biggr| \, \rd \eta \\
                & = \mu_v \biggl( \int_{\vartheta_\vee^\prime}^{T} \frac{\rd \eta}{(\eta - \vartheta_\vee^\prime)^\beta (\eta - \vartheta_\wedge)^\beta}
                - \int_{\vartheta_\vee^\prime}^{T} \frac{\rd \eta}{(\eta - \vartheta_\vee)^\beta (\eta - \vartheta_\wedge)^\beta} \biggr) \\
                & \leq \mu_v \biggl( \omega_p(\vartheta_\vee^\prime - \vartheta_\vee)
                + \int_{\vartheta_\vee}^{\vartheta_\vee^\prime} \frac{\rd \eta}{(\eta - \vartheta_\vee)^{2 \beta}} \biggr) \\
                & = \mu_v \biggl( \omega_p(\vartheta_\vee^\prime - \vartheta_\vee)
                + \frac{(\vartheta_\vee^\prime - \vartheta_\vee)^{1 - 2 \beta}}{1 - 2 \beta} \biggr).
            \end{split}
        \end{equation*}
        Similarly, we get
        \begin{equation*}
            \mathscr{A}_3
            \leq \frac{\mu_v (\vartheta_\vee^\prime - \vartheta_\vee)^{1 - 2 \beta}}{1 - 2 \beta}.
        \end{equation*}
        Thus, noting that $(\vartheta_\vee^\prime - \vartheta_\vee) \vee  | \vartheta_\wedge^\prime - \vartheta_\wedge | \leq |\tau^\prime - \tau | + | \xi^\prime - \xi|$, the result follows.
        The proposition is proved.
     \end{proof}

     \begin{remark} \label{remark_appendix}
        Propositions \ref{proposition_technical_continuity_0}, \ref{proposition_technical_continuity}, and \ref{proposition_technical_continuity_2} remain valid with the same moduli of continuity $\bar{\omega}_\beta^{(1)}(\cdot)$, $\bar{\omega}_{\beta, \gamma}^{(2)}(\cdot)$, and $\bar{\omega}_\beta^{(3)}(\cdot)$ if, instead of functions with values in $\mathbb{R}^k$, we take functions with values in $\mathbb{R}^{k \times \hat{k}}$ for some number $\hat{k} \in \mathbb{N}$.
    \end{remark}

\medskip

Received xxxx 20xx; revised xxxx 20xx; early access xxxx 20xx.

\medskip

\end{document}